\crefname{hypothesis}{Hypothesis}{Hypotheses}
\title{Homogeneous Control Systems on Cones and Nonovershooting Finite-time Stabilizers}
\author{Andrey Polyakov\thanks{University of Lille, Inria, CNRS,  France
  (\email{andrey.polyakov@inria.fr}).}
\and Miroslav Krstic \thanks{University of California, San Diego, USA,
  (\email{krstic@ucsd.edu}).}}
\definecolor{green2}{rgb}{0,0.62,0.17}
\definecolor{orange}{rgb}{1,0.5,0.0}
\definecolor{b}{rgb}{0,0,1}
\definecolor{r}{rgb}{1,0,0}
\newtheorem{assumption}{Assumption}
\newcommand{\R}{\mathbb{R}}
\newcommand{\D}{\mathcal{D}}
\newcommand{\sign}{\mathrm{sign}}
\newcommand{\interior}{\mathfrak{int}\,}
\newcommand{\zero}{\mathbf{0}}
\newcommand{\dn}{\mathbf{d}}
\begin{document}
	\maketitle

	\begin{abstract}
		A nonovershooting finite-time control design for linear multi-input system is proposed by upgrading a linear (asymptotic) nonovershooting stabilizer to a  homogeneous one. Robustness of the safety and stability  properties is analyzed using 
		the concept of Input-to-State Stability (ISS) on invariant sets and Input-to-State Safety (ISSf).  Theoretical results are  illustrated on numerical examples. 
	\end{abstract}

	\section{Introduction}
	A  tracking problem under certain state/output constrains  can be solved using the so-called nonovershooting control \cite{KrsticBement2006:TAC} which, cast in the framework of control barrier functions \cite{WielandAllgower2007:NOLCOS}, \cite{Ames_etal2017:TAC}, \cite{Jankovic2018:Aut},
	can be also employed for a ''safety filter'' design, to override  a potentially unsafe nominal controller  \cite{Abel_etal2022:ACC,PolyakovKrstic2023:TAC}.
	Linear and nonlinear nonovershooting controllers have been designed for both linear \cite{PhillipsSeborg1988:IJC}, \cite{ElKhoury_etal1993:Aut}, \cite{DarbhaBhattacharrya2003:TAC} and nonlinear systems \cite{KrsticBement2006:TAC}, \cite{LindemannDimarogonas2019:IEEECSL}, \cite{Garg_etal2022:IEEE_CSL}.
	The safety filter synthesis based on control barrier functions (CBF) has been extensively used in  control applications such as automotive systems \cite{Ames_etal2015:CDC}, \cite{Rahman_etal2021:ACC} and multi-agent robotics \cite{Wang_etal2017:TR}, \cite{SantilloJankovic2021:ACC}.   The barrier functions  characterize positively invariant sets of safe control systems. 
	
	The positive invariance of closed sets  for systems having unique solutions was first characterized in \cite{Nagumno1942:PPMSJ}.  Control systems design (analysis) on invariant sets  usually deals with compacts including a set-point (stable equilibrium) in  its interior (as, for example, in  \cite{Blanchini1999:Aut}).  The ellipsoidal and polyhedral invariant sets are the most popular in this context \cite{BlanchiniMiani2016:Book}, \cite{Poznyak_etal2014:Book}. 
	For a nonovershooting stabilizer design, the desired set-point belongs to the boundary of  the safe set.   For linear systems,  the corresponding invariant sets are linear positive cones  and the nonovershooting property can be established by means of a transformation of the original system to a positive linear system \cite{FarinaRinaldi2000:Book}, \cite{LeenheerAeyels2001:SCL}, \cite{Rantzer2015:EJC} as, for example, in \cite{PolyakovKrstic2023:TAC}.
	A nonlinear nonovershooting stabilizer design may be required even for linear control plant if, for example, some time constraints have to be fulfilled.  Usually, the corresponding invariant set is not a linear cone and the related analysis is more complicated.  This paper deals with a class of homogeneous systems, which admits a nonovershooting analysis  similar to linear one. 
	
	Homogeneous system is  a  nonlinear system \cite{Zubov1958:IVM}, \cite{Kawski1991:ACDS}, \cite{Rosier1992:SCL}, \cite{Grune2000:SIAM_JCO}, which, on the one hand, have many properties typical for linear systems such as equivalence of local and global results \cite{BhatBernstein2005:MCSS} or equivalence of asymptotic stability and Input-to-State Stability (ISS) with respect to homogeneously (resp., linearly)  involved perturbations \cite{Ryan1995:SCL}, \cite{Hong2001:Aut}.  
	On the other hand, they may demonstrate faster convergence \cite{BhatBernstein2005:MCSS}, better robustness \cite{Andrieu_etal2008:SIAM_JCO} and smaller overshoots \cite[Chapter 1]{Polyakov2020:Book}, \cite{PolyakovKrstic2023:TAC}. A  
	convergence rate of any stable homogeneous system is characterized by its homogeneity degree \cite{Nakamura_etal2002:SICE}.
	Homogeneity is a dilation symmetry widely studied in the group theory \cite{FischerRuzhansky2016:Book} and useful for control systems design \cite{Polyakov2020:Book}. Similarly to linear systems, invariant sets of homogeneous systems may be of a conic type (in a generalized sense).  Therefore, analysis and design of nonovershooting homogeneous control systems should be developed on some homogeneous cones. This paper develops some tools for stability and robustness analysis of homogeneous systems on cones and provides a simple  scheme of a nonovershooting  finite-time stabilizer design for linear time-invariant multi-input plants.

	The main contributions of the paper are as follows:
	    the homogeneous Lyapunov function theorem \cite{Rosier1992:SCL} is refined for homogeneous systems on cones;
	    the criterion of (robust) positive invariance \cite{Blanchini1999:Aut}, \cite{Aubin1991:Book} is adapted to homogeneous cones;
	    the homogeneous ISS theorem \cite{Ryan1995:SCL}, \cite{Andrieu_etal2008:SIAM_JCO} is extended to systems on cones;
	    an algebraic condition of ISSf of homogeneous systems is obtained;
	    an algorithm of non-overshooting finite-time stabilizer design is proposed for linear multi-input systems with linear conic constraints; 
	    ISS and ISSf of the obtained homogeneous non-overshooting control is proven.
 The paper is organized as follows. First, the problem statement and notions of interest are discussed. Next, some preliminaries about homogeneous systems are presented. After that, the issues of stability and robustness analysis of homogeneous systems on cones are studied and the nonovershooting homogeneous control is designed for linear plant. Finally, numerical examples and concluding remarks are given.

	\textit{Notation}.
		$\R$ is the field of reals, $\R_+=\{x\in \R: x\geq 0\}$ and  $\R^{n}$ is the Euclidean space of column real vectors;
		$\|\cdot\|$ denotes a norm in $\R^n$ and $|\cdot|$ is the Euclidean norm in $\R^n$; we also use  $\|\cdot\|_{\R^k}$ in order to highlight the dimension of the Euclidean space; 
		 $e_i=(0,\ldots,0,1,0,\ldots,0)^{\top}$ is the unit vector of the Euclidean basis in $\R^n$;
		$\partial \Omega$ denotes a boundary of $\Omega\subset \R^n$ and $\interior \Omega$ is the interior of $\Omega$;
		$C(\Omega,\R^m)$ denotes a set of continuous mappings $\Omega\subset \R^n \mapsto \R^m$, where $\Omega$ is a connected set with non-empty interior;
		$C^1(\Omega,\R^m)$ is a subset of mappings  $C(\Omega,\R^m)$, which are continuously differentiable on the interior of $\Omega$ such that all partial derivatives have a continuous prolongation  to $\partial \Omega\cap \Omega$; we write shortly $C(\Omega)$ and $C^1(\Omega)$ if a context is clear or a dimension of the codomain is not important;
		$L^{\infty}(\R,\R^k)$ is a set of uniformly essentially bounded functions and  $\|q\|_{L^{\infty}(t_0,t_1)}=\mathrm{ess}\sup_{t\in(t_0,t_1)} |q(t)|;$ 
		$\mathcal{K}$ denotes the class of continuous strictly increasing functions $\sigma:[0,+\infty)\mapsto [0,+\infty)$ such that $\sigma(0)=0$; 
		a  function $\sigma\in \mathcal{K}$ is of the class $\mathcal{K}_{\infty}$
		if $\sigma(s)\to+\infty$ as $s\to +\infty$;
		$\mathcal{KL}$ denotes the class of continuous  functions $\beta:[0,+\infty)\times [0,+\infty)\mapsto [0,+\infty)$ such that 
		$\beta(\cdot,t)$ is of class $\mathcal{K}$ for any $t\geq 0$ and the function $\beta(r,\cdot):[0,+\infty)\mapsto [0,+\infty]$ is decreasing such that $\beta(r,t)\to 0$ as $t\to +\infty$, for any $r\in [0,+\infty)$;
		 for a symmetric matrix $P=P^{\top}\in \R^{n\times n}$, the order relation $P\succ0$ (resp., $P\prec 0$) means that the matrix $P$ is positive (resp., negative) definite.
	\section{Problem Statement}
	Let us consider the system
	\begin{equation}\label{eq:p_hom_system}
		\dot x=f(x,q), \quad t>t_0,\quad x(t_0)= x_0,
	\end{equation}
	where $x(t)\in \R^n$ is the system state,  $q\in L^{\infty}(\R,\R^k)$ is the exogenous input, 
	and $f\in C(\R^{n}\times \R^k,\R^n)$.
	We consider the system \eqref{eq:p_hom_system} on  a closed set  
	\begin{equation}\label{eq:inv_set}
		\Omega=\{x\in \R^n\;:\; \phi_i(x)\geq 0,\;i=1,\ldots,p \},
	\end{equation}	
	with non-empty interior,
	where $\phi_i\in C(\R^n, \R)\cap C^1(\R^n\backslash\{\zero\}, \R)$, $i=1,\ldots,p$, and study an asymptotic (or a finite-time)  stability of the set 
	\begin{equation}\label{eq:Omega_0}
		\Omega_{0}=\{x\in \R^n: \phi_i(x)=0,\;i=1,2,\ldots,p\}
	\end{equation}  
	as well as  Input-to-State Stability (ISS) of the system \eqref{eq:p_hom_system} on $\Omega$.
	In practice, the set $\Omega$ may define a safe set  of the system  \eqref{eq:p_hom_system}. The functions $\phi_i$ are called the barrier functions in this case  \cite{Ames_etal2017:TAC}, \cite{Jankovic2018:Aut}.
	For example, if  the system \eqref{eq:p_hom_system} is globally asymptotically stable and  the set $\Omega$ with $\phi_1(x)=x_1,p=1$ is  positively invariant\footnote{The set $\Omega$ is positively invariant for the system \eqref{eq:p_hom_system} if the following implication $x(t^*)\!\in\! \overline{\Omega} \Rightarrow x(t)\!\in\! \Omega,\forall t\!>\!t^*$ takes a place  for any solution of the system \eqref{eq:p_hom_system}.} for the system \eqref{eq:p_hom_system}, then, obviously, all trajectories of the system initialized in $\Omega$ converge to $\Omega_0$ without overshoot in the first coordinate.

	Below we assume that the set $\Omega$ has a topological characterization of  a positive (in a generalized sense)  cone. Recall that the (conventional) positive cone in a vector space  is a set $\Xi$ such that  $\lambda x\in \Xi$,  $\forall \lambda>0$, $\forall x\in \Xi$. The positive cone $\Xi$  is  linear if $x+y\in \Xi, \forall x,y\in \Xi$. 
	Notice that the multiplication of a vector $x$ by a positive scalar $\lambda>0$ is the standard  dilation in the vector space.  The set $\Omega$ defines a standard positive cone in $\R^n$ if  the functions $\phi_i$ are standard homogeneous $\phi_i(\lambda x)=\lambda^{\nu_i} \phi_i(x),\forall \lambda>0,\forall x\in \R^n$, where $\nu_i>0$ is the so-called homogeneity degree. To design  a generalized positive cone in $\R^n$, a generalized dilation \cite{Zubov1958:IVM}, \cite{Khomenuk1961:IVM}, \cite{Rosier1992:SCL} can be utilized.  Topological characterization of generalized dilations in Frech\'et and Euclidean spaces can be found in \cite{Husch1970:Math_Ann} and in \cite{Kawski1991:ACDS}, respectively. In this paper, we deal only with a cone $\Omega$ induced by  means of the so-called linear dilation in $\R^n$  \cite{Polyakov2018:RNC}, \cite{FischerRuzhansky2016:Book}. The corresponding cone is usually nonlinear in $\R^n$, but it may become linear in a vector space homeomorphic to $\R^n$ (see Section \ref{sec:pre_hom}). The system \eqref{eq:p_hom_system} is assumed to be  generalized homogeneous (i.e., symmetric with respect to a generalized dilation) as well. 
	
	\begin{definition}
		The system \eqref{eq:p_hom_system}  is said to be
		uniformly asymptotically stable  on $\Omega$ if 
		there exist $\beta_i\in \mathcal{KL}$  such that
		\begin{equation}\label{eq:GAS}
			0\leq \phi_i(x(t)) \leq \beta_i(|x_0|,t-t_0),\quad 
		\end{equation}
		for all $t\geq t_0$, $x_0\in \Omega$ and $i=1,\ldots,p$.	
	\end{definition}
	This definition introduces  the asymptotic stability of the set $\Omega_0$ belonging to the boundary of  the invariant set $\Omega$. Such a requirement is typical for the design of nonovershooting stabilizers and safety filters \cite{KrsticBement2006:TAC}, \cite{WielandAllgower2007:NOLCOS}, \cite{Abel_etal2022:ACC}, \cite{PolyakovKrstic2023:TAC}. \textit{Notice that if $\Omega$ is a homogeneous cone and the set $\Omega_0$ is bounded then $\Omega_0=\{\zero\}$} (see Section \ref{sec:pre_hom} for more details). We study the robustness of the stability property of the system \eqref{eq:p_hom_system} on the cone $\Omega$ in the sense of the following definition inspired by \cite{Sontag1989:TAC},  \cite{Magni_etal2006:TAC}.
	\begin{definition}
		The system \eqref{eq:p_hom_system} is said to be 
		ISS on $\Omega$ if 
		there exist $\beta_i\in \mathcal{KL}$  and $\gamma_i\in \mathcal{K}$ such that
		\begin{equation}\label{eq:ISS_Omega}
			0\leq \phi_i(x(t)) \leq \beta_i(|x_0|,t-t_0)+\gamma_i(\|q\|_{L^{\infty}_{(t_0,t)}}), \quad
		\end{equation}
		for all $t\!\geq\! t_0$, $x_0\!\in\! \Omega$, $q\!\in\! L^{\infty}(\R,\R^k)$ and $i\!=\!1,\ldots,p$.
	\end{definition}
 
	Our first aim is to obtain a characterization of  ISS on  $\Omega$ for a class of (generalized) homogeneous systems. 
	The condition \eqref{eq:ISS_Omega}  looks similar to input-to-output stability (IOS) (see, e.g.,  \cite{Sontag1997:ECC}). In the general case, IOS is neither necessary nor sufficient for ISS on $\Omega$. 
	However, the ISS on $\Omega$ may be equivalent to the regional ISS \cite{Magni_etal2006:TAC} provided that $\Omega$ is a compact and $x\to \zero \Leftrightarrow \phi_i(x)\to \zero,\forall i=1,\ldots,p$. The left  inequality in  \eqref{eq:ISS_Omega} simply means the set $\Omega$ is positively invariant for the system \eqref{eq:p_hom_system} independently of $q\in L^{\infty}$. 
	 This condition may be very restrictive in many practical cases. The concept of the Input-to-State Safety  \cite{KolathayaAmes2019:CSL} relaxes the invariance of $\Omega$ as follows.
		
		\begin{definition}\label{def:ISSf}
			The system \eqref{eq:p_hom_system} is said to be 
			Input-to-State Safe (ISSf) on $\Omega$ if 
			there exist $\gamma_i\in \mathcal{K}$ such that
			\begin{equation}\label{eq:ISSF}
				-\gamma_i\left(\|q\|_{L^{\infty}_{(t_0,t)}}\right)\leq \phi_i(x(t))
			\end{equation}
			for all $t\!\geq\! t_0$, $x_0\!\in\! \Omega$, $q\!\in\! L^{\infty}(\R,\R^k)$ and $i\!=\!1,\ldots,p$.
		\end{definition}
		
  The inequality \eqref{eq:ISSF} simply means that the overshoot of the perturbed system must depend continuously on the magnitude of the perturbation. For robustness analysis of nonovershooting stabilizers, ISS on $\Omega$ and ISSf on $\Omega$ are merged as follows.
  
		\begin{definition}
			The system \eqref{eq:p_hom_system} is said to be 
			Input-to-State Safe and Stable (ISSfS) on $\Omega$ if 
			there exist $\beta_i\in \mathcal{KL}$  and $\gamma_i\in \mathcal{K}$ such that
			\begin{equation}\label{eq:ISSS}
				-\gamma_i(\|q\|_{L^{\infty}_{(t_0,t)}})\leq \phi_i(x(t)) \leq \beta_i(|x_0|,t-t_0)+\gamma_i(\|q\|_{L^{\infty}_{(t_0,t)}}), \quad
			\end{equation}
			for all $t\!\geq\! t_0$, $x_0\!\in\! \Omega$, $q\!\in\! L^{\infty}(\R,\R^k)$ and $i\!=\!1,\ldots,p$.
		\end{definition}
  
		Our second aim  is to characterize the robustness of both safety and stability properties of the system \eqref{eq:p_hom_system} on the cone $\Omega$ in the sense of the latter definition.
		
		The final goal of the paper is to design
		a  nonovershooting finite-time stabilizer for a nonlinear generalized homogeneous system using  its unperturbed linear model,
\begin{equation*}
\dot x=f(x,\zero)=Ax+Bu(x),
\end{equation*}
 where $x$ is as before, $A\in \R^{n\times n}$, $B\in \R^{n\times m}$ and  $u:\R^{n}\mapsto \R^{m}$ is a nonovershooting finite-time stabilizer to be designed for a certain class of generalized homogeneous cones $\Omega$ defined below.  	The robustness (with respect to additive perturbations and measurement noises)  of stability and safety properties of the closed-loop system is going to be studied in the sense of  the above definitions.

	\section{Generalized Homogeneity in $\R^n$}\label{sec:pre_hom}
	\subsection{Linear dilations}
	
	Let us recall  that \textit{a family of  operators} $\dn(s):\R^n\mapsto \R^n$ with $s\in \R$ is  a \textit{group} if
	$\dn(0)x\!=\!x$, $\dn(s)\!\circ\! \dn(t) x\!=\!\dn(s\!+\!t)x$, $\forall x\!\in\!\R^n, \forall s,t\!\in\!\R$.
	A \textit{group} $\dn$ is 
		a) \textit{continuous} if the mapping $s\mapsto \dn(s)x$ is continuous,  $\forall x\!\in\! \R^n$;
		  b) \textit{linear} if $\dn(s)$ is a linear mapping (i.e., $\dn(s)\in \R^{n\times n}$), $\forall s\in \R$;
		c)  a \textit{dilation} in $\R^n$ if $\liminf\limits_{s\to +\infty}\|\dn(s)x\|=+\infty$ and $\limsup\limits_{s\to -\infty}\|\dn(s)x\|=0$,  $\forall x\neq \zero$.
	Any linear continuous group in $\R^n$ admits the representation  \cite{Pazy1983:Book}
	\begin{equation}\label{eq:Gd}
		\dn(s)=e^{sG_{\dn}}=\sum_{j=1}^{\infty}\tfrac{s^jG_{\dn}^j}{j!}, \quad s\in \R,
	\end{equation}
	where $G_{\dn}\in \R^{n\times n}$ is a generator of $\dn$. A continuous linear group \eqref{eq:Gd} is a dilation in $\R^n$ if and only if $G_{\dn}$ is an anti-Hurwitz matrix \cite{Polyakov2018:RNC}. In this paper we deal only with continuous linear dilations. 
	A \textit{dilation} $\dn$ in $\R^n$ is
		a) \textit{monotone} if the function $s\mapsto \|\dn(s)x\|$ is strictly increasing,  $\forall x\neq \zero$;
	b) 	\textit{strictly monotone} if  $\exists \beta\!>\!0$ such that $\|\dn(s)x\|\!\leq\! e^{\beta s}\|x\|$, $\forall s\!\leq\! 0$, $\forall x\in \R^n$.
	The following result is the straightforward consequence of the existence of the quadratic Lyapunov function  for asymptotically stable LTI systems.
	\begin{corollary}\label{cor:monotonicity}
		A linear continuous dilation in $\R^n$ is strictly monotone with respect to the weighted Euclidean norm $\|x\|=\sqrt{x^{\top} Px}, 0\prec P\in \R^{n\times n}$ if and only if 
		\begin{equation}\label{eq:mon_d_P}
			PG_{\dn}+G_{\dn}^{\top}P\succ 0, \quad P\succ 0.
		\end{equation}
	\end{corollary}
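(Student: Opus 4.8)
The plan is to reduce both the geometric notion of strict monotonicity and the algebraic inequality \eqref{eq:mon_d_P} to a single scalar (in)equality for the function $V(s):=\|\dn(s)x\|^2=x^\top\dn(s)^\top P\dn(s)x$. Since $\dn(s)=e^{sG_{\dn}}$ by \eqref{eq:Gd}, this $V$ is smooth in $s$, and differentiating the matrix exponential yields the key identity
\begin{equation*}
V'(s)=x^\top e^{sG_{\dn}^\top}\left(PG_{\dn}+G_{\dn}^\top P\right)e^{sG_{\dn}}x=y^\top\left(PG_{\dn}+G_{\dn}^\top P\right)y,\qquad y:=\dn(s)x.
\end{equation*}
Everything then follows from comparing the two quadratic forms $y^\top(PG_{\dn}+G_{\dn}^\top P)y$ and $V(s)=y^\top Py$, using that $P\succ0$ makes $\|\cdot\|$ a genuine norm and that $\dn(s)$ is invertible, so $y\neq\zero$ whenever $x\neq\zero$.

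For sufficiency (assume \eqref{eq:mon_d_P}), I would first observe that $P\succ0$ and $PG_{\dn}+G_{\dn}^\top P\succ0$ are simultaneously comparable: a generalized-eigenvalue (congruence) argument on the pencil provides $\beta>0$ with $y^\top(PG_{\dn}+G_{\dn}^\top P)y\ge 2\beta\,y^\top Py$ for all $y\in\R^n$. The key identity then converts this into the scalar differential inequality $V'(s)\ge 2\beta V(s)$. Setting $W(s)=e^{-2\beta s}V(s)$ gives $W'(s)\ge0$, so $W$ is nondecreasing; evaluating $W(s)\le W(0)$ for $s\le0$ produces $V(s)\le e^{2\beta s}V(0)$, i.e. $\|\dn(s)x\|\le e^{\beta s}\|x\|$ for all $s\le0$ (the case $x=\zero$ being trivial), which is precisely strict monotonicity.

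For necessity, I would run the same estimate backwards. Strict monotonicity supplies $\beta>0$ with $V(s)\le e^{2\beta s}V(0)$ for all $s\le0$, hence $V(s)-V(0)\le(e^{2\beta s}-1)V(0)$. Dividing by $s<0$ reverses the inequality, and letting $s\to0^-$ (legitimate since $V$ is differentiable at $0$) gives $V'(0)\ge 2\beta V(0)$, that is $x^\top(PG_{\dn}+G_{\dn}^\top P)x\ge 2\beta\,x^\top Px>0$ for every $x\neq\zero$, which is \eqref{eq:mon_d_P}. The anti-Hurwitz property of $G_{\dn}$ that makes $\dn$ a dilation is exactly what guarantees such a $P$ exists in the first place, so the corollary simply localizes the classical Lyapunov characterization to a prescribed weighting matrix.

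The computations are routine; the only delicate point is the sign bookkeeping that links the two directions. The exponential estimate lives on $s\le0$, so in the necessity argument the difference quotient must be taken as the one-sided limit $s\to0^-$, where division by the negative quantity $s$ flips the inequality in exactly the way needed to recover the definiteness of $PG_{\dn}+G_{\dn}^\top P$. I expect this one-sided limit, rather than any matrix manipulation, to be the step most worth getting right.
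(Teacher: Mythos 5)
Your proof is correct, and it is essentially the argument the paper has in mind: the paper states this corollary without proof, calling it a ``straightforward consequence of the existence of the quadratic Lyapunov function for asymptotically stable LTI systems,'' and your computation with $V(s)=\|\dn(s)x\|^2$, the derivative identity $V'(s)=y^\top(PG_{\dn}+G_{\dn}^\top P)y$, and the comparison $Q\succeq 2\beta P$ is exactly that standard Lyapunov argument written out, including the correct one-sided limit $s\to 0^-$ in the necessity direction.
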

 
	Any dilation in $\R^n$ defines an alternative topology (balls, spheres, cones, etc) in $\R^n$. Below we study the systems on generalized homogeneous cones.
	\begin{definition}
		Let $\dn$ be a dilation in $\R^n$. A nonempty set $\D\subset \R^n$ is said to be $\dn$-homogeneous  cone if 
		$\dn(s)\D\subset \D$ for all $s\in \R$.
	\end{definition}
 
	A standard homogeneous cone (i.e., $\dn_1(s)=e^s I_n$) and the generalized homogeneous cone 
	are  illustrated on Fig. \ref{fig:cones}.
	\begin{figure}
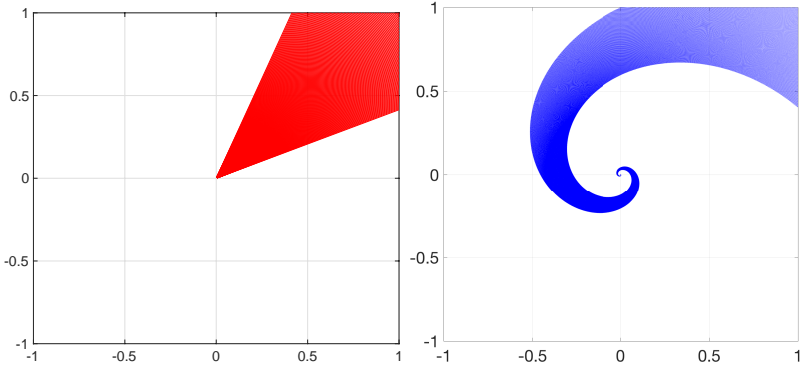

		\centering
		\includegraphics[width=53mm]{std_cone_present.pdf}   
		\includegraphics[width=52mm]{hom_cone_present.pdf}  
		\caption{Standard and generalized homogeneous cones}
		\label{fig:cones}
	\end{figure}
	Notice that if $\D\subset \R^n$ is a non-empty closed $\dn$-homogeneous cone  then $\zero \in \D$. Indeed, 
	if $x\in \D, x\neq \zero$ then $\dn(s)x\in \D,\forall s\in \R$ and $\dn(s)x\to \zero$ as $s\to -\infty$. Since $\D$
	is closed then the limit point  $\zero$  belongs to $\D$. 	Below we deal only with a linear continuous dilation $\dn$ in $\R^n$.

	\subsection{Canonical homogeneous norm}
	Any linear  continuous and monotone  dilation in a normed vector space introduces also an alternative norm topology defined by the so-called canonical homogeneous norm \cite{Polyakov2018:RNC}.
	\begin{definition}[{\small Canonical homogeneous norm}]
		\label{def:hom_norm_Rn}
		Let a linear dilation $\dn$ in $\R^n$ be  continuous and monotone with respect to a norm $\|\cdot\|$.
		A function $\|\cdot\|_{\dn} : \R^n \mapsto [0,+\infty)$ defined as  follows: $\|\zero\|_{\dn}=0$ and 
		\begin{equation}\label{eq:hom_norm_Rn}
			\|x\|_{\dn}=e^{s_x}, \;  \text{where} \; s_x\in \R: \|\dn(-s_x)x\|=1, \quad x\neq \zero
			\vspace{-1mm}
		\end{equation}
		is said to be a canonical $\dn$-homogeneous norm \index{canonical homogeneous norm} in  $\R^n$ 
	\end{definition}	 
 
	For standard dilation $\dn_1(s)=e^{s}I_n$ we, obviously, have $\|x\|_{\dn_1}=\|x\|$. In other cases, $\|x\|_{\dn}$ with $x\neq \zero$ is implicitly defined by a nonlinear algebraic equation, which always have a unique solution due to monotonicity of the dilation. In some particular cases \cite{PolyakovKrstic2023:TAC}, this implicit equation has explicit solution even for non-standard dilations. 
	\begin{lemma}\cite{Polyakov2018:RNC}\label{lem:hom_norm}
		If  a linear continuous dilation $\dn$ in $\R^n$  is  monotone with respect to a norm $\|\cdot\|$
		then
		\begin{itemize}
			\item[1)]   $\|\cdot\|_{\dn} : \R^n\mapsto \R_+$ is single-valued and continuous on $\R^n$;
			\item[2)]
			there exist $\sigma_1,\sigma_2 
			\in \mathcal{K}_{\infty}$
			such that 
			\begin{equation}\label{eq:rel_norm_and_hom_norm_Rn}
				\sigma_1(\|x\|_{\dn})\leq \|x\|\leq \sigma_2(\|x\|_{\dn}), \quad \quad 
				\forall x\in \R^n;
			\end{equation}
			\item[3)]   $\|\cdot\|$ is locally Lipschitz continuous on $\R^{n}\backslash\{\zero\}$ provided that  the linear dilation $\dn$ is strictly monotone
			;
			\item[4)] $\|\cdot\|_{\dn}$ is continuously differentiable on $\R^n\backslash\{\zero\}$ provided that $\|\cdot\|$ is continuously differentiable on $\R^n\backslash\{\zero\}$ and $\dn$ is strictly monotone.
		\end{itemize}
	\end{lemma}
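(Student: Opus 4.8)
The plan is to handle the four parts in order, reducing everything to the scalar function $g_x(s)=\|\dn(s)x\|$ and to compactness of the unit sphere $S=\{x\in\R^n:\|x\|=1\}$. For single-valuedness (part 1) I would fix $x\neq\zero$ and observe that $g_x$ is continuous (composition of the continuous group action with the norm) and strictly increasing (monotonicity of $\dn$), while the dilation property forces $g_x(s)\to0$ as $s\to-\infty$ and $g_x(s)\to+\infty$ as $s\to+\infty$. Hence $g_x:\R\to(0,+\infty)$ is a continuous strictly increasing bijection, so $g_x(-s_x)=1$ has exactly one root and $\|x\|_{\dn}=e^{s_x}$ is well defined. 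For continuity on $\R^n\setminus\{\zero\}$ I would use only monotonicity: given $x_0$ with root $-s_0$ and $\varepsilon>0$, strict monotonicity gives $g_{x_0}(-s_0+\varepsilon)>1>g_{x_0}(-s_0-\varepsilon)$; joint continuity of $(s,x)\mapsto\|\dn(s)x\|$ preserves these strict inequalities for $x$ near $x_0$, and monotonicity then traps the root of $g_x$ in $(-s_0-\varepsilon,-s_0+\varepsilon)$, i.e. $|s_x-s_0|<\varepsilon$. Composing with $\exp$ gives continuity of $\|\cdot\|_{\dn}$ away from the origin.

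For part 2 I would use that every $x\neq\zero$ factors as $x=\dn(s_x)y$ with $y=\dn(-s_x)x\in S$ and $e^{s_x}=\|x\|_{\dn}$. Setting $\sigma_1(\rho)=\min_{y\in S}\|\dn(\ln\rho)y\|$ and $\sigma_2(\rho)=\max_{y\in S}\|\dn(\ln\rho)y\|$ (with $\sigma_i(0)=0$) makes the sandwich $\sigma_1(\|x\|_{\dn})\le\|x\|\le\sigma_2(\|x\|_{\dn})$ immediate. The real content is $\sigma_1,\sigma_2\in\mathcal{K}_{\infty}$: continuity and strict monotonicity follow from those of $g_y$ and the attainment of the extrema over the compact set $S$, while the limits $\sigma_i(\rho)\to0$ as $\rho\to0^+$ and $\sigma_i(\rho)\to+\infty$ as $\rho\to+\infty$ require a uniform version of the dilation limits. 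I would obtain this by a finite-cover argument on $S$: for each $y$ pick $s_y$ with $g_y(s_y)>R$, widen to a neighbourhood by continuity, extend to all larger $s$ by monotonicity, and pass to a finite subcover. Inverting the sandwich yields $\sigma_2^{-1}(\|x\|)\le\|x\|_{\dn}\le\sigma_1^{-1}(\|x\|)$, which supplies the remaining continuity of $\|\cdot\|_{\dn}$ at $\zero$ and so completes part 1.

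For part 3 (local Lipschitz continuity of $\|\cdot\|_{\dn}$ under strict monotonicity) I would show $x\mapsto s_x$ is Lipschitz on every annulus $\{a\le\|x\|\le b\}$, $0<a<b$, and then compose with $\exp$. Strict monotonicity, $\|\dn(s)z\|\le e^{\beta s}\|z\|$ for $s\le0$, yields its dual $\|\dn(s)z\|\ge e^{\beta s}\|z\|$ for $s\ge0$. Taking nearby $x,x'$ with roots $s=s_x$, $s'=s_{x'}$ and, say, $s<s'$, and using $\|\dn(-s)x'\|\ge e^{\beta(s'-s)}\|\dn(-s')x'\|$ together with the defining identities $\|\dn(-s)x\|=\|\dn(-s')x'\|=1$, I get
\[
\beta|s-s'|\le e^{\beta(s'-s)}-1\le \|\dn(-s)x'\|-\|\dn(-s)x\|\le \|\dn(-s_x)\|_{\mathrm{op}}\,\|x-x'\|.
\]
Since $s_x$ stays in a bounded interval on the annulus (by part 2), $\|\dn(-s_x)\|_{\mathrm{op}}$ is bounded there, whence $|s_x-s_{x'}|\le\beta^{-1}\|\dn(-s_x)\|_{\mathrm{op}}\,\|x-x'\|$, the desired estimate.

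Finally, for part 4 I would apply the implicit function theorem to $\Psi(s,x)=N(\dn(-s)x)-1$, where $N=\|\cdot\|$. When $N$ is $C^1$ away from $\zero$, $\Psi$ is $C^1$ on $\{(s,x):x\neq\zero\}$, since $\dn(-s)x$ is analytic in $(s,x)$ and nonzero there. The key computation is $\partial_s\Psi(s_x,x)=-\nabla N(y)^{\top}G_{\dn}y$ with $y=\dn(-s_x)x\in S$; differentiating the strict-monotonicity inequalities for $\phi(s)=N(\dn(s)y)$ at $s=0$ gives $\nabla N(y)^{\top}G_{\dn}y=\phi'(0)\ge\beta\|y\|=\beta>0$, so $\partial_s\Psi(s_x,x)\le-\beta<0$ is nonvanishing. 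Hence $s_x$, and therefore $\|x\|_{\dn}=e^{s_x}$, is $C^1$ on $\R^n\setminus\{\zero\}$, with its gradient computable from the implicit formula. I expect the main obstacle to be part 2's uniform passage to the dilation limits over the compact sphere (the finite-cover step) and, closely related, the quantitative lower bound in part 3 that turns strict monotonicity into Lipschitz continuity; once the nonvanishing of $\partial_s\Psi$ is in hand, the differentiability in part 4 is routine.
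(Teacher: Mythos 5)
Your proof is correct in all four parts, but note that the paper itself offers no proof of this lemma --- it is quoted verbatim from the cited reference \cite{Polyakov2018:RNC} --- so there is no in-paper argument to compare against. Your route (reduction to the scalar map $g_x(s)=\|\dn(s)x\|$ for existence/uniqueness and continuity of the implicit root, the compactness/finite-cover argument on the unit sphere to get uniform dilation limits and hence $\sigma_1,\sigma_2\in\mathcal{K}_\infty$, the strict-monotonicity estimate $\|\dn(s)z\|\geq e^{\beta s}\|z\|$ for $s\geq 0$ yielding the Lipschitz bound $|s_x-s_{x'}|\leq \beta^{-1}\|\dn(-s_x)\|_{\mathrm{op}}\|x-x'\|$, and the implicit function theorem with $\partial_s\Psi\leq-\beta<0$) is essentially the standard proof given in that reference, including your tacit and correct reading of item 3 as asserting local Lipschitz continuity of $\|\cdot\|_{\dn}$ (the statement's ``$\|\cdot\|$'' is a typo).
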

 
	For the $\dn$-homogeneous norm $\|x\|_{\dn}$ induced by the weighted Euclidean norm $\|x\|=\sqrt{x^{\top}Px}$ we have \cite{Polyakov2018:RNC}
	\begin{equation}\label{eq:hom_norm_der}
	    \tfrac{\partial \|x\|_{\dn}}{\partial x}=\|x\|_{\dn}\tfrac{x^{\top}\dn^{\top}(-\ln \|x\|_{\dn})P\dn(-\ln \|x\|_{\dn})}{x^{\top}\dn^{\top}(-\ln \|x\|_{\dn})PG_{\dn}\dn(-\ln \|x\|_{\dn})x}.
	\end{equation}
	If $\dn$ is a linear continuous strictly monotone dilation then the mapping $\Psi : \R^n \mapsto  \R^n$,
	\begin{equation}\label{eq:coord_trans_Rn}
		\Psi(x)=\|x\|_{\dn} \dn(-\ln \|x\|_{\dn}) x, \quad x\in \R^n\backslash\{\zero\}
	\end{equation}
	is a homeomorphism in $\R^n$, its inverse is given by 
	$
	\Psi^{-1}(z)=\|z\|^{-1}\dn(\ln \|z\|)z, \quad z\in  \R^n\backslash\{\zero\}
	$,
	with the prolongation $\Psi(\zero)=\Psi^{-1}(\zero)=\zero$ by continuity. If $\|\cdot\|$ is differentiable on $\R^{n}\backslash\{\zero\}$ then $\Psi$ is diffeomorphism on $\R^n\backslash\{\zero\}$.
	
	\begin{theorem}\cite{Polyakov2020:Book}\label{thm:tildeRn}
		Let a linear continuous dilation $\dn$ in $\R^n$ be monotone with respect to  a norm $\|\cdot\|$. Let an addition operation $\tilde + : \R^n\times \R^n \mapsto \R^n$ and a 
		multiplication by a scalar $\tilde \cdot: \R\times \R^n\mapsto \R^n$ be defined as 
		follows 
		\begin{itemize}
			\item $x\tilde+y:=\Psi^{-1}(\Psi(x)+\Psi(y))$, where $x,y\in \R^n$,
			\item $\lambda \tilde \cdot x:=\sign(\lambda)\dn(\ln |\lambda|)x$, where $\lambda\in \R$, $x\in \R^n$,
		\end{itemize}
		where $\Psi: \R^n\mapsto \R^n$ is given by \eqref{eq:coord_trans_Rn}.
		Then  the set $\R^n$ together with the operations $\tilde +$ and $\tilde \cdot$
		is a linear vector space $\R^n_{\dn}$ with the norm  $\|\cdot\|_{\dn}$.
	\end{theorem}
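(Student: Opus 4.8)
The plan is to show that the single map $\Psi$ of \eqref{eq:coord_trans_Rn} simultaneously is a bijection of $\R^n$, is an isometry in the sense $\|\Psi(x)\|=\|x\|_{\dn}$, and \emph{intertwines} the operations $\tilde+,\tilde\cdot$ with the ordinary operations $+,\cdot$. Granting this, both the vector-space axioms and the norm axioms are transported verbatim from the standard structure of $\R^n$, so the only real work is in verifying the intertwining relations. First I would record two elementary facts about the canonical homogeneous norm. Since $s_x=\ln\|x\|_{\dn}$ is by definition the solution of $\|\dn(-s_x)x\|=1$, the vector $\dn(-\ln\|x\|_{\dn})x$ is a unit vector, so
\begin{equation}\label{eq:plan1}
\|\Psi(x)\|=\|x\|_{\dn},\qquad x\in\R^n,\qquad \Psi(\zero)=\zero.
\end{equation}
Moreover the group law $\dn(s)\circ\dn(t)=\dn(s+t)$ together with linearity of $\dn$ gives the homogeneity $\|\dn(s)x\|_{\dn}=e^{s}\|x\|_{\dn}$ and the symmetry $\|-x\|_{\dn}=\|x\|_{\dn}$ (the latter from $\|-u\|=\|u\|$).

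Next I would check that $\Psi$ is a bijection directly from the explicit candidate inverse $\Psi^{-1}(z)=\|z\|^{-1}\dn(\ln\|z\|)z$: substituting and collapsing with the group law yields $\dn(-\ln\|z\|)\Psi^{-1}(z)=\|z\|^{-1}z$, a unit vector, whence $\|\Psi^{-1}(z)\|_{\dn}=\|z\|$ by uniqueness (guaranteed by monotonicity, Lemma \ref{lem:hom_norm}), and then $\Psi(\Psi^{-1}(z))=z$; the reverse identity is analogous. Note that only the group law and uniqueness are used here, so monotonicity suffices. With $\Psi$ a bijection, the intertwining for addition is immediate from the definition, $\Psi(x\tilde+y)=\Psi(x)+\Psi(y)$. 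For scalar multiplication with $\lambda\neq0$ I would set $y=\sign(\lambda)\dn(\ln|\lambda|)x$; the symmetry and homogeneity above give $\|y\|_{\dn}=|\lambda|\,\|x\|_{\dn}$, and a short computation with the group law (using $\sign(\lambda)|\lambda|=\lambda$) collapses $\Psi(y)$ to $\lambda\Psi(x)$, the case $\lambda=0$ being covered by the convention $0\tilde\cdot x=\zero$. Thus
\begin{equation}\label{eq:plan2}
\Psi(x\tilde+y)=\Psi(x)+\Psi(y),\qquad \Psi(\lambda\tilde\cdot x)=\lambda\,\Psi(x).
\end{equation}

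With \eqref{eq:plan2} in hand the remainder is pure transport of structure. Each vector-space axiom for $(\R^n,\tilde+,\tilde\cdot)$ is proved by applying $\Psi$, invoking the corresponding axiom for $(\R^n,+,\cdot)$, and using injectivity of $\Psi$: associativity, commutativity and distributivity all reduce this way; the additive neutral element is $\zero$ since $\Psi(\zero)=\zero$; and the additive inverse of $x$ is $(-1)\tilde\cdot x=-x$, for which $\Psi(-x)=-\Psi(x)$ by the second relation in \eqref{eq:plan2} with $\lambda=-1$. Finally the three norm axioms for $\|\cdot\|_{\dn}$ follow from \eqref{eq:plan1}: positive definiteness is part~1) of Lemma \ref{lem:hom_norm}, absolute homogeneity $\|\lambda\tilde\cdot x\|_{\dn}=|\lambda|\,\|x\|_{\dn}$ is exactly the computation already performed, and the triangle inequality is obtained from that of $\|\cdot\|$ through $\Psi$, namely $\|x\tilde+y\|_{\dn}=\|\Psi(x)+\Psi(y)\|\le\|\Psi(x)\|+\|\Psi(y)\|=\|x\|_{\dn}+\|y\|_{\dn}$.

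I expect the only genuinely computational step to be the scalar-multiplication identity in \eqref{eq:plan2}, since it is the one place where the $\sign$ factor and the homogeneity/symmetry of $\|\cdot\|_{\dn}$ must be combined correctly; every other step is bookkeeping once $\Psi$ is recognized as a structure-preserving bijection onto the standard normed space $(\R^n,\|\cdot\|)$.
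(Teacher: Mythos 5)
Your proof is correct and is essentially the intended argument: the paper states this theorem without proof (citing \cite{Polyakov2020:Book}), and the source's proof likewise proceeds by transport of structure along the bijection $\Psi$ of \eqref{eq:coord_trans_Rn}, using $\|\Psi(x)\|=\|x\|_{\dn}$ together with the intertwining identities $\Psi(x\tilde+y)=\Psi(x)+\Psi(y)$ and $\Psi(\lambda\tilde\cdot x)=\lambda\Psi(x)$. Your verification of the inverse formula via uniqueness of $s_x$ (monotonicity, Lemma \ref{lem:hom_norm}), the homogeneity $\|\dn(s)x\|_{\dn}=e^{s}\|x\|_{\dn}$, the $\sign$-factor computation for scalar multiplication, and the handling of $\lambda=0$ by the convention $0\tilde\cdot x=\zero$ are all sound.
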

 
	The latter theorem justifies the name norm for  $\|\cdot\|_{\dn}$. Notice that $\dn$-homogeneous cones are usually nonlinear in $\R^n$ (see Fig. \ref{fig:cones}), but they may be linear  in $\R^n_{\dn}$.
	\begin{corollary}\label{cor:hom_lin_cone}
		For any $h_i\in \R^n, i=1,\ldots,p$	the set 
		\begin{equation}\label{eq:Omega_lin}
			\Omega_{\rm lin}=\{x\in \R^n: h_i^{\top}{\Psi(x)}\geq 0, i=1,\ldots,p\}
		\end{equation}
		is a linear positive cone in the space $\R^n_{\dn}$.
	\end{corollary}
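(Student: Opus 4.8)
The plan is to exploit the map $\Psi$ as a bridge between the nonlinear algebra of $\R^n_{\dn}$ and ordinary linear algebra on $\R^n$: I expect $\Psi$ to be precisely the linear isomorphism identifying $(\R^n_{\dn},\tilde+,\tilde\cdot)$ with the standard Euclidean space $(\R^n,+,\cdot)$, at least on the positive scalars that the cone axioms require. Granting this, the set $\Omega_{\rm lin}$ is nothing but the $\Psi$-preimage of the ordinary polyhedral cone $\{z\in\R^n:h_i^\top z\ge 0,\ i=1,\dots,p\}$, which is visibly closed under addition and positive scaling; transporting these two closure properties back through $\Psi$ yields exactly linearity and the positive-cone property in $\R^n_{\dn}$.

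Concretely I would proceed in three short steps. First, additivity of $\Psi$ is immediate from the very definition of $\tilde+$: applying $\Psi$ to $x\tilde+y=\Psi^{-1}(\Psi(x)+\Psi(y))$ gives $\Psi(x\tilde+y)=\Psi(x)+\Psi(y)$. Second, I would establish the positive homogeneity $\Psi(\lambda\tilde\cdot x)=\lambda\,\Psi(x)$ for $\lambda>0$. Since $\lambda\tilde\cdot x=\dn(\ln\lambda)x$ for positive $\lambda$, this reduces to the scaling identity $\Psi(\dn(s)x)=e^s\Psi(x)$, which in turn follows from the homogeneity of the canonical norm, $\|\dn(s)x\|_{\dn}=e^{s}\|x\|_{\dn}$: substituting into $\Psi(\dn(s)x)=\|\dn(s)x\|_{\dn}\,\dn(-\ln\|\dn(s)x\|_{\dn})\dn(s)x$ collapses the dilation exponents to $-\ln\|x\|_{\dn}$ and leaves a factor $e^{s}$ out front. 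Third, I would combine these with the linearity of the functional $z\mapsto h_i^\top z$: for $x,y\in\Omega_{\rm lin}$ and $\lambda>0$ one gets $h_i^\top\Psi(x\tilde+y)=h_i^\top\Psi(x)+h_i^\top\Psi(y)\ge 0$ and $h_i^\top\Psi(\lambda\tilde\cdot x)=\lambda\,h_i^\top\Psi(x)\ge 0$ for every $i$, so both $x\tilde+y$ and $\lambda\tilde\cdot x$ lie in $\Omega_{\rm lin}$, which is exactly what the two cone axioms demand.

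The only nontrivial step is the second one, the scaling identity for $\Psi$, and even there the real content is the homogeneity $\|\dn(s)x\|_{\dn}=e^s\|x\|_{\dn}$ of the canonical homogeneous norm. I would derive this from the defining relation $\|\dn(-s_x)x\|=1$ together with the uniqueness of $s_x$ guaranteed by monotonicity of $\dn$ (Lemma \ref{lem:hom_norm}): if $\|x\|_{\dn}=e^{s_x}$, then $\|\dn(-(s+s_x))\dn(s)x\|=\|\dn(-s_x)x\|=1$, which identifies $s+s_x$ as the defining exponent of $\dn(s)x$ and hence forces $\|\dn(s)x\|_{\dn}=e^{s+s_x}=e^s\|x\|_{\dn}$. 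Once this identity is in hand the corollary is immediate, and no appeal to the negative-scalar part of $\tilde\cdot$ is needed, since the positive-cone axiom only involves $\lambda>0$.
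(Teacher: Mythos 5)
Your proof is correct and, for the step the paper actually writes out, identical in approach: the paper's proof consists solely of the additivity computation $h_i^{\top}\Psi(x\tilde{+}y)=h_i^{\top}\Psi(x)+h_i^{\top}\Psi(y)\geq 0$, which is exactly your first and third steps. Your proposal is in fact more complete than the paper's, since the positive-cone axiom ($\lambda\,\tilde{\cdot}\,x\in\Omega_{\rm lin}$ for $\lambda>0$) is left implicit there, and you verify it correctly via the group property $\dn(-(s+s_x))\dn(s)=\dn(-s_x)$, which yields $\|\dn(s)x\|_{\dn}=e^{s}\|x\|_{\dn}$, hence $\Psi(\dn(s)x)=e^{s}\Psi(x)$ and $h_i^{\top}\Psi(\lambda\,\tilde{\cdot}\,x)=\lambda\, h_i^{\top}\Psi(x)\geq 0$.
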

 
	\begin{proof}
		Indeed, since  for $x,y\in \Omega_{\rm lin}$ we have $h_i^{\top}\Psi(x)\geq 0$, $h_i^{\top}\Psi(y)\geq 0$ for all $i=1,\ldots,p$
		then using $x\tilde+y=\Psi^{-1}(\Psi(x)+\Psi(y))$
		we derive 
		$
		h_i^{\top} \Psi(x\tilde+ y)=h_i^{\top}\Psi(x)+h_i^{\top}\Psi(y)\geq  0 \quad \Rightarrow \quad  x\tilde +y\in\Omega_{\rm lin}.
		$
	\end{proof}

	\subsection{Homogeneous functions and vector fields}
	
	Below we study systems that are symmetric on homogeneous cones with respect to a linear dilation $\dn$. The  dilation symmetry introduced by the following definition is known as a generalized  homogeneity \cite{Zubov1958:IVM}, \cite{Kawski1991:ACDS}, \cite{Rosier1992:SCL}, \cite{BhatBernstein2005:MCSS}, \cite{Polyakov2020:Book}.
	
	\begin{definition}\cite{Kawski1991:ACDS}\label{def:hom_fun}
		A function $h: \R^n\mapsto \R$ is $\dn$-homogeneous of degree $\nu\!\in\! \R$ if 
		\[
		h(\dn(s)x)=e^{\nu s}h(x), \quad \forall x\in \R^n, \quad \forall s\in \R.
		\]
	\end{definition}

	Notice that $h(x)=\|x\|_{\dn}^{\nu}h(\dn(-\ln \|x\|_{\dn})x), \forall x\neq \zero$ for any $\dn$-homogeneous function $h$ of degree $\nu$,
	where $\|\cdot\|_{\dn}$ is a canonical homogeneous norm induced by a norm $\|\cdot \|$ in $\R^n$, so $\|\dn(-\ln \|x\|_{\dn})x\|=1,\forall x\neq \zero$.
	The latter implies that if a $\dn$-homogeneous function $h$ is continuous at zero then either $h\equiv {\rm const}$ or $h(\zero)=0$ and $\nu>0$. So, non-constant \textit{continuous} $\dn$-homogeneous  functions $\phi_i$  may define a $\dn$-homogeneous cone  $\Omega$ given by \eqref{eq:inv_set} \textit{if and only if} they have positive homogeneity degree. 
	\begin{assumption}\label{as:1}
		Let  $\phi_i\in   C(\R^n,\R)\cap C^1(\R^n\backslash\{\zero\})$ in \eqref{eq:inv_set} be  $\dn$-homogeneous of degree $\nu_i> 0$, $i=1,2,\ldots,p$ 
	\end{assumption}
 
	 The set $\Omega_i=\{x\in \R^n: \phi_i(x)\geq 0\}$ with $\phi_i$ satisfying  this assumption is a $\dn$-homogeneous cone with a smooth boundary everywhere except, probably, the origin. 
		The set $\Omega=\bigcap\limits_{i=1}^p \Omega_i$ given by \eqref{eq:inv_set} is also a $\dn$-homogeneous cone.
		\begin{assumption}\label{as:1bis}
			The set $\Omega_0$ given by \eqref{eq:Omega_0} is bounded. 
		\end{assumption}
  
		Since $\Omega_0$ is a $\dn$-homogeneous cone, then  \textit{the boundedness of $\Omega_0$ means that}   
	\begin{equation}\label{eq:omega0=0}
		\Omega_0\!=\!\{\zero\}.
	\end{equation}
	Indeed, otherwise, the set $\Omega_0$ is unbounded since  $\dn(s)x\in \Omega_0, \forall x\in \Omega_0, \forall s\in \R$,
	but $\|\dn(s)x\|\to+\infty$ as $s\to +\infty$ for any $x\neq \zero$.  Below we study the systems on $\dn$-homogeneous cones $\Omega$ with both bounded and unbounded sets	 $\Omega_0$.
	
	\begin{definition}\cite{Kawski1991:ACDS}\label{def:hom_vf}
		A vector field $f:\R^n \mapsto \R^n$ is  $\dn$-homogeneous of 
		degree $\mu\in \R$  if 
		\begin{equation}\label{eq:homogeneous_operator_Rn}
			g(\dn(s)x)=e^{\mu s}\dn(s)g(x), \quad  \quad  \forall s\in\R, \quad \forall x\in \R^n.
		\end{equation}
	\end{definition}
 
	
	Formally, to avoid a collision in Definitions \ref{def:hom_fun} and \ref{def:hom_vf} for $n=1$, a vector field $g$  should be defined as $g: \Xi\mapsto T\Xi$, where $T\Xi$ is the tangent space for $\Xi$.  Since the tangent space of $\R^n$ is associated with $\R^n$, we  simply write $g:\R^n\mapsto \R^n$.
	
	The  homogeneity of a mapping is inherited by other mathematical objects induced by this mapping.
	In particular, solutions of $\dn$-homogeneous system\footnote{A system is homogeneous  if its is governed by a $\dn$-homogeneous vector field}
	\begin{equation}\label{eq:hom_system}
		\dot x=g(x),\quad  t>0, \quad x(0)=x_0\in \R^n
	\end{equation}
	are symmetric with respect to the dilation $\dn$ in the following sense \cite{Zubov1958:IVM}, \cite{Kawski1991:ACDS}, \cite{BhatBernstein2005:MCSS}
	\begin{equation}
		x(t,\dn(s)x_0)=\dn(s)x(e^{\mu s}t,x_0),
	\end{equation}  
	where $x(\cdot,z)$ denotes a solution of  \eqref{eq:hom_system}  with $x(0)=z\in \R^n$ and 
	$\mu\in \R$ is the homogeneity degree of $g$. The mentioned symmetry  
	of solutions implies many useful properties of homogeneous system such as equivalence of local and global results (e.g, about existence, uniqueness and stability  solutions) or finite-time stability of any asymptotically stable $\dn$-homogeneous  system \eqref{eq:hom_system} with  negative degree  \cite{BhatBernstein2005:MCSS}.
	\begin{corollary}\label{cor:forward_mu_neg}
	 If $g\in C(\R^n)$ is a  $\dn$-homogeneous vector field of degree $\mu\leq 0$  then the system \eqref{eq:hom_system} is forward complete\footnote{A system is  forward complete on $\Omega$ if, for any initial state $x_0\in \Omega$, any solution of the system is defined globally in time.}.
	\end{corollary}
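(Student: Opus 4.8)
The plan is to rule out finite-time escape by controlling the canonical homogeneous norm along trajectories: a homogeneity degree $\mu\le 0$ forces this norm to grow no faster than the solution of a scalar comparison ODE whose nonlinearity has exponent $\le 1$, and such an ODE has no finite-time blow-up. Since forward completeness does not depend on the chosen norm, I would first fix a convenient one. Because $G_{\dn}$ is anti-Hurwitz, standard Lyapunov theory yields $P\succ 0$ with $PG_{\dn}+G_{\dn}^{\top}P\succ 0$, so by Corollary~\ref{cor:monotonicity} the dilation $\dn$ is strictly monotone with respect to $\|x\|=\sqrt{x^{\top}Px}$. As this norm is smooth off the origin, Lemma~\ref{lem:hom_norm}(4) makes $V(x):=\|x\|_{\dn}$ continuously differentiable on $\R^n\backslash\{\zero\}$, with gradient given by \eqref{eq:hom_norm_der}.

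Next I would set $W(x):=\tfrac{\partial V}{\partial x}(x)\,g(x)$, which is continuous on $\R^n\backslash\{\zero\}$. Since $V$ is $\dn$-homogeneous of degree $1$ and $g$ is $\dn$-homogeneous of degree $\mu$, a direct evaluation of $W(\dn(s)x)$ shows that $W$ is $\dn$-homogeneous of degree $1+\mu$. Writing any $x\neq\zero$ as $x=\dn(\ln V(x))\,\hat x$ with $\hat x\in S:=\{\,\|x\|=1\,\}=\{\,\|x\|_{\dn}=1\,\}$ gives $W(x)=V(x)^{1+\mu}W(\hat x)$, and compactness of $S$ yields a finite constant $c:=\max\{0,\max_{\hat x\in S}W(\hat x)\}$ with $W(x)\le c\,V(x)^{1+\mu}$ for all $x\neq\zero$. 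Hence $\tfrac{d}{dt}V(x(t))=W(x(t))\le c\,V(x(t))^{1+\mu}$ along any solution on every interval where $x(t)\neq\zero$.

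Finally, let $x(\cdot)$ be any solution (existing locally by Peano's theorem, as $g$ is continuous) on its maximal interval $[0,T)$, and suppose $T<\infty$. If $x$ were bounded on $[0,T)$ then $\dot x=g(x)$ would be bounded, $x$ uniformly continuous, and the solution extendable past $T$, contradicting maximality; hence $\limsup_{t\to T}\|x(t)\|=+\infty$. Comparing $V(x(t))$ with the solution of $\dot v=c\,v^{1+\mu}$ on each subinterval where $x(t)\neq\zero$, and using $\mu\le 0$ (so $1+\mu\le 1$), the scalar comparison equation exhibits no finite-time escape for \emph{any} initial value: its solutions grow at most exponentially when $\mu=0$ and at most like $(v(t_1)^{-\mu}+c(-\mu)(t-t_1))^{1/(-\mu)}$ when $\mu<0$. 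Consequently $V(x(t))$ stays bounded by a finite constant on all of $[0,T)$ — even across any passage through the origin, precisely because the comparison solution started from $v=0$ remains finite when $\mu\le 0$. By \eqref{eq:rel_norm_and_hom_norm_Rn} this bounds $\|x(t)\|\le\sigma_2(V(x(t)))$, contradicting $\limsup_{t\to T}\|x(t)\|=+\infty$. Therefore $T=+\infty$.

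I expect the main obstacle to be the interface between the homogeneity bookkeeping and the regularity issues rather than any hard analysis. One must reduce to a differentiable homogeneous norm to legitimately compute $\dot V$, verify that $1+\mu\le 1$ is exactly the threshold separating global existence from possible finite-time escape (for $\mu>0$ the exponent exceeds $1$ and blow-up is genuinely possible), and check that the non-smoothness of $V$ at $\zero$ and the possible non-uniqueness of solutions do no harm — which they do not, since an escaping trajectory is eventually bounded away from $\zero$ and the comparison bound is uniform even through the origin. An alternative route via the dilation symmetry $x(t,\dn(s)x_0)=\dn(s)x(e^{\mu s}t,x_0)$ together with uniform local existence on the compact sphere $S$ is possible, but the comparison argument above is more self-contained.
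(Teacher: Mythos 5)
Your proposal is correct and takes essentially the same route as the paper's proof: both differentiate the canonical homogeneous norm along trajectories using \eqref{eq:hom_norm_der}, invoke $\dn$-homogeneity of $g$ and compactness of the unit sphere to obtain $\tfrac{d}{dt}\|x\|_{\dn}\leq \gamma\|x\|_{\dn}^{1+\mu}$, and conclude from $1+\mu\leq 1$ that the comparison ODE admits no finite-time blow-up. Your additional care with the maximal-interval argument, non-uniqueness, and passage through the origin merely makes explicit details the paper leaves implicit.
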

 
	\begin{proof}
		By Corollary \ref{cor:monotonicity}, any linear continuous dilation is strictly monotone with respect to the weighted Euclidean norm $\|x\|=\sqrt{x^{\top}Px}$ with P satisfying \eqref{eq:mon_d_P}. Let $\|\cdot\|_{\dn}$ be the canonical homogeneous norm induced by $\|\cdot \|$. Then using \eqref{eq:hom_norm_der} and $\dn$-homogeneity of $g$ we derive
		\[
		\tfrac{d}{dt}\|x\|_{\dn}=\|x\|_{\dn} \tfrac{x^{\top}\dn^{\top}(-\ln \|x\|_{\dn})P\dn(-\ln \|x\|_{\dn})\dot x}{x^{\top}\dn^{\top}(-\ln \|x\|_{\dn})PG_{\dn}\dn(-\ln \|x\|_{\dn})x}=\|x\|^{1+\mu}_{\dn}\tilde g(\dn(-\ln \|x\|_{\dn})x),
		\]
		where $\tilde g(z)=\tfrac{z^{\top}Pg(z)}{z^{\top}PG_{\dn}z}$, $z\in\R^n$. Taking into account $\|\dn(-\ln \|x\|_{\dn})x\|=1$ we conclude $\exists \gamma>0 : \|\tilde g(\dn(-\ln \|x\|_{\dn})x)\|\leq \gamma=\sup_{\|z\|=1}\|\tilde g(z)\|<+\infty$ for all $x\neq \zero$ and 
		$
		\tfrac{d}{dt}\|x\|_{\dn}\leq \gamma \|x\|^{1+\mu}_{\dn},\quad \forall x\neq \zero.
		$
		For $\mu\leq 0$ the obtained differential inequality guarantees that the system \eqref{eq:hom_system} is forward complete.
	\end{proof}

	The well-known  Euler's homogeneous function theorem proves that the derivative of any smooth standard homogeneous function $h(e^s x)=e^{\nu s}h(x)$ is homogeneous and $\frac{\partial h(x)}{\partial x}x=\nu f(x),\forall x\in \R^n$. An analog of the Euler's theorem for $\dn$-homogeneous vector fields is given below.
	\begin{theorem}\cite{Polyakov2020:Book}\label{thm:Euler}
 If $h\!\in\! C^1(\R^n\backslash\{\zero\},\R)$ is $\dn$-homogeneous function of degree $\nu\in \R$ then 
		\begin{equation}
			\tfrac{\partial h(x)}{\partial x}G_{\dn}x=\nu h(x),  \quad \forall x\neq \zero,
		\end{equation} 
		\begin{equation}
			\left.\tfrac{\partial h(z)}{\partial z}\right|_{z=\dn(s)x}\dn(s)=e^{\nu s}\tfrac{\partial h(x)}{\partial x}, \quad \forall x\neq \zero, \quad \forall s\in \R.
		\end{equation}   	
		If $g\in C^1(\R^n\backslash\{\zero\},\R^n)$ is a $\dn$-homogeneous vector field of degree $\mu\in \R$ then 
		\begin{equation}
			\tfrac{\partial g(x)}{\partial x}G_{\dn}x=(\mu I_n+G_{\dn})g(x),  \quad \forall x\neq \zero,
		\end{equation} 
		\begin{equation}
			\left.\tfrac{\partial g(z)}{\partial z}\right|_{z=\dn(s)x}\dn(s)=e^{\mu s}\dn(s)\tfrac{\partial g(x)}{\partial x}, \quad \forall x\neq \zero, \quad \forall s\in \R.
		\end{equation}   	
	\end{theorem}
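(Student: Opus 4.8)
The plan is to establish each of the four identities by differentiating the defining homogeneity relations — once in the group parameter $s$ (and evaluating at $s=0$) to obtain the infinitesimal "Euler" identities, and once in the state $x$ to obtain the covariance identities. Throughout I would rely on the representation \eqref{eq:Gd}, which yields $\tfrac{d}{ds}\dn(s)=G_{\dn}\dn(s)=\dn(s)G_{\dn}$, and on the fact that $\dn(s)=e^{sG_{\dn}}$ is invertible for every $s$, so that $\dn(s)x\neq\zero$ whenever $x\neq\zero$. This last observation is what keeps the argument $\dn(s)x$ inside $\R^n\backslash\{\zero\}$, the set on which $h$ and $g$ are assumed $C^1$, so that all the chain-rule manipulations below are legitimate.

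For the scalar case I would start from the defining relation $h(\dn(s)x)=e^{\nu s}h(x)$ of Definition \ref{def:hom_fun}. Differentiating both sides with respect to $s$ and applying the chain rule, the left-hand side becomes $\left.\tfrac{\partial h(z)}{\partial z}\right|_{z=\dn(s)x}G_{\dn}\dn(s)x$ while the right-hand side becomes $\nu e^{\nu s}h(x)$; setting $s=0$ and using $\dn(0)=I_n$ gives the first identity $\tfrac{\partial h(x)}{\partial x}G_{\dn}x=\nu h(x)$. Differentiating instead the same relation with respect to $x$ — and noting that $\dn(s)$ is a fixed matrix as far as $x$ is concerned — the chain rule gives $\left.\tfrac{\partial h(z)}{\partial z}\right|_{z=\dn(s)x}\dn(s)=e^{\nu s}\tfrac{\partial h(x)}{\partial x}$, which is the second identity.

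The vector-field case is entirely parallel, starting now from \eqref{eq:homogeneous_operator_Rn}, namely $g(\dn(s)x)=e^{\mu s}\dn(s)g(x)$. Differentiating in $s$, the left-hand side is $\left.\tfrac{\partial g(z)}{\partial z}\right|_{z=\dn(s)x}G_{\dn}\dn(s)x$, while on the right the product rule together with $\tfrac{d}{ds}\dn(s)=G_{\dn}\dn(s)$ produces $\mu e^{\mu s}\dn(s)g(x)+e^{\mu s}G_{\dn}\dn(s)g(x)$; evaluating at $s=0$ collects the two terms into $(\mu I_n+G_{\dn})g(x)$, giving the third identity $\tfrac{\partial g(x)}{\partial x}G_{\dn}x=(\mu I_n+G_{\dn})g(x)$. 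Differentiating \eqref{eq:homogeneous_operator_Rn} in $x$ instead, with $\dn(s)$ treated as a constant linear map, yields $\left.\tfrac{\partial g(z)}{\partial z}\right|_{z=\dn(s)x}\dn(s)=e^{\mu s}\dn(s)\tfrac{\partial g(x)}{\partial x}$, the fourth identity.

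The computations themselves are mechanical, so the only point requiring genuine care — and the step I would treat as the main obstacle — is the justification of differentiating through the composition $s\mapsto h(\dn(s)x)$ (and $g(\dn(s)x)$) given only $C^1$ regularity away from the origin. Here the two facts noted at the outset do the work: $s\mapsto\dn(s)$ is real-analytic by \eqref{eq:Gd}, and $\dn(s)$ is invertible, so for each fixed $x\neq\zero$ the curve $s\mapsto\dn(s)x$ never meets the origin and stays in the domain of differentiability of $h$ and $g$; the chain rule then applies on a neighborhood of every $s$, in particular at $s=0$. With that verified, the four identities follow directly from the two differentiations described above.
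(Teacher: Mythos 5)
Your proof is correct: differentiating the defining relations $h(\dn(s)x)=e^{\nu s}h(x)$ and $g(\dn(s)x)=e^{\mu s}\dn(s)g(x)$ once in $s$ (evaluated at $s=0$, using $\tfrac{d}{ds}\dn(s)=G_{\dn}\dn(s)$) and once in $x$ (with $\dn(s)$ a fixed invertible matrix) yields all four identities, and your care about the curve $s\mapsto\dn(s)x$ avoiding the origin is exactly what legitimizes the chain rule under the stated $C^1(\R^n\backslash\{\zero\})$ regularity. The paper itself states this theorem by citation to \cite{Polyakov2020:Book} without reproducing a proof, and your argument is precisely the standard derivation used there, so nothing further is needed.
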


		The perturbed  homogeneous system \eqref{eq:p_hom_system} under consideration is characterized  by the following  assumption inspired by \cite{Ryan1995:SCL}, \cite{Hong2001:Aut}, \cite{Andrieu_etal2008:SIAM_JCO}.
	\begin{assumption}\label{as:2}		
		Let a linear continuous dilation  $\dn$ in $\R^n$ be monotone with respect to a norm $\|\cdot\|_{\R^n}$, and a linear continuous dilation $\tilde \dn$ in $\R^k$ be monotone with respect to a norm $\|\cdot\|_{\R^k}$.
		Let  the vector field $\hat f \in C(\R^{n+k})$ given by	
		\[
		\hat f(x,q)=
		\left(
		\begin{array}{c}
			f(x,q)\\
			\zero	
		\end{array}
		\right),\quad x\in \R^n, \quad q\in \R^k,
		\]
		be $\hat \dn$-homogeneous of degree $\mu> -1$, where
		$
		\hat \dn(s) =\left(\begin{smallmatrix}\dn(s) & \zero \\\zero & \tilde \dn(s)\end{smallmatrix}\right), s\in \R.
		$
	\end{assumption}

		A certain dilation symmetry can be discovered for solutions of the perturbed homogeneous system as well (see, e.g., \cite{Polyakov2021:Aut}).
		
		\begin{lemma}\label{lem:hom_sol_pert}
			Let $x_{q}(t,x_0)$, $t\geq t_0$ denote a solution of the system \eqref{eq:p_hom_system}.
			Let $x_{q_s}(t,\dn(s)x_0)$, $t\geq t_0$ denote a solution of the system \eqref{eq:p_hom_system} 
			for $x_0\in \R^n$ replaced with $\dn(s)x_0\in\R^n,s\in \R$ and $q\in L^{\infty}(\R,\R^n)$  replaced with $q_s\in L^{\infty}(\R,\R^k)$ given by 
			\begin{equation}\label{eq:q_s}
			q_s(t)= \tilde \dn(s)q(t_0+e^{\mu s} (t-t_0)), \quad \forall t\in \R.
			\end{equation}
			Then, under Assumption \ref{as:2}, the  identity 
			\begin{equation}\label{eq:hom_sol_pert}
				x_{q_s}(t,\dn(s)x_0)\!=\!\dn(s)x_{q}(t_0+e^{\mu s}(t-t_0),x_0), \quad t\geq t_0,
			\end{equation}
			holds as long as solutions exist.
		\end{lemma}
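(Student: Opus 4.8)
The plan is to reduce Assumption~\ref{as:2} to a single pointwise scaling identity for $f$ and then verify directly that the right-hand side of \eqref{eq:hom_sol_pert} solves the time- and input-rescaled initial value problem. First I would unpack the $\hat\dn$-homogeneity of $\hat f$. Since $\hat\dn(s)=\left(\begin{smallmatrix}\dn(s)&\zero\\\zero&\tilde\dn(s)\end{smallmatrix}\right)$ is block diagonal and the second block of $\hat f$ vanishes, the defining identity $\hat f(\hat\dn(s)(x,q))=e^{\mu s}\hat\dn(s)\hat f(x,q)$ collapses to the single relation
\begin{equation*}
f(\dn(s)x,\tilde\dn(s)q)=e^{\mu s}\dn(s)f(x,q),\qquad \forall s\in\R,\ \forall x\in\R^n,\ \forall q\in\R^k.
\end{equation*}
This is the only consequence of Assumption~\ref{as:2} that the argument will use.

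Next I would introduce the affine time change $\tau(t):=t_0+e^{\mu s}(t-t_0)$, which satisfies $\tau(t_0)=t_0$ and $\dot\tau=e^{\mu s}>0$ and maps $[t_0,\infty)$ increasingly into $[t_0,\infty)$, and then set $y(t):=\dn(s)\,x_q(\tau(t),x_0)$. The claim reduces to showing that $y$ is a solution of \eqref{eq:p_hom_system} for the initial state $\dn(s)x_0$ and the input $q_s$ from \eqref{eq:q_s}. The initial condition is immediate, since $y(t_0)=\dn(s)x_q(t_0,x_0)=\dn(s)x_0$. For the dynamics, differentiating $y$ by the chain rule and using that $x_q(\cdot,x_0)$ solves $\dot x=f(x,q)$ gives
\begin{equation*}
\dot y(t)=\dn(s)\,f\big(x_q(\tau(t),x_0),\,q(\tau(t))\big)\,\dot\tau(t)=e^{\mu s}\dn(s)\,f\big(x_q(\tau(t),x_0),\,q(\tau(t))\big).
\end{equation*}
Applying the scaling identity with $x_q(\tau(t),x_0)$ and $q(\tau(t))$ in the roles of $x$ and $q$, and recognizing $q_s(t)=\tilde\dn(s)q(\tau(t))$ from \eqref{eq:q_s}, the right-hand side equals $f\big(\dn(s)x_q(\tau(t),x_0),\,\tilde\dn(s)q(\tau(t))\big)=f(y(t),q_s(t))$. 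Hence $y$ solves the transformed problem, which is exactly \eqref{eq:hom_sol_pert}; when solutions are unique this identifies $y$ with $x_{q_s}(\cdot,\dn(s)x_0)$, and in general it exhibits a bijection between the solution sets of the two problems.

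The main obstacle I anticipate is not algebraic but regularity-theoretic: since $q\in L^{\infty}$, solutions are understood in the Carath\'eodory sense, so the chain-rule computation above is only valid almost everywhere and must be justified for absolutely continuous trajectories. I would check that the affine substitution $t\mapsto\tau(t)$ preserves absolute continuity, that the a.e.\ identity $\dot x_q(\tau,x_0)=f(x_q(\tau,x_0),q(\tau))$ pulls back correctly under this change of variable, that $q_s\in L^{\infty}(\R,\R^k)$ (which holds because $\tilde\dn(s)$ is a bounded linear map and the time change is a bijection preserving essential bounds), and finally that the maximal interval of existence transforms consistently, so that the identity holds precisely \emph{as long as solutions exist}.
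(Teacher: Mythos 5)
Your proposal is correct and follows essentially the same route as the paper's proof: the same affine time change $\tau=t_0+e^{\mu s}(t-t_0)$, the same reduction of Assumption~\ref{as:2} to the pointwise identity $f(\dn(s)x,\tilde\dn(s)q)=e^{\mu s}\dn(s)f(x,q)$, and the same verification that the scaled trajectory solves the rescaled initial value problem with input $q_s$. Your added care about Carath\'eodory regularity and the non-uniqueness caveat (which the paper handles in the set-theoretic remark immediately after the lemma) only makes the argument more complete, not different.
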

  
		\begin{proof}
			Indeed, denoting $\tau=t_0+e^{\mu s}(t-t_0)$ we obtain
			\[
			\frac{d}{dt}\dn(s)x_{q}(t_0+e^{\mu s}(t-t_0),x_0)=e^{\mu s}\dn(s) \frac{d}{d\tau} x_q(\tau,x_0)=e^{\mu s}\dn(s)f(x_q(\tau),q(\tau)).
			\]
			Using the $\hat \dn$-homogeneity of $\hat f$ we derive
			\[
			\frac{d}{dt}\dn(s)x_{q}(t_0+e^{\mu s}(t-t_0),x_0)\!=\!f(\dn(s)x_{q}(t_0+e^{\mu s}(t-t_0),x_0), \tilde \dn(s)q(t_0+e^{\mu s}(t-t_0)))
			\]
			Since $\frac{d}{d t} x_{q_s}(t,\dn(s)x_0)=f(x_{q_s}(t,\dn(s)x_0),q_s(t))$, then  the identity 
			\eqref{eq:hom_sol_pert} holds.
		\end{proof}
	
	If solutions of the perturbed system \eqref{eq:p_hom_system} are not unique then the identity \eqref{eq:hom_sol_pert} is understood in the set-theoretic sense: \textit{the set of solutions is invariant up to certain scaling of initial condition, time and the perturbation function $q$}.

	\section{Homogeneous Systems on Cones}\label{sec:hom_ISS_on_omega}

	\subsection{Positively Invariant Homogeneous Cones}
	The criterion of the positive invariance of a \textit{closed} set $\Omega\subset \R^n$  is well known in the literature (see, e.g., \cite{Nagumno1942:PPMSJ}, \cite{Aubin1991:Book}, \cite{Blanchini1999:Aut}): \textit{if  the forward complete system 
		\eqref{eq:hom_system} has a unique solution for any $x_0\in \Omega$ then the closed set $\Omega$ is positively invariant for the system \eqref{eq:hom_system} if and only if 
		\begin{equation}\label{eq:inv_general}
			g(x)\in \mathcal{C}_{\Omega}(x), \quad \forall x\in \Omega,
		\end{equation}
		where $\mathcal{C}_{\Omega}(x)\subset \R^n$ is the (Bouligand)  tangent cone to $\Omega$ at $x\in \R^n$ defined as}
	\begin{equation}
		\mathcal{C}_{\Omega}(x)=\left\{z\in \R^n: \liminf_{h\to 0^+}\tfrac{\inf_{y\in \Omega}\|x+hz-y\|}{h}=0 \right\}.
	\end{equation}
	Obviously, $\mathcal{C}_{\Omega}(x)$ is a positive cone ($z\in \mathcal{C}_{\Omega}(x)\Rightarrow \lambda z\in \mathcal{C}_{\Omega}(x),\forall \lambda>0$).
	Notice that $\mathcal{C}_{\Omega}(x)=\{\emptyset\}$ for  $x\notin\Omega$ and $\mathcal{C}_{\Omega}(x)=\R^n$ for $x\in \interior \Omega$, so the condition \eqref{eq:inv_general} can be relaxed to $x\in \partial \Omega$.
	The homogeneity of the set $\Omega$ implies the homogeneity of the tangent cone $\mathcal{C}_{\Omega}(x)$.
	\begin{lemma}\label{lem:hom_tang_cone}
		If  $\Omega\subset \R^n$ is a closed $\dn$-homogeneous cone then 
		\begin{equation}
			\mathcal{C}_{\Omega}(\dn(s)x)=\dn(s)\mathcal{C}_{\Omega}(x),\quad \forall x\in \Omega, \quad \forall s\in \R.
		\end{equation}
	\end{lemma}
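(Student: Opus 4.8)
The plan is to use that, for each fixed $s\in\R$, the map $\dn(s)$ is an invertible linear operator on $\R^n$, its inverse being $\dn(-s)$ by the group property, and that it preserves $\Omega$. First I would strengthen the defining inclusion $\dn(s)\Omega\subset\Omega$ into the equality $\dn(s)\Omega=\Omega$: applying the inclusion with $-s$ gives $\dn(-s)\Omega\subset\Omega$, whence $\Omega=\dn(s)\dn(-s)\Omega\subset\dn(s)\Omega\subset\Omega$. This two-sided invariance is exactly what allows the infimum defining the distance to $\Omega$ to be reparametrized.

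Writing $d(w,\Omega)=\inf_{y\in\Omega}\|w-y\|$, I would then examine, for any $z\in\R^n$ and $h>0$, the quantity $d(\dn(s)x+h\,\dn(s)z,\Omega)$. Substituting $y=\dn(s)y'$ with $y'\in\Omega$ — legitimate precisely because $\dn(s)\Omega=\Omega$ — and factoring out the linear map gives
\begin{equation*}
d(\dn(s)x+h\,\dn(s)z,\Omega)=\inf_{y'\in\Omega}\|\dn(s)(x+hz-y')\|.
\end{equation*}
Since $\dn(s)$ and $\dn(-s)$ are bounded invertible linear operators, there are constants $0<c_1\le c_2$, depending only on $s$, such that $c_1\|w\|\le\|\dn(s)w\|\le c_2\|w\|$ for all $w\in\R^n$. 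Applying these inside the infimum and dividing by $h$ yields the sandwich
\begin{equation*}
c_1\,\frac{d(x+hz,\Omega)}{h}\le\frac{d(\dn(s)x+h\,\dn(s)z,\Omega)}{h}\le c_2\,\frac{d(x+hz,\Omega)}{h}.
\end{equation*}

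Taking $\liminf_{h\to0^+}$ and using that $c_1,c_2$ are fixed and positive, the middle expression has vanishing liminf if and only if $\liminf_{h\to0^+} d(x+hz,\Omega)/h=0$. By the definition of the tangent cone this says precisely that $\dn(s)z\in\mathcal{C}_\Omega(\dn(s)x)$ if and only if $z\in\mathcal{C}_\Omega(x)$; substituting $w=\dn(s)z$ then gives $\mathcal{C}_\Omega(\dn(s)x)=\dn(s)\mathcal{C}_\Omega(x)$, as required. The only step needing care is the upgrade to $\dn(s)\Omega=\Omega$, without which the infimum could not be reparametrized; once it is in place the remainder is a routine bounded-distortion estimate, requiring neither smoothness of $\partial\Omega$ nor uniqueness of solutions. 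A conceptually shorter route is available through the equivalent sequential description of $\mathcal{C}_\Omega(x)$ (there exist $h_n\downarrow0$ and $z_n\to z$ with $x+h_nz_n\in\Omega$): linearity and continuity of $\dn(s)$ together with $\dn(s)\Omega=\Omega$ immediately send such sequences to admissible ones for $\dn(s)x$, giving one inclusion, and replacing $s$ by $-s$ gives the other.
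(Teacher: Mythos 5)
Your proof is correct and follows essentially the same route as the paper: both perform the substitution $y=\dn(s)\tilde y$, $z=\dn(s)\tilde z$ in the defining $\liminf$, relying on the group property to upgrade $\dn(s)\Omega\subset\Omega$ to $\dn(s)\Omega=\Omega$ (a point the paper uses tacitly and you rightly make explicit). The only difference is cosmetic: where the paper cites the norm-independence of the Bouligand tangent cone and treats $\|\dn(s)\cdot\|$ as a new norm, you prove the same fact inline via the two-sided equivalence $c_1\|w\|\leq\|\dn(s)w\|\leq c_2\|w\|$ with $c_1=\|\dn(-s)\|^{-1}$ and $c_2=\|\dn(s)\|$, a valid bounded-distortion sandwich that correctly preserves the vanishing of the $\liminf$ in both directions.
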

 
	\begin{proof}
		On the one hand, we have  
		\[
		\mathcal{C}_{\Omega}(\dn(s)x)=\left\{z\in \R^n: \liminf_{h\to 0^+}\tfrac{\inf_{y\in \Omega}\|\dn(s)x+hz-y\|}{h}=0 \right\}
		\]
		\[
		=\left\{\dn(s)\tilde z\in \R^n: \liminf_{h\to 0^+}\tfrac{\inf_{\dn(s)\tilde y\in \Omega}\|\dn(s)(x+h\tilde z-\tilde y)\|}{h}=0 \right\}
		\]
		\[
		=\dn(s)\left\{\tilde z\in \R^n: \liminf_{h\to 0^+}\tfrac{\inf_{\tilde y\in \Omega}\|\dn(s)(x+h\tilde z-\tilde y)\|}{h}=0 \right\}
		\]
		where the $\dn$-homogeneity of $\Omega$ (i.e., $\dn(s)\Omega\subset\Omega,\forall s\in \R$) is utilized on the last step. On the other hand, the set $\mathcal{C}_{\Omega}(x)$ is independent of the selection of a norm \cite{Nagumno1942:PPMSJ}, \cite{Blanchini1999:Aut}.
		Since $\dn(s)$ is invertible then $\|\cdot \|_{*}=\|\dn(s)\cdot\|$ is a norm in $\R^n$ and 
		\[
		\mathcal{C}_{\Omega}(x)=\left\{\tilde z\in \R^n: \liminf_{h\to 0^+}\tfrac{\inf_{\tilde y\in \Omega}\|\dn(s)(x+h\tilde z-\tilde y)\|}{h}=0 \right\}.
		\]
		The proof is complete.	\end{proof}
	
	In the view of the latter lemma, the positive invariance of a homogeneous cone for a  homogeneous system can be analyzed considering the tangent cone and the vector field on the unit sphere.
	\begin{corollary}\label{cor:inv_cone}
		Let	a linear continuous dilation $\dn$ be monotone with respect to a norm $\|\cdot\|$ in $\R^n$ and a vector field  $g\in  C(\R^n)$ be  $\dn$-homogeneous of degree   $\mu\in \R$  such that $g(\zero)=\zero$, solutions of the system \eqref{eq:hom_system} are forward unique\footnote{A system is forward unique if any its solution is unique in the forward time.} and complete on $\Omega$.  	
		A closed $\dn$-homogeneous  cone $\Omega$  is positively invariant for  \eqref{eq:hom_system}
		if and only if 
		\begin{equation}\label{eq:hom_inv}
			g(x)\in   \mathcal{C}_{\Omega}(x), \quad \forall x\in\partial \Omega \cap S, 
		\end{equation}
		where $S=\{x\in \R^n: \|x\|=1\}$ is the unit sphere.
	\end{corollary}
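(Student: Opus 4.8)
The plan is to derive this from the general (Bouligand/Nagumo) invariance criterion already quoted above, and then to use homogeneity to collapse the test set from the whole boundary $\partial\Omega$ to the slice $\partial\Omega\cap S$. Under the standing hypotheses ($g\in C(\R^n)$, forward uniqueness and completeness on $\Omega$, $\Omega$ closed), that criterion asserts that $\Omega$ is positively invariant if and only if $g(x)\in\mathcal{C}_{\Omega}(x)$ for every $x\in\Omega$; since $\mathcal{C}_{\Omega}(x)=\R^n$ on $\interior\Omega$, this is equivalent to the same inclusion holding for every $x\in\partial\Omega$. So it suffices to show that the inclusion on all of $\partial\Omega$ is equivalent to the inclusion on $\partial\Omega\cap S$.

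Necessity is immediate because $\partial\Omega\cap S\subset\partial\Omega$. For sufficiency, first I would record two structural facts. Because $\dn(s)$ is a linear isomorphism and $\dn(s)\Omega=\Omega$ for all $s$ (the cone is $\dn$-homogeneous, so $\dn(-s)\Omega\subset\Omega$ forces equality), the map $\dn(s)$ sends $\partial\Omega$ onto $\partial\Omega$. And because $\dn$ is monotone, the map $s\mapsto\|\dn(s)x\|$ is a continuous strictly increasing bijection of $\R$ onto $(0,+\infty)$ for each $x\neq\zero$; equivalently, every $x\neq\zero$ factors uniquely as $x=\dn(s_x)y$ with $s_x=\ln\|x\|_{\dn}$ and $y=\dn(-\ln\|x\|_{\dn})x\in S$. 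Thus any nonzero boundary point $x\in\partial\Omega$ has a spherical representative $y=\dn(-s_x)x\in\partial\Omega\cap S$.

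The core step is the transfer computation at such a point. Writing $x=\dn(s_x)y$, the $\dn$-homogeneity of $g$ gives $g(x)=e^{\mu s_x}\dn(s_x)g(y)$, while Lemma \ref{lem:hom_tang_cone} gives $\mathcal{C}_{\Omega}(x)=\dn(s_x)\mathcal{C}_{\Omega}(y)$. Applying the inverse isomorphism $\dn(-s_x)$ to the inclusion $g(x)\in\mathcal{C}_{\Omega}(x)$ therefore turns it into $e^{\mu s_x}g(y)\in\mathcal{C}_{\Omega}(y)$, and since $\mathcal{C}_{\Omega}(y)$ is a positive cone and $e^{\mu s_x}>0$, this is equivalent to $g(y)\in\mathcal{C}_{\Omega}(y)$. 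Hence the condition at $x$ holds if and only if it holds at $y$, so the inclusion on $\partial\Omega\cap S$ already forces it on all of $\partial\Omega\setminus\{\zero\}$. The remaining point $x=\zero$ (should it lie on $\partial\Omega$) is automatic: $g(\zero)=\zero\in\mathcal{C}_{\Omega}(\zero)$, because the zero vector always belongs to the tangent cone at a point of $\Omega$.

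The only delicate points, rather than genuine obstacles, are the two structural facts invoked above: that monotonicity of $\dn$ yields the unique radial factorization $x=\dn(s_x)y$ with $y\in S$ (so that the unit sphere meets every dilation orbit in exactly one point), and that $\dn(s)$ preserves $\partial\Omega$. The former follows from the definition of the canonical homogeneous norm $\|\cdot\|_{\dn}$ together with Lemma \ref{lem:hom_norm}, and the latter from $\dn(s)$ being a homeomorphism with $\dn(s)\Omega=\Omega$; once they are in place, the equivalence reduces to the one-line scaling argument above.
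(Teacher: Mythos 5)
Your proof is correct and follows essentially the same route as the paper's: both invoke the Nagumo/Bouligand criterion and then use the $\dn$-homogeneity of $g$ together with Lemma~\ref{lem:hom_tang_cone} to transfer the inclusion from $\partial\Omega\cap S$ to all of $\partial\Omega\backslash\{\zero\}$ via the factorization $x=\dn(\ln\|x\|_{\dn})y$ with $y\in S$, absorbing the positive scalar $e^{\mu s_x}$ into the positive cone $\mathcal{C}_{\Omega}(y)$. The only cosmetic difference is at the origin: you note $\zero\in\mathcal{C}_{\Omega}(\zero)$ so the criterion applies on all of $\partial\Omega$ at once, whereas the paper applies the criterion on $\Omega\backslash\{\zero\}$ and then uses $g(\zero)=\zero$ together with the assumed forward uniqueness of the zero solution; both variants rest on the same hypotheses and are equally valid.
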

 
	\begin{proof}
		For any $x\neq \zero$, due to $\dn$-homogeneity of $g$ and $\mathcal{C}_{\Omega}$ (see  Lemma \ref{lem:hom_tang_cone}), we have 
		\[
		g(x)= \|x\|^{-\mu}_{\dn}\dn(\ln \|x\|_{\dn})g(\dn(-\ln\|x\|_{\dn})x)
		\text{ and } 
		\mathcal{C}_{\Omega}(x)= \dn(\ln \|x\|_{\dn})\mathcal{C}_{\Omega}(\dn(-\ln \|x\|_{\dn})x).
		\]
		Hence, the condition \eqref{eq:hom_inv} implies 
	$	
		g(x)\in   \mathcal{C}_{\Omega}(x), \quad \forall x\in\partial \Omega \backslash\{\zero\}, 
	$	
		which is necessary and sufficient for the set $\Omega\backslash\{\zero\}$ to be positively invariant 
		(at least, as long as $x(t)\neq \zero$). On the one hand, since the $\dn$-homogeneous cone is closed then  $\zero\in \Omega$. On the other hand, since  $g(\zero)=\zero$ then,
		taking into account the forward uniqueness of the zero  solution, we complete the proof. 
	\end{proof}
	
	The positive invariance of  a ``linear'' $\dn$-homogeneous cone $\Omega_{\rm lin}$ admits a more simple algebraic characterization. 
	\begin{corollary}\label{cor:inv_linear_cone}
		Let, under conditions of Corollary \ref{cor:inv_cone}, 
		the norm in $\R^n$ be defined as $\|x\|=\sqrt{z^{\top}Pz}$ with $P\in \R^{n\times n}$ satisfying \eqref{eq:mon_d_P}. The $\dn$-homogeneous cone $\Omega=\Omega_{\rm lin}$ given by \eqref{eq:Omega_lin} is positively invariant for the system \eqref{eq:hom_system} if and only if
		\begin{equation}\label{eq:inv_lin_cone}
			h_i^{\top} g(z)\geq \tfrac{z^{\top}Pg(z)}{z^{\top}PG_{\dn}z} h_i^{\top}G_{\dn}z , \quad \forall z\in \Xi_i, \quad \forall i=1,\ldots,p,
		\end{equation}
		where  $\Xi_i:=\{z\in \R^n: \|z\|=1,  h_i^{\top}z=0, h^{\top}_jz\geq 0, \forall j\neq i\}$.
	\end{corollary}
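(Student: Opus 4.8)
The plan is to specialize the homogeneous invariance criterion of Corollary \ref{cor:inv_cone} to the ``linear'' cone $\Omega_{\rm lin}$ by transporting the tangent-cone condition through the diffeomorphism $\Psi$ of \eqref{eq:coord_trans_Rn}. First I would record the geometry on the unit sphere $S=\{x:\|x\|=1\}$: for $x\in S$ one has $\|x\|=1$, hence $\|x\|_{\dn}=1$ and $\Psi(x)=\dn(0)x=x$ by \eqref{eq:coord_trans_Rn}. Consequently $\Omega_{\rm lin}\cap S=\{x\in S: h_i^{\top}x\ge 0,\ i=1,\dots,p\}$, the active index set of a boundary point $x\in\partial\Omega_{\rm lin}\cap S$ is $I(x)=\{i: h_i^{\top}x=0\}$, and $\partial\Omega_{\rm lin}\cap S=\bigcup_{i=1}^{p}\Xi_i$ with the equivalence $i\in I(x)\Leftrightarrow x\in\Xi_i$. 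Thus, by Corollary \ref{cor:inv_cone}, it suffices to test $g(x)\in\mathcal{C}_{\Omega_{\rm lin}}(x)$ at each $x\in\partial\Omega_{\rm lin}\cap S$, equivalently to test the constraint $i$ at each $x\in\Xi_i$.

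Next I would compute the tangent cone. Since $\|\cdot\|$ is the weighted Euclidean norm (smooth off the origin), Lemma \ref{lem:hom_norm} gives $\|\cdot\|_{\dn}\in C^1(\R^n\backslash\{\zero\})$ and $\Psi$ is a diffeomorphism of $\R^n\backslash\{\zero\}$. Writing $K=\{w\in\R^n: h_i^{\top}w\ge0,\ i=1,\dots,p\}$, we have $\Omega_{\rm lin}\backslash\{\zero\}=\Psi^{-1}(K\backslash\{\zero\})$, so the Bouligand cone transforms as $\mathcal{C}_{\Omega_{\rm lin}}(x)=\big(\tfrac{\partial\Psi(x)}{\partial x}\big)^{-1}\mathcal{C}_{K}(\Psi(x))$ for $x\ne\zero$. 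Because $K$ is polyhedral, $\mathcal{C}_{K}(w)=\{v: h_i^{\top}v\ge0,\ i\in I(w)\}$ holds with no constraint qualification even at multiply-active points, and since $\Psi(x)=x$ on $S$ this yields the clean equivalence: $g(x)\in\mathcal{C}_{\Omega_{\rm lin}}(x)$ iff $h_i^{\top}\tfrac{\partial\Psi(x)}{\partial x}g(x)=\tfrac{\partial\phi_i(x)}{\partial x}g(x)\ge0$ for every $i\in I(x)$, where $\phi_i:=h_i^{\top}\Psi$.

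It then remains to evaluate $\tfrac{\partial\phi_i}{\partial x}$ on $S$. Differentiating $\phi_i(x)=e^{s(x)}h_i^{\top}\dn(-s(x))x$ with $s(x)=\ln\|x\|_{\dn}$, and using \eqref{eq:hom_norm_der} together with $s=0$, $\dn(0)=I_n$ at $x=z\in S$, gives $\tfrac{\partial s}{\partial x}\big|_{z}=\tfrac{z^{\top}P}{z^{\top}PG_{\dn}z}$ (the denominator being positive by \eqref{eq:mon_d_P}) and, after collecting the three product-rule terms, $\tfrac{\partial\phi_i(x)}{\partial x}\big|_{z}=h_i^{\top}+(h_i^{\top}z-h_i^{\top}G_{\dn}z)\tfrac{z^{\top}P}{z^{\top}PG_{\dn}z}$. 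For $z\in\Xi_i$ the term $h_i^{\top}z$ vanishes, so $\tfrac{\partial\phi_i}{\partial x}\big|_{z}g(z)=h_i^{\top}g(z)-\tfrac{z^{\top}Pg(z)}{z^{\top}PG_{\dn}z}h_i^{\top}G_{\dn}z$, which is precisely the left-minus-right side of \eqref{eq:inv_lin_cone}. Requiring it to be nonnegative for all $z\in\Xi_i$ and all $i$ is therefore equivalent to the tangent-cone condition collected in the previous paragraph, completing the proof. As a sanity check, Euler's identity of Theorem \ref{thm:Euler} applied to the degree-one function $\phi_i$ forces $\tfrac{\partial\phi_i}{\partial x}\big|_z G_{\dn}z=\phi_i(z)=0$ on $\Xi_i$, consistent with the displayed formula.

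The main obstacle will be the tangent-cone bookkeeping rather than the calculus: one must justify the transformation $\mathcal{C}_{\Omega_{\rm lin}}(x)=\big(\tfrac{\partial\Psi(x)}{\partial x}\big)^{-1}\mathcal{C}_{K}(\Psi(x))$ and, crucially, the fact that for the polyhedral $K$ the linearized cone equals the true Bouligand cone even when several $h_i$ are simultaneously active, so that no extra constraint qualification on the $\phi_i$ is needed. The implicit differentiation of the homogeneous norm is a secondary technicality dispatched by \eqref{eq:hom_norm_der}, and the origin is handled entirely within Corollary \ref{cor:inv_cone} via $g(\zero)=\zero$ and forward uniqueness, so all computations above legitimately take place on $\R^n\backslash\{\zero\}$.
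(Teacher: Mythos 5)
Your proof is correct and is essentially the paper's argument in pullback form: where the paper pushes the vector field forward through $\Psi$ to obtain the transformed system \eqref{eq:z_system_g} and applies the sphere criterion \eqref{eq:hom_inv} to the resulting standard-homogeneous system on the linear cone \eqref{eq:tilde_Omega_lin}, you pull the Bouligand cone back through $\tfrac{\partial \Psi(x)}{\partial x}$, and since $h_i^{\top}\tfrac{\partial\Psi(x)}{\partial x}g(x)$ equals $h_i^{\top}\tilde g(x)$ on $S$, your gradient computation of $\tfrac{\partial \phi_i}{\partial x}\big|_{z}g(z)$ reproduces the paper's $h_i^{\top}\tilde g$ term by term on $\Xi_i$. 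The only substantive difference is expository: you make explicit two steps the paper leaves implicit---the transport of tangent cones under a $C^1$ diffeomorphism and the fact that the Bouligand cone of a polyhedral cone equals its linearization at multiply-active points with no constraint qualification---which tightens the argument without changing it.
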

 
	\begin{proof}
		In the new coordinates
		$
		z=\Psi(x)
		$, 
		the system \eqref{eq:hom_system} becomes \cite{Polyakov2018:RNC}
		\begin{equation}\label{eq:z_system_g}
			\dot z=\|z\|^{1+\mu}\tilde g \left(\tfrac{z}{\|z\|}\right), 
		\end{equation}
		where $
		\tilde g(\tilde x)=\tfrac{\tilde x^{\top}Pg(\tilde x)}{\tilde x^{\top}PG_{\dn}\tilde x}(I_n-G_{\dn})\tilde x+g\left(\tilde x\right),\tilde x\in \R^{n}\backslash\{\zero\}.	$ 	
		By Corollary \ref{cor:hom_lin_cone}, the $\dn$-homogeneous cone $\Omega_{\rm lin}$ in the new coordinates becomes the conventional linear cone:
		\begin{equation}\label{eq:tilde_Omega_lin}
		\tilde \Omega_{\rm lin}=\{z\in \R^n : h_i^{\top}z\geq 0, i=1,\ldots,n\}.
		\end{equation}
		Notice that $\Xi_i=\{z\in \partial \Omega_{\rm lin}\cap S : h_i^{\top}z=0\}$.
		The system \eqref{eq:z_system_g} and the linear cone $\tilde \Omega_{\rm lin}$ are standard homogeneous, so 
		using the criterion \eqref{eq:hom_inv} and 
		taking into account $\partial \tilde \Omega_{\rm lin}\cap S=\partial \Omega_{\rm lin}\cap S=\Xi_1\cup\ldots\cup \Xi_p$ we derive 
		the following  necessary and sufficient condition of the positive invariance of the  cone  $\tilde \Omega_{\rm lin}$:
	$ 
		\tilde g(x)\in \mathcal{C}_{\Omega}(x),  \forall x\in \Xi_i,  \forall i=1,\ldots,p.
	$ 
		The latter is  equivalent to
	$ 
		h_i^{\top}\tilde g(x)\geq 0, \forall x\in \Xi_i,  \forall i=1,\ldots,p.
	$ 
		Taking into account $h_i^{\top}x=0,\forall x\in \Xi_i$ we derive \eqref{eq:inv_lin_cone}.
	\end{proof}

	The requirement of the uniqueness of solutions of \eqref{eq:hom_system} is fundamental \cite{Blanchini1999:Aut}
	for the characterization of the positive invariance
	of the closed set $\Omega$. It  can be guaranteed, for example, asking $g\in C^1(\Omega)$ (or Lipschitz continuity on $\Omega$). However, in this case, the homogeneous vector field may have only non-negative homogeneity degree. Below we assume  $g\in C^1(\Omega\backslash\{\zero\})$ in order to include the finite-time stable homogeneous systems into considerations. Since the regularity of the vector field at $\zero$ is excluded we would need an additional assumption about  the zero solution.

	\begin{corollary}\label{cor:inv_sufficient}
		Let a linear continuous dilation $\dn$ in $\R^n$ be strictly monotone with respect to the norm $\|x\|=\sqrt{x^{\top}P x}$. Let a non-constant vector field $g\in C(\R^ n)\cap C^1(\Omega\backslash \{\zero\})$ be $\dn$-homogeneous of degree $\mu\leq 0$ such that the matrix $\mu I_n+G_{\dn}$ is invertible and the system \eqref{eq:hom_system} has the unique zero solution.  Let the  $\dn$-homogeneous  cone $\Omega=\Omega_{\rm lin}$ be given by the formula \eqref{eq:Omega_lin} with $p=n$ and the  matrix 
		\begin{equation}\label{eq:H}
			H=\left(
			\begin{smallmatrix}
				h_1^{\top}\\
				\cdots\\
				h_n^{\top}\\	
			\end{smallmatrix}
			\right)\in \R^{n\times n}
		\end{equation}
		be invertible.	The cone $\Omega_{\rm lin}$ 
		is positively invariant for the system \eqref{eq:hom_system} if 
		\begin{equation}\label{eq:M}
			M(x)=H\!\left(\tfrac{x^{\top}Pg(x)}{x^{\top}PG_{\dn}x}(I_n-G_{\dn}) +(\mu I_n+G_{\dn})^{-1}\tfrac{\partial g(x)}{\partial x}G_{\dn}\right)H^{-1}
		\end{equation} 
		is a Metzler\footnote{A matrix $M\in\R^{n\times n}$ is Metzler if all its off-diagonal elements are nonnegative.} matrix for any $x\in \partial \Omega_{\rm lin}\cap S$.
	\end{corollary}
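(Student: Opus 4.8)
The plan is to reduce the sufficient condition to the necessary-and-sufficient algebraic criterion \eqref{eq:inv_lin_cone} of Corollary~\ref{cor:inv_linear_cone}. Since that criterion was derived under the standing hypotheses of Corollary~\ref{cor:inv_cone} (a continuous vector field with forward unique and complete solutions on $\Omega$), the first task is to verify these regularity prerequisites for the present, weaker assumption $g\in C(\R^n)\cap C^1(\Omega\backslash\{\zero\})$. Forward completeness follows at once from Corollary~\ref{cor:forward_mu_neg}, because $g\in C(\R^n)$ is $\dn$-homogeneous of degree $\mu\leq 0$. Forward uniqueness holds on $\Omega\backslash\{\zero\}$ because $g$ is there $C^1$ and hence locally Lipschitz; the hypothesis that \eqref{eq:hom_system} possesses a unique zero solution then supplies both $g(\zero)=\zero$ (so that $\zero$ is an equilibrium and Corollary~\ref{cor:inv_cone} is applicable) and uniqueness through the origin, since any trajectory is unique up to its first hitting time of $\zero$ and thereafter coincides with the zero solution. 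With these facts in hand, \eqref{eq:inv_lin_cone} is necessary and sufficient for the positive invariance of $\Omega_{\rm lin}$.

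It remains to show that the Metzler property of $M(x)$ on $\partial\Omega_{\rm lin}\cap S$ implies \eqref{eq:inv_lin_cone}. Writing $a(x)=\tfrac{x^{\top}Pg(x)}{x^{\top}PG_{\dn}x}$, I first note that, after using $h_i^{\top}x=0$ on the face $\Xi_i$, the inequality \eqref{eq:inv_lin_cone} is exactly $h_i^{\top}\tilde g(x)\geq 0$, where $\tilde g(x)=a(x)(I_n-G_{\dn})x+g(x)$ is the field appearing in the transformed system \eqref{eq:z_system_g}. The crucial observation is that the matrix $M(x)$ sends $Hx$ to $H\tilde g(x)$: by the Euler identity for $\dn$-homogeneous vector fields (Theorem~\ref{thm:Euler}), $\tfrac{\partial g(x)}{\partial x}G_{\dn}x=(\mu I_n+G_{\dn})g(x)$, so the invertibility of $\mu I_n+G_{\dn}$ gives $(\mu I_n+G_{\dn})^{-1}\tfrac{\partial g(x)}{\partial x}G_{\dn}x=g(x)$. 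Substituting this together with $(I_n-G_{\dn})x$ into \eqref{eq:M} yields the key identity
\begin{equation*}
M(x)\,Hx=H\big(a(x)(I_n-G_{\dn})x+g(x)\big)=H\tilde g(x), \qquad x\in\partial\Omega_{\rm lin}\cap S .
\end{equation*}

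The conclusion is now a sign-pattern argument on the nonnegative orthant $\{Hx\geq 0\}$. Fix any $i$ and any $x\in\Xi_i$; then $(Hx)_i=h_i^{\top}x=0$ while $(Hx)_j=h_j^{\top}x\geq 0$ for $j\neq i$, so the $i$-th component of the identity above reads
\begin{equation*}
h_i^{\top}\tilde g(x)=\big(M(x)Hx\big)_i=\sum_{j\neq i}M_{ij}(x)\,(Hx)_j .
\end{equation*}
If $M(x)$ is Metzler, every off-diagonal entry $M_{ij}(x)$ with $j\neq i$ is nonnegative, and since $(Hx)_j\geq 0$ each summand is nonnegative; hence $h_i^{\top}\tilde g(x)\geq 0$, which is precisely \eqref{eq:inv_lin_cone}. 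As $i$ and $x\in\Xi_i$ were arbitrary and $\partial\Omega_{\rm lin}\cap S=\Xi_1\cup\cdots\cup\Xi_n$, the criterion holds on the whole boundary, so $\Omega_{\rm lin}$ is positively invariant.

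I expect the main obstacle to be the verification of the regularity prerequisites rather than the algebra: arguing forward uniqueness through the origin for a field that is only $C^1$ on $\Omega\backslash\{\zero\}$ and may be finite-time convergent when $\mu<0$ is exactly where the isolated hypotheses (unique zero solution, $\mu\leq 0$ for completeness, and invertibility of $\mu I_n+G_{\dn}$) are consumed. The algebraic heart, by contrast, is a one-line consequence of Euler's identity once the relation $M(x)Hx=H\tilde g(x)$ is spotted; it also makes transparent why the condition is merely sufficient, since the Metzler requirement constrains the entire off-diagonal sign pattern of $M(x)$, whereas on each face $\Xi_i$ only the single scalar inequality $h_i^{\top}\tilde g(x)\geq 0$ is actually needed.
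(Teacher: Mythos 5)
Your proof is correct and takes essentially the same route as the paper's: the same regularity bookkeeping (forward completeness from Corollary~\ref{cor:forward_mu_neg}, uniqueness from $C^1$ regularity away from the origin plus the assumed unique zero solution) and the same algebraic core, namely Euler's identity $(\mu I_n+G_{\dn})^{-1}\tfrac{\partial g(x)}{\partial x}G_{\dn}x=g(x)$, which identifies $M(x)Hx=H\tilde g(x)$. The only difference is one of packaging: where the paper passes to $y=Hz$ and invokes positivity of the system $\dot y=\|z\|^{\mu}M(x)y$ for Metzler $M$, you verify the face-wise boundary inequality $h_i^{\top}\tilde g(x)\geq 0$ on each $\Xi_i$ directly via the criterion \eqref{eq:inv_lin_cone} of Corollary~\ref{cor:inv_linear_cone} --- the same fact, checked by hand.
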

 
	\begin{proof}
		The system \eqref{eq:hom_system}  is forward complete in  view of Corollary \ref{cor:forward_mu_neg}. Since $g\in C^1(\R^n\backslash\{\zero\})$ then its solutions are unique on $\R^n\backslash\{\zero\}$. Uniqueness of the zero solution is guaranteed by the assumption of the corollary.
		
		Let us consider the equivalent system \eqref{eq:z_system_g}.
		Since  $\mu I_{n}+G_{\dn}$ is invertible then using Theorem \ref{thm:Euler} we rewrite \eqref{eq:z_system_g} as follows
		\begin{equation}\label{eq:z_system_g2}
			\dot z=\|z\|^{1+\mu}\left(\tilde \gamma\left(\tfrac{z}{\|z\|}\right)(G_{\dn}-I_n)+(\mu I_n+G_{\dn})^{-1}\left.\tfrac{\partial g(x)}{\partial x}\right|_{x=\frac{z}{\|z\|}}G_{\dn}\right)\tfrac{z}{\|z\|},
		\end{equation}
		where $\tilde \gamma(x)=\tfrac{x^{\top}Pg(x)}{x^{\top}PG_{\dn}x}, x\neq \zero$.
		Since the matrix $H$ is invertible then the invariance of $\tilde \Omega_{\rm lin}$ is equivalent to the positivity of the system
		\begin{equation}\label{eq:y_system_g}
			\dot y=\|z\|^{\mu}\left.M(x)\right|_{x=\frac{z}{\|z\|}} y, \quad y=Hz.
		\end{equation}
		The latter system is positive if $M(x)$ is a Metzler matrix for any $x\in S\cap \partial \tilde \Omega_{\rm lin}$.
		Taking into account $S\cap \partial \tilde \Omega_{\rm lin}=S\cap  \partial \Omega_{\rm lin}$
		we complete the proof.
	\end{proof}
	
	\subsection{Stability Analysis on Homogeneous Cones}
	Using  Lemma \ref{cor:inv_cone} we extend the Zubov-Rosier Theorem   \cite{Zubov1958:IVM}, \cite{Rosier1992:SCL}
to systems on homogeneous cones. 
	\begin{theorem}\label{thm:Zubov_Rosier_on_Omega}
		Let	a linear continuous dilation $\dn$ be monotone with respect to a norm $\|\cdot\|$ in $\R^n$, Assumptions \ref{as:1}  and \ref{as:1bis} be fulfilled and a vector field  $g\in C(\R^n)\cap C^1(\Omega\backslash\{\zero\})$ be  $\dn$-homogeneous of degree   $\mu\in \R$  such that  the system \eqref{eq:hom_system} has the unique zero solution. Then the system \eqref{eq:hom_system}   is uniformly asymptotically stable on $\Omega$ if and only if the  condition \eqref{eq:hom_inv} is fulfilled and there exists a $\dn$-homogeneous function $V:\R^n\mapsto \R_+$ 
		of degree 1  satisfying
		\begin{itemize}
			\item[1)] $V\!\in\! C(\Omega)\!\cap\! C^1(\Omega\backslash\{\zero\})$ and  there exist $0<k_1\leq k_2<\infty$ such that 
			\[
			k_1\|x\|_{\dn}
			\leq V(x)\leq k_2\|x\|_{\dn}, \quad \forall  x\in \Omega;
			\]
			\item[2)] there exist a $\dn$-homogeneous function $W:\R^n\mapsto \R$ of degree $\mu+1$ such that $W\in C(\Omega\backslash\{\zero\})$, $W(x)>0,\forall x\in \Omega\backslash\{\zero\}$ and 
			\[
			\dot V(x)\leq -W(x), \quad \forall x\in \Omega\backslash\{\zero\}.
			\]
		\end{itemize}
	\end{theorem}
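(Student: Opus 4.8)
The plan is to prove both implications by reducing the set-stability on $\Omega$, which is phrased through the barrier functions $\phi_i$, to the decay of the canonical homogeneous norm $\|\cdot\|_{\dn}$, and then to invoke a homogeneous comparison argument for sufficiency and a homogenized converse Lyapunov construction for necessity.

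\textbf{Sufficiency.} Assume \eqref{eq:hom_inv} holds and that $V,W$ with properties 1) and 2) exist. First I would establish positive invariance of $\Omega$ and forward completeness on $\Omega$. Forward uniqueness on $\Omega\setminus\{\zero\}$ follows from $g\in C^1(\Omega\setminus\{\zero\})$, and at the origin from the assumed uniqueness of the zero solution. The tangent-cone condition \eqref{eq:hom_inv}, extended to all of $\partial\Omega\setminus\{\zero\}$ by $\dn$-homogeneity of $g$ and of $\mathcal{C}_\Omega$ exactly as in the proof of Corollary~\ref{cor:inv_cone} (via Lemma~\ref{lem:hom_tang_cone}), guarantees via Nagumo's viability theorem that every solution starting in $\Omega$ remains in $\Omega$ on its maximal interval of existence; along such a solution $\dot V\le -W\le 0$, so $V(x(t))\le V(x_0)$ and hence $\|x(t)\|_{\dn}\le V(x_0)/k_1$ by property 1), which rules out finite-time blow-up and yields forward completeness. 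The left inequality $0\le\phi_i(x(t))$ in \eqref{eq:GAS} is then immediate from $x(t)\in\Omega$.

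For the decay I would exploit homogeneity to turn $\dot V\le -W$ into a scalar comparison inequality. The set $\{x\in\Omega:V(x)=1\}$ is compact (it is closed and, by property 1), contained in the annulus $1/k_2\le\|x\|_{\dn}\le 1/k_1$) and avoids the origin, so $W$ attains there a minimum $c>0$; since $V$ and $W$ are $\dn$-homogeneous of degrees $1$ and $\mu+1$, this gives $W(x)\ge c\,V(x)^{\mu+1}$ on $\Omega$, whence $\dot V\le -c\,V^{\mu+1}$ along trajectories. Integrating this comparison inequality produces $V(x(t))\le\beta_0(V(x_0),t-t_0)$ with $\beta_0\in\mathcal{KL}$ (exponential for $\mu=0$, polynomial for $\mu>0$, finite-time for $-1<\mu<0$). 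Finally I would convert back to the $\phi_i$: writing $x=\dn(\ln\|x\|_{\dn})\bar x$ with $\bar x\in S\cap\Omega$ and using $\dn$-homogeneity of $\phi_i$ of degree $\nu_i$ gives $\phi_i(x)\le C_i\|x\|_{\dn}^{\nu_i}\le C_i(V(x)/k_1)^{\nu_i}$ with $C_i=\max_{S\cap\Omega}\phi_i$; combining with the comparison bound, with $V(x_0)\le k_2\|x_0\|_{\dn}$, and with the norm equivalence of Lemma~\ref{lem:hom_norm}(2) (and of $\|\cdot\|$ with $|\cdot|$) yields the right inequality of \eqref{eq:GAS} for suitable $\beta_i\in\mathcal{KL}$.

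\textbf{Necessity.} Conversely, assume uniform asymptotic stability on $\Omega$. The left inequality of \eqref{eq:GAS} shows $x(t)\in\Omega$ for all $t\ge t_0$, i.e. $\Omega$ is positively invariant; moreover $\phi_i(x(t))\to0$ for all $i$ forces $\|x(t)\|_{\dn}\to0$, because under Assumptions~\ref{as:1}--\ref{as:1bis} one has $\max_i\phi_i\ge c>0$ on the compact set $S\cap\Omega$ (otherwise some unit-norm point would lie in $\Omega_0=\{\zero\}$), and homogeneity then bounds $\max_i\phi_i(x)$ below whenever $\|x\|_{\dn}$ stays bounded away from $0$. Hence solutions are bounded and forward complete, and Corollary~\ref{cor:inv_cone} yields \eqref{eq:hom_inv}. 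For the Lyapunov function I would first apply a converse Lyapunov theorem for continuous forward-unique vector fields on the positively invariant closed cone $\Omega$ to get a smooth positive-definite proper $V_1$ with $\dot V_1<0$ on $\Omega\setminus\{\zero\}$, and then homogenize it by the dilation-symmetry technique of Zubov and Rosier~\cite{Zubov1958:IVM,Rosier1992:SCL} (for instance by averaging $V_1$ over the group $\dn$), obtaining a $\dn$-homogeneous $V$ of degree $1$ that is $C^1$ on $\Omega\setminus\{\zero\}$, continuous on $\Omega$, and satisfies the two-sided bound of property 1); setting $W:=-\dot V$, Euler's identity (Theorem~\ref{thm:Euler}) shows $W$ is $\dn$-homogeneous of degree $\mu+1$, and preservation of the sign of $\dot V_1$ under homogenization gives $W>0$ on $\Omega\setminus\{\zero\}$, which is property 2).

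\textbf{Main obstacle.} The delicate point is the necessity construction. The classical converse theorem and Rosier's homogenization are stated on $\R^n$ or on neighborhoods of the origin, whereas here the origin sits on the boundary $\Omega_0=\{\zero\}$ of the invariant cone and $g$ is merely continuous at $\zero$. I therefore expect the real work to be (i) producing the intermediate $V_1$ relative to the closed invariant set $\Omega$ rather than an open set, and (ii) verifying that homogenization preserves both the strict sign of $W$ and the $C^1$ regularity up to $\partial\Omega\setminus\{\zero\}$, where trajectories may be tangent to the cone; the strict positivity of $W$ on $\partial\Omega$ is the most technical step.
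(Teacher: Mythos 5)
Your proposal is correct and takes essentially the same route as the paper: sufficiency via the homogeneous tangent-cone invariance criterion (Corollary~\ref{cor:inv_cone}), the comparison bound $W(x)\ge c\,V^{1+\mu}(x)$ (cf.\ Remark~\ref{rem:fts}), and the homogeneity estimate $\phi_i(x)\le c_i\|x\|_{\dn}^{\nu_i}$; necessity via a Massera-type converse construction on the invariant cone followed by Rosier's dilation averaging, which is exactly the paper's $V_0(x_0)=\int_0^\infty a(|x(\tau,x_0)|)\,d\tau$ followed by $V(x)=\int_0^{+\infty}e^{-s}b\left(V_0(\dn(s)x)\right)ds$. The ``main obstacle'' you flag---constructing the intermediate function relative to the closed cone and preserving strict positivity of $W$ under homogenization---is precisely what the paper resolves through the choice of the cutoff functions $a$ and $b$ and the radii $r_1,r_2$, so your plan matches the actual proof in every essential step.
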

 
	\begin{proof}
		\textit{Sufficiency}. By Corollary \ref{cor:inv_cone},  any solution $x(t), t\geq 0$ initiated in $\Omega$ belongs to $\Omega$ 
		and 
		\[
		0\leq \phi_i(x(t))
		\]
		as long as $|x(t)|<+\infty$. Using  the Lyapunov function $V$, we conclude (in the usual way) that $\exists \tilde \beta\in \mathcal{KL}$ such that  $|x(t)|\leq \tilde \beta(|x_0|,t)$ for all $t\geq 0$ and for all $x(0)=x_0\in \Omega$.	
		Since $\phi_i$ is $\dn$-homogeneous of degree $\nu_i>0$ then 
		\begin{equation}\label{eq:phi2x}
			0\leq \phi(x(t))=\|x(t)\|_{\dn}^{\nu_i}\phi(\dn(-\ln \|x(t)\|)x(t))\leq c_i \|x(t)\|_{\dn}^{\nu_i}
		\end{equation}
		as long as $|x(t)|\neq \zero$, where $c_i=\sup_{\|z\|=1}\phi_i(z)<+\infty$ due to continuity of $\phi_i$.
		Using \eqref{eq:rel_norm_and_hom_norm_Rn} and the equivalence of the norms $|\cdot|$ and $\|\cdot\|$ we derive \eqref{eq:GAS}.

		\textit{Necessity.}  
		The inequality \eqref{eq:GAS}  guarantees that $\Omega$ is  positively invariant for the system \eqref{eq:hom_system}, so the inclusion  \eqref{eq:hom_inv} holds. Since $\Omega_0=\{\zero\}$ then 
		$
		x(t)\to \zero \quad \Leftrightarrow \quad x(t)\to \Omega_0 
		$
		and the inequality \eqref{eq:GAS}  implies that the origin is the asymptotically stable equilibrium of the system \eqref{eq:hom_system} on $\Omega$. Let $a\in C^{\infty}(\R)$ such that $a(\rho)=0$ for $\rho\in [0,0.5])$, $a'(\rho)>0$ for $\rho\in (0.5,1)$ 
		and $a(\rho)={\rm const}$ for $\rho\geq 1$. Inspired by \cite{Massera1949:AM}, let us consider  the function
		\[
		V_{0}(x_0)=\int^{\infty}_0a(|x(\tau,x_0)|)d \tau,
		\]
		where $x(\cdot,x_0)$ denotes the unique solution of the system \eqref{eq:hom_system}. Due to asymptotic stability of the system on $\Omega$, the function $V$ is well defined and locally  bounded on $\Omega$. Since $g\in C^1(\Omega\backslash\{\zero\})$ then  solutions depend smoothly on initial conditions
		\cite{CoddingtonLevinson1955:Book}, so $V_0\in C(\Omega)\cap C^1(\Omega\backslash\{\zero\})$. 
		Using the semi-group property of solutions we derive 
		\[
		V_{0}(x(t+h,x_0))-V_{0}(x(t,x_0))\!=\!\int^{\infty}_0\!\!\!\!\!\!a(|x(\tau+h, x(t,x_0))|)d \tau-\!\int^{\infty}_0\!\!\!\!\!\!a(|x(\tau,x(t,x_0))|)d \tau
		\]
		\[
		=-\int^h_0a(|x(\tau,x(t,x_0))|)d \tau, \quad \forall h>0
		\]
		and 
		$$
		\tfrac{dV_{0}(x(t,x_0))}{dt}=\left.\tfrac{\partial V(z)}{\partial z} g(z)\right|_{z=x(t,x_0)}=-a(|x(t,x_0)|)\quad  \text{ if } \quad x(t,x_0)\in \Omega\backslash\{\zero\}.
		$$
		Moreover,  there exists $c_1>0$ such that $V_0(x_0)\leq c_1$ for $|x_0|\leq 1, x_0\in \Omega$ and 
		there exist $c_2>c_1$ and $r>1$ such that  $x_0\in \Omega, |x_0|>r \Rightarrow V_{0}(x_0)\ge c_2$.
		
		Let $b\in C^{\infty}(\R)$ such that $b(\rho)=0$ for $\rho\in [0,c_1])$, $b'(\rho)>0$ for $\rho\in (c_1,c_2)$ 
		and $b(\rho)={\rm const}$ for $\rho\geq c_2$. 
		Since $\Omega$ is the $\dn$-homogeneous cone then the function \cite{Rosier1992:SCL}
		\[
		V(x)=\int^{+\infty}_0 e^{-s} b(V_0(\dn(s)x))ds
		\]
		is well-defined for all $x\in \Omega$. By construction, $V(\dn(\tau)x)=e^{\tau}V(x),\forall x\in \Omega$ and $V\in C(\Omega)\cap C^1(\Omega\backslash\{\zero\})$.
		Using Lemma \ref{lem:hom_norm} we conclude  $x\in \Omega,  \|\dn(s)x\|_{\dn}\leq r_1 \Rightarrow |\dn(s)x|\leq 1$ for some $r_1>0$ and $ x\in \Omega, \|\dn(s)x\|_{\dn}\geq r_2 \Rightarrow |\dn(s)x|\geq r>1$ for some $r_2\geq \max\{1,r_1\}$.
		Hence, the following representation
		\[
		V(x)=\int^{\ln\frac{r_2}{\|x\|_{\dn}}}_{\ln\frac{r_1}{\|x\|_{\dn}}} \!\!\!e^{-s} b(V_0(\dn(s)x))ds+
		\int^{+\infty}_{\ln\frac{r_2}{\|x\|_{\dn}}} \!\!\!e^{-s} b(c_2)ds 
		\]
		holds and $ \tfrac{b(c_2)}{r_2}\|x\|_{\dn}\leq V(x)\leq \tfrac{b(c_2)}{r_1}\|x\|_{\dn}$.
		Using $\dn$-homogeneity  of the vector field $g$ we derive
		\[
		\frac{\partial V}{\partial x}g(x)=\int^{+\infty}_0 e^{-s} b'(V_0(\dn(s)x))\left.\tfrac{\partial V_0(z)}{\partial z}\right|_{z=\dn(s)x}\dn(s) g(x)ds=
		\]
		\[
		\int^{+\infty}_0 \!\!\!\!e^{-(1+\mu)s} b'(V_0(\dn(s)x))\!\left.\tfrac{\partial V_0(z)}{\partial z} g(z)\right|_{z=\dn(s)x}\!\!\!\!ds\!=
		\]
		\[
		-
		\int^{\ln\frac{r_2}{\|x\|_{\dn}}}_{\ln\frac{r_1}{\|x\|_{\dn}}} \!\!e^{-(1+\mu)s}  b'(V_0(\dn(s)x)) a(|\dn(s)x|) ds=-W(x).
		\]
		By construction, $W\in C(\Omega\backslash\{\zero\})$, $b'(V_0(\dn(s)x))>0$ for $\ln\frac{r_1}{\|x\|_{\dn}}\leq s<\ln\frac{r_2}{\|x\|_{\dn}}$ and $a(|\dn(s)x|)=a(1)>0$ for $\ln\frac{1}{\|x\|_{\dn}}\leq s<\ln\frac{r_2}{\|x\|_{\dn}}$. Hence,  $W(x)>0$ for any $x\in \Omega\backslash\{\zero\}$.  Obviously, $W(\dn(\tau)x)=e^{\tau(\mu+1)}W(x),\forall \tau\in \R, \forall x\in \Omega\backslash\{\zero\}$. Assigning $V(\zero)=W(\zero)=V(x)=W(x)=0$ for $x\in \R\backslash\Omega$ we complete the proof.
	\end{proof}
	\begin{remark}\label{rem:fts}
		Since $W$ is $\dn$-homogeneous of degree $1+\mu$ then $\exists \rho>0$ such that
		$W(x)=W(\dn(-\ln \|x\|_{\dn})x) V^{1+\mu}(x)\geq \rho V^{1+\mu}(x)$. Hence, $\dot V(x)\leq -\rho V^{1+\mu}(x)$ for $x\neq \zero$. If $\mu<0$ then  the system is finite-time stable \cite{BhatBernstein2005:MCSS}:
		$x(t)=\zero$ for all $t\geq \frac{V^{-\mu} (x_0)}{(-\mu)\rho}$.
	\end{remark}
	\begin{remark}
		  For the ``linear'' $\dn$-homogeneous cone $\Omega=\Omega_{\rm lin}$ given by \eqref{eq:Omega_lin}, the condition \eqref{eq:hom_inv}  can be replaced with the inequality \eqref{eq:inv_lin_cone} due to Corollary \ref{cor:hom_lin_cone}.
	\end{remark}

		\subsection{Robustly Positively Invariant Homogeneous Cones}
	
	The positive invariance of the system \eqref{eq:p_hom_system} with perturbation  $q\in L^{\infty}(\R,\R^k)$  can be studied similarly provided that the solutions of the system are  forward unique and complete. In this case, the  criterion of  the robust positive invariance\footnote{ The set $\Omega$ is said to be robustly positively invariant  for system \eqref{eq:p_hom_system}  if $x(t_0)\in \Omega \Rightarrow x(t)\in \Omega$ for any $q\in L^{\infty}(\R,\R^k) : q(t)\in \mathcal{Q}\subset \R^k$ and for all $t\geq t_0$.} becomes \cite{Aubin1991:Book}, \cite{Blanchini1999:Aut}:
	\begin{equation}\label{eq:p_inv}
		f(x,q)\in C_{\Omega}(x), \quad \forall x\in \partial \Omega, \quad \forall q\in \R^n.
	\end{equation}

		For perturbed homogeneous system \eqref{eq:p_hom_system}, the  uniqueness of solutions at zero may be a rather conservative assumption, which can be slightly relaxed as follows.
		
		\begin{assumption}\label{as:3}
			For any $q\in L^{\infty}(\R,\R^k)$ and for any $x_0\in \partial \Omega\backslash \{\zero\}$, a solution the system \eqref{eq:p_hom_system} is forward unique (at least, locally in time) but a solution initiated as $x(t_0)=\zero$  may leave the origin only  by entering the set $\Omega$.   
		\end{assumption} 
  
		 The simplest example of the standard homogeneous satisfying  this assumption is $\dot x=|q||x|^{1/3}$ with $\Omega=\{x\in \R: x\geq 0\}$. For $q\in \R\backslash\{0\}$ and $x(0)=0$,
		this equation has the zero solution, but this solution is non-unique, e.g., $x(t)=(2|q|t/3)^{3/2}$ is a solution as well.
	Since $|q||x|^{1/3}\geq 0$, then all solutions initiated at $0$ remain in $\Omega$. 
		
		The robust positive invariance of the $\dn$-homogeneous cone $\Omega$ for the perturbed  homogeneous system \eqref{eq:p_hom_system} can also be analyzed considering  the vector field $f$ and the tangent cone on the intersection of the unit sphere with the boundary of $\Omega$.
	
	\begin{corollary}\label{cor:p_hom_inv}
		Under Assumptions \ref{as:1},  \ref{as:2} and \ref{as:3}, the $\dn$-homogeneous cone $\Omega$ is robustly positively invariant for the forward complete system \eqref{eq:p_hom_system} if and only if   
		\begin{equation}\label{eq:p_hom_inv}
			f(x,q)\in \mathcal{C}_{\Omega}(x), \quad   \forall x\in \partial \Omega\cap S,   \quad \forall q\!\in\! \R^k.
		\end{equation}
	\end{corollary}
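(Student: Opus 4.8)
The plan is to mimic the proof of Corollary~\ref{cor:inv_cone}: use the $\hat\dn$-homogeneity of $\hat f$ to reduce the tangency test \eqref{eq:p_hom_inv} from the whole boundary to its intersection with the unit sphere $S$, then invoke the general robust-invariance criterion \eqref{eq:p_inv} away from the origin while treating the origin separately through Assumption~\ref{as:3}. Necessity is immediate and needs no homogeneity: for $x\in\partial\Omega\cap S$ (hence $x\neq\zero$) and an arbitrary value $q\in\R^k$, feeding the constant input $q(\cdot)\equiv q$ with $x(t_0)=x$ keeps the trajectory in $\Omega$, so, since $f$ is continuous and $x(t_0+h)=x+h f(x,q)+o(h)$, the difference quotient $\inf_{y\in\Omega}\|x+h f(x,q)-y\|/h$ tends to $0$ as $h\to0^+$, which is exactly $f(x,q)\in\mathcal{C}_{\Omega}(x)$. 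As $x$ and $q$ were arbitrary this gives \eqref{eq:p_hom_inv}.

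For sufficiency I would first read off from \eqref{eq:homogeneous_operator_Rn}, applied to the field $\hat f$, the joint scaling identity $f(\dn(s)x,\tilde\dn(s)q)=e^{\mu s}\dn(s)f(x,q)$ for all $s\in\R$, $x\in\R^n$, $q\in\R^k$, obtained by comparing the first $n$ components. Then, given any $x\in\partial\Omega\backslash\{\zero\}$ and any $q\in\R^k$, I would set $s=\ln\|x\|_{\dn}$, with $\|\cdot\|_{\dn}$ the canonical homogeneous norm of Lemma~\ref{lem:hom_norm} (available by Assumption~\ref{as:2}), so that $\bar x:=\dn(-s)x$ satisfies $\|\bar x\|=1$ and, because $\dn(-s)$ is a homeomorphism with $\dn(-s)\Omega=\Omega$, lies in $\partial\Omega\cap S$; put $\bar q:=\tilde\dn(-s)q\in\R^k$. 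Applying the hypothesis \eqref{eq:p_hom_inv} at the point $\bar x$ with value $\bar q$ gives $f(\bar x,\bar q)\in\mathcal{C}_{\Omega}(\bar x)$, and combining the scaling identity with Lemma~\ref{lem:hom_tang_cone} and the positive-cone property of $\mathcal{C}_{\Omega}(x)$ yields $f(x,q)=e^{\mu s}\dn(s)f(\bar x,\bar q)\in e^{\mu s}\dn(s)\mathcal{C}_{\Omega}(\bar x)=e^{\mu s}\mathcal{C}_{\Omega}(x)=\mathcal{C}_{\Omega}(x)$. Thus the tangency inclusion propagates to every nonzero boundary point and every input value.

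The remaining and, I expect, hardest step is to upgrade this pointwise tangency into genuine invariance despite the irregularity of $f$ at the origin. On $\partial\Omega\backslash\{\zero\}$ forward uniqueness holds by Assumption~\ref{as:3}, so the criterion \eqref{eq:p_inv} applies locally and forbids any trajectory from crossing the boundary at a nonzero point, for any admissible $q\in L^{\infty}$. The origin is the genuine obstacle: for $\mu<0$ the field $f(\cdot,q)$ is generally non-Lipschitz at $\zero$, so the global form of \eqref{eq:p_inv} is unavailable there. This is precisely where Assumption~\ref{as:3} enters—a solution sitting at $\zero$ may leave it only by entering $\Omega$. I would then close the argument by a last-exit-time contradiction: were some solution starting in $\Omega$ to leave it, the last instant $t^*$ it spends on $\partial\Omega$ before the excursion would have $x(t^*)\neq\zero$, contradicting the tangency inclusion via \eqref{eq:p_inv}, or $x(t^*)=\zero$, contradicting Assumption~\ref{as:3}. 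Forward completeness of \eqref{eq:p_hom_system} then secures $x(t)\in\Omega$ for all $t\geq t_0$, which is the asserted robust positive invariance.
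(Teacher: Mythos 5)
Your proof is correct and takes essentially the same route as the paper's: you use the $\hat\dn$-homogeneity of $f$ (Assumption \ref{as:2}) together with Lemma \ref{lem:hom_tang_cone} and the positive-cone property of $\mathcal{C}_{\Omega}(x)$ to propagate the tangency condition \eqref{eq:p_hom_inv} from $\partial\Omega\cap S$ to all of $\partial\Omega\backslash\{\zero\}$, and then invoke the robust-invariance criterion \eqref{eq:p_inv} together with Assumption \ref{as:3} at the origin, which is exactly the paper's argument. The only difference is one of detail, in your favor: you spell out the necessity direction and the last-exit-time treatment of the origin, both of which the paper compresses into a single concluding phrase.
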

 
	\begin{proof} 
		Using $\dn$-homogeneity of $\mathcal{C}_{\Omega}$ (see Lemma \ref{lem:hom_tang_cone}) and $f$ (see Assumption \ref{as:2}), from \eqref{eq:p_hom_inv}  we derive $f(x,q)\in C_{\Omega}(x), \forall x\in \partial \Omega\backslash\{\zero\},  \forall q\in  \R^k$. 
		Taking into account Assumption \ref{as:3} we conclude that  the set $\Omega$ is positively invariant.
	\end{proof}

	Below we study the ISS  on $\Omega$  under  Assumption \ref{as:3}, while the ISSf/ISSfS analysis requires the more conservative assumption.
	
		\begin{assumption}\label{as:4}			
			For any $q\in L^{\infty}(\R,\R^k)$ and any $x_0\in \R^n\backslash \interior \Omega$, a solution of the system \eqref{eq:p_hom_system} is forward unique (at least, locally in time).
		\end{assumption}

	\subsection{ISS Analysis on Homogeneous Cones}

	The Zubov-Rosier theorem  is the main tool for ISS analysis of homogeneous  systems on the Euclidean space $\R^n$ \cite{Hong2001:Aut}, \cite{Andrieu_etal2008:SIAM_JCO}, \cite{Bernuau_etal2013:SCL}. 
	The ISS on $\Omega$ is characterized by the following theorem being a generalization of the result 
	\cite{Ryan1995:SCL} to $\dn$-homogeneous systems on cones.

	\begin{theorem}\label{thm:Ryan_Hong_on_Omega}
		Let $f(\cdot,\zero)\in C^1(\R^n\backslash\{\zero\})$ and Assumptions  \ref{as:1}, \ref{as:1bis}, \ref{as:2}, \ref{as:3} be fulfilled. The system \eqref{eq:p_hom_system} is ISS on $\Omega$
		if   the unperturbed ($q\!=\!\zero$) system  is uniformly asymptotically stable on $\Omega$ and the condition \eqref{eq:p_hom_inv} is fulfiled.
		Moreover, if the system \eqref{eq:p_hom_system} is asymptotically stable on $\R^n$ then Assumption \ref{as:1bis}  can be omitted.
	\end{theorem}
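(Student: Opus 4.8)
The plan is to establish the two inequalities in \eqref{eq:ISS_Omega} separately, reducing the right-hand bound to a homogeneous ISS-Lyapunov estimate. The left inequality $0\le\phi_i(x(t))$ is exactly robust positive invariance of $\Omega$, delivered by Corollary \ref{cor:p_hom_inv}: Assumptions \ref{as:1}, \ref{as:2}, \ref{as:3} together with the boundary condition \eqref{eq:p_hom_inv} guarantee that every solution started in $\Omega$ stays in $\Omega$ for all admissible $q\in L^\infty(\R,\R^k)$. For the right inequality, since the unperturbed system is uniformly asymptotically stable on $\Omega$ and $f(\cdot,\zero)\in C^1(\R^n\backslash\{\zero\})$, Theorem \ref{thm:Zubov_Rosier_on_Omega} furnishes a $\dn$-homogeneous function $V$ of degree $1$ with $V\in C(\Omega)\cap C^1(\Omega\backslash\{\zero\})$, $k_1\|x\|_{\dn}\le V(x)\le k_2\|x\|_{\dn}$, and $\tfrac{\partial V}{\partial x}f(x,\zero)\le -W(x)$ for a $\dn$-homogeneous $W$ of degree $\mu+1$ that is positive on $\Omega\backslash\{\zero\}$. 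I set $w_0=\min_{z\in S\cap\Omega}W(z)>0$.

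The key step is a scaling reduction of the perturbed derivative $\dot V(x)=\tfrac{\partial V}{\partial x}f(x,q)$ to the compact set $S\cap\Omega$. Writing $r=\|x\|_{\dn}$ and $z=\dn(-\ln r)x\in S\cap\Omega$, and using the degree-$1$ Euler identity for $V$ together with the $\hat\dn$-homogeneity of $\hat f$ (Theorem \ref{thm:Euler} and Assumption \ref{as:2}), I would verify $\dot V(x)=r^{1+\mu}\,a\!\left(z,\tilde\dn(-\ln r)q\right)$, where $a(z,p):=\tfrac{\partial V(z)}{\partial z}f(z,p)$ is continuous on $(S\cap\Omega)\times\R^k$ and satisfies $a(z,\zero)\le -W(z)\le -w_0$. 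A compactness argument then yields a uniform $\delta>0$ with $a(z,p)\le -\tfrac{w_0}{2}$ for all $z\in S\cap\Omega$ and all $|p|\le\delta$: the open set $\{a>-w_0/2\}$ is disjoint from the compact slice $(S\cap\Omega)\times\{\zero\}$, hence avoids a uniform tube around it.

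It remains to convert this into an ISS-Lyapunov inequality. Because $\tilde\dn$ is monotone, $\|\tilde\dn(-\ln r)q\|_{\tilde\dn}=\|q\|_{\tilde\dn}/r$, and the norm equivalence of Lemma \ref{lem:hom_norm} (applied to $\tilde\dn$ on $\R^k$) gives $|\tilde\dn(-\ln r)q|\le\sigma_2(\|q\|_{\tilde\dn}/r)$. Thus $r\ge\chi(\|q\|)$ for a suitable $\chi\in\mathcal K$ forces $|\tilde\dn(-\ln r)q|\le\delta$ and therefore $\dot V(x)\le-\tfrac{w_0}{2}\|x\|_{\dn}^{1+\mu}\le-cV^{1+\mu}(x)$ with $c=\tfrac{w_0}{2}k_2^{-(1+\mu)}$; since $\mu>-1$, the map $V\mapsto cV^{1+\mu}$ is positive definite. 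This is exactly the standard ISS-Lyapunov condition $V\ge k_2\chi(\|q\|)\Rightarrow\dot V\le-cV^{1+\mu}$, whose comparison argument (the same estimate precluding finite-time escape and giving forward completeness) yields $\|x(t)\|_{\dn}\le\beta_V(\|x_0\|_{\dn},t-t_0)+\gamma_V(\|q\|_{L^\infty})$ for some $\beta_V\in\mathcal{KL}$, $\gamma_V\in\mathcal K$. Finally, as in the sufficiency part of Theorem \ref{thm:Zubov_Rosier_on_Omega}, $\dn$-homogeneity of $\phi_i$ of degree $\nu_i>0$ gives $0\le\phi_i(x(t))\le c_i\|x(t)\|_{\dn}^{\nu_i}$ with $c_i=\max_{\|z\|=1}\phi_i(z)$; splitting $(\beta_V+\gamma_V)^{\nu_i}$ and using the equivalence of $\|\cdot\|_{\dn}$ and $|\cdot|$ produces the required $\beta_i\in\mathcal{KL}$, $\gamma_i\in\mathcal K$ in \eqref{eq:ISS_Omega}.

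For the last claim, Assumption \ref{as:1bis} was used only to identify $\Omega_0=\{\zero\}$ and thereby obtain $V$ via Theorem \ref{thm:Zubov_Rosier_on_Omega}. If the system is asymptotically stable on all of $\R^n$, the classical Zubov--Rosier construction \cite{Rosier1992:SCL} supplies a global $\dn$-homogeneous Lyapunov function directly, and the scaling and compactness steps above apply verbatim (the state converges to the origin irrespective of the size of $\Omega_0$), so \ref{as:1bis} may be omitted. The main obstacle I anticipate is precisely the scaling identity together with the uniform continuity estimate: one must track how the input is rescaled by $\tilde\dn(-\ln r)$ and bound $|\tilde\dn(-\ln r)q|$ in terms of $\|q\|$ and $r$, since a crude bound would destroy the class-$\mathcal K$ dependence on $\|q\|$ and hence the ISS gain.
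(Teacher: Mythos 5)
Your proposal is correct and follows essentially the same route as the paper's proof: robust positive invariance of $\Omega$ via Corollary \ref{cor:p_hom_inv}, the degree-$1$ homogeneous Lyapunov function from Theorem \ref{thm:Zubov_Rosier_on_Omega}, a homogeneity-based rescaling of $\dot V$ to the unit sphere, and a compactness/uniform-continuity argument yielding the ISS-Lyapunov implication $\|q/\delta\|_{\tilde\dn}\leq \|x\|_{\dn}\;\Rightarrow\;\dot V\leq -cV^{1+\mu}$, with the handling of the ``asymptotically stable on $\R^n$'' case identical to the paper's. Your departures are cosmetic — a tube-lemma argument in place of Heine--Cantor, and spelling out the $\mathcal{KL}$ comparison step that the paper delegates to \cite{SontagWang1996:SCL} — and your choice $w_0=\min_{z\in S\cap \Omega}W(z)$ is in fact slightly more careful than the paper's $\inf_{\|z\|_{\R^n}=1}W(z)$, since $W$ is only guaranteed positive on $\Omega\backslash\{\zero\}$.
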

 
	\begin{proof} 
	 By Corollary \ref{cor:p_hom_inv}, the condition \eqref{eq:p_hom_inv} guarantees the robust positive invariance of the set $\Omega$ 
			for the system \eqref{eq:p_hom_system} provided the system is forward complete on $\Omega$. To prove the claim we just need to show that the uniform asymptotic stability of the unperturbed ($q=\zero$) system on $\Omega$  (resp., on $\R^n$) implies ISS on $\Omega$ (even if $\Omega_0$ is unbounded).
		The proof is inspired by \cite{Ryan1995:SCL} and \cite{Hong2001:Aut}.
		
		Since the unperturbed system is assumed to be uniformly asymptotically stable on $\Omega$ then, by Theorem \ref{thm:Zubov_Rosier_on_Omega},  there exists a $\dn$-homogeneous Lyapunov function $V$   on $\Omega$ (resp., on $\R^n$) of degree 1. Using $\dn$-homogeneity of $f$ we derive
		\[
		\begin{array}{l}
			\tfrac{\partial V(x)}{\partial x} f(x,q) \!=\!\tfrac{\partial V(x)}{\partial x} f(x,0) \!+\!  \tfrac{\partial V(x)}{\partial x} \left(f(x,q)\!-\!f(x,0)\right)\leq \\
			-W(x)+\|x\|^{1+\mu}_{\dn}\left.\tfrac{\partial V(z)}{\partial z} \left(f(z, \tilde \dn(-\ln \|x\|_{\dn})q)-f(z,\zero)\right)\right|_{z=\dn(-\ln \|x\|_{\dn})x}\leq \\
			-\|x\|_{\dn}^{1+\mu}\left.\left(c+\tfrac{\partial V(z)}{\partial z} \left(f(z, \tilde \dn(-\ln \|x\|_{\dn})q)-f(z,\zero)\right)\right)\right|_{z=\dn(-\ln \|x\|_{\dn})x}
		\end{array}\vspace{-2mm}
		\]
		where $0<c=\inf_{\|z\|_{\R^n}=1}W(z)$.
		Since $f\in C(\R^{n+k})$ then, by Heine-Cantor theorem, $f$ is uniformly continuous on any compact. 
		Hence, taking into account $\|z\|_{\R^n}=1$  we conclude that $f(z, \tilde q)\to f(z,\zero)$ as $\|\tilde q\|_{\R^k}\to 0$ uniformly on $z$ from the unit sphere, and there exists $\delta>0$ such that 
	$ 
		\|\tilde q\|_{\R^k}\leq \delta \quad  \Rightarrow \quad \tfrac{\partial V(z)}{\partial z} \left(f(z, \tilde q)-f(z,\zero)\right)\leq \tfrac{c}{2}
	$ 
		for all $z\in\{z\in \R^{n}:\|z\|_{\R^n}=1\}$. Therefore, we derive 
		\[
		\|\tilde \dn(-\ln \|x\|_{\dn})q\|_{\R^k}\leq \delta \quad \Rightarrow  \quad \dot V(x)\leq -\tfrac{c\|x\|_{\dn}^{1+\mu}}{2}\leq- \tfrac{c}{2k_2^{1+\mu}}V^{1+\mu}.
		\]
		The inequality $\|\tilde \dn(-\ln \|x\|_{\dn})q\|_{\R^k}\leq \delta$ is equivalent to 
		$\|q/\delta\|_{\tilde \dn}\leq \|x\|_{\dn}$, so $V$ is an ISS Lyapunov  function \cite{SontagWang1996:SCL} for the system \eqref{eq:p_hom_system} on $\Omega$ (resp., on $\R^n$).
	\end{proof}
	
	\textit{For $\mu<0$ the above theorem provides a characterization of the finite-time ISS  \cite{Hong_etal2010:SIAM_JCO}, since the negative homogeneity degree corresponds to the case of finite-time stability of homogeneous  system} (see, Remark \ref{rem:fts}). 
	
	Repeating the proof of Corollary \ref{cor:inv_linear_cone},  Theorem \ref{thm:Ryan_Hong_on_Omega}  can be adapted to the ``linear" $\dn$-homogeneous cone.
	
	\begin{corollary}\label{cor:Ryan_Hong_on_Omega_lin}
	Let the dilation $\dn$ in $\R^n$ be strictly monotone with respect to the norm $\|x\|=\sqrt{x^{\top}Px}$ with $P\succ 0$. 
		For $\Omega=\Omega_{\rm lin}$ given by \eqref{eq:Omega_lin}, Theorem 
		\ref{thm:Ryan_Hong_on_Omega} remains valid if the condition \eqref{eq:p_hom_inv} is replaced with 
		\begin{equation}\label{eq:p_hom_inv_lin}
			h_i^{\top} f(x,q)\geq \tfrac{x^{\top}Pf(x,q)}{x^{\top}PG_{\dn}x} h_i^{\top}(G_{\dn}-I_n)x, \quad \forall x\in \Xi_i, \quad \forall q\in \R^k, \quad \forall i=1,\ldots,p,
		\end{equation}
		where $\Xi_i:=\{z\in \R^n: \|z\|=1,  h_i^{\top}z=0, h^{\top}_jz\geq 0, \forall j\neq i\}$.
	\end{corollary}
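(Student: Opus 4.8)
The plan is to deduce the corollary from Theorem \ref{thm:Ryan_Hong_on_Omega} by proving that, for the ``linear'' cone $\Omega=\Omega_{\rm lin}$, the geometric invariance condition \eqref{eq:p_hom_inv} and the algebraic condition \eqref{eq:p_hom_inv_lin} are \emph{equivalent}. Theorem \ref{thm:Ryan_Hong_on_Omega} uses \eqref{eq:p_hom_inv} only through Corollary \ref{cor:p_hom_inv}, namely to guarantee the robust positive invariance of $\Omega$; the asymptotic-stability hypothesis on the unperturbed system and Assumptions \ref{as:1}, \ref{as:1bis}, \ref{as:2}, \ref{as:3} are independent of the form of $\Omega$. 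Hence, once the two conditions are shown to coincide on $\Omega_{\rm lin}$, the statement of the theorem carries over verbatim with \eqref{eq:p_hom_inv} replaced by \eqref{eq:p_hom_inv_lin}.

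First I would repeat the coordinate change $z=\Psi(x)$ of \eqref{eq:coord_trans_Rn} used in the proof of Corollary \ref{cor:inv_linear_cone}. By Corollary \ref{cor:hom_lin_cone}, the $\dn$-homogeneous cone $\Omega_{\rm lin}$ becomes the standard polyhedral cone $\tilde\Omega_{\rm lin}=\{z\in\R^n:h_i^{\top}z\geq 0,\ i=1,\ldots,p\}$, and the sets $\Xi_i=\{z\in\partial\Omega_{\rm lin}\cap S:h_i^{\top}z=0\}$ are preserved. The key simplification is that $\Psi$ restricts to the identity on the unit sphere, since $\|x\|=1\Rightarrow\|x\|_{\dn}=1\Rightarrow\Psi(x)=x$; consequently the Jacobian of $\Psi$ pushes the frozen-input field $x\mapsto f(x,q)$ (note $\Psi$ acts on the state only, so its Jacobian is independent of $q$) to
\[
\tilde f(z,q)=\tfrac{z^{\top}Pf(z,q)}{z^{\top}PG_{\dn}z}(I_n-G_{\dn})z+f(z,q)
\]
at every $z\in S$, exactly as $g$ is mapped to $\tilde g$ in \eqref{eq:z_system_g}. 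Because tangent cones transform covariantly under this homeomorphism (Lemma \ref{lem:hom_tang_cone} and Assumption \ref{as:2} already reduce the check to the sphere), condition \eqref{eq:p_hom_inv} for $\Omega_{\rm lin}$ becomes $\tilde f(z,q)\in\mathcal{C}_{\tilde\Omega_{\rm lin}}(z)$ for all $z\in\partial\tilde\Omega_{\rm lin}\cap S$ and all $q\in\R^k$.

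Next I would use the elementary description of the Bouligand tangent cone of a polyhedral cone: at a boundary point $z$ it equals $\{w\in\R^n:h_i^{\top}w\geq 0\text{ for every active }i\}$. Since $\partial\tilde\Omega_{\rm lin}\cap S=\Xi_1\cup\cdots\cup\Xi_p$, membership is equivalent to the scalar inequalities $h_i^{\top}\tilde f(z,q)\geq 0$ for all $z\in\Xi_i$, all $i$, and all $q$. Substituting $\tilde f$ and rearranging gives precisely
\[
h_i^{\top}f(z,q)\geq\tfrac{z^{\top}Pf(z,q)}{z^{\top}PG_{\dn}z}\,h_i^{\top}(G_{\dn}-I_n)z,
\]
which is \eqref{eq:p_hom_inv_lin}; here the $-I_n$ term arises naturally and need not be absorbed using $h_i^{\top}z=0$ (doing so would recover the form \eqref{eq:inv_lin_cone}). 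This closes the equivalence and hence the corollary.

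I expect the only delicate point to be the combinatorial bookkeeping at boundary points where several constraints are simultaneously active: at such $z^\ast$ the tangent-cone membership demands $h_i^{\top}\tilde f(z^\ast,q)\geq 0$ for \emph{each} active index $i$. This is handled by the definition of $\Xi_i$ (which includes corner points via $h_j^{\top}z\geq 0$, $j\neq i$) together with $\partial\tilde\Omega_{\rm lin}\cap S=\Xi_1\cup\cdots\cup\Xi_p$, so that collecting the single-index inequalities over all $i$ reproduces the full tangent-cone condition, just as in Corollary \ref{cor:inv_linear_cone}. The remaining work is the same algebra as there, with $g(z)$ replaced by $f(z,q)$ and the inequalities required to hold uniformly in $q\in\R^k$.
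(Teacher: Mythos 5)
Your proof is correct and takes essentially the same route as the paper: the paper likewise changes coordinates via $z=\Psi(x)$, obtains the transformed field $\tilde f$ of \eqref{eq:z_system_f}, maps $\Omega_{\rm lin}$ to the polyhedral cone $\tilde \Omega_{\rm lin}$, and then repeats the proof of Corollary \ref{cor:inv_linear_cone} to reduce the tangent-cone condition to the scalar inequalities $h_i^{\top}\tilde f\geq 0$ on $\Xi_i$, which rearrange to \eqref{eq:p_hom_inv_lin}. Your added observations---that $\Psi$ restricts to the identity on $S$, and that the $(G_{\dn}-I_n)$ form agrees with \eqref{eq:inv_lin_cone} on $\Xi_i$ since $h_i^{\top}x=0$ there---are accurate and consistent with the paper's computation.
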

 
	\begin{proof}
	    	In the new coordinates
	$ 
		z=\Psi(x)
	$ 
			we derive 
		\[
		\dot z= (I_n-G_{\dn})\dn(-\ln \|x\|_{\dn})x\tfrac{\partial \|x\|_{\dn}}{\partial x}\dot x+\|x\|_{\dn}\dn(-\ln \|x\|_{\dn})\dot x=
		\]
		\[
		\|x\|_{\dn}\tfrac{(I_n-G_{\dn})\Psi(x)\Psi^{\top}(x)P\dn(-\ln \|x\|_{\dn})f(x,q)}{\Psi^{\top}(x)PG_{\dn}\Psi(x)}+\|x\|_{\dn}\dn(-\ln \|x\|_{\dn})f(x,q),
		\]
		where the identity \eqref{eq:hom_norm_der} is utilized on the last step.
		Since, by Assumption \ref{as:2}, we have $f(\dn(s)x,\tilde \dn(s)q)=e^{\mu s}\dn(s)f(x,q),\forall s\in \R$ then, taking into account the identity $\|x\|_{\dn}=\|z\|$, 
		the system \eqref{eq:p_hom_system} can be rewritten as follows 
		\begin{equation}\label{eq:z_system_f}
			\dot z=\|z\|^{1+\mu}\tilde f \left(\tfrac{z}{\|z\|},\tilde \dn(-\ln\|z\|)q\right), 
		\end{equation}
		where $
		\tilde f(\tilde x,\tilde g)=\tfrac{\tilde x^{\top}Pf(\tilde x, \tilde q)}{\tilde x^{\top}PG_{\dn}\tilde x}(I_n-G_{\dn})\tilde x+f\left(\tilde x, \tilde q\right),\quad \tilde x\in \R^{n}\backslash\{\zero\},\quad 	\tilde q\in \R^k.$ 	
The $\dn$-homogeneous cone $\Omega_{\rm lin}$ in the $z$-coordinates has the form \eqref{eq:tilde_Omega_lin} then repeating the proof of Corollary \ref{cor:inv_linear_cone}
we conclude \eqref{eq:p_hom_inv_lin} is equivalent to \eqref{eq:p_hom_inv}.
	\end{proof}

		\subsection{ISSf and ISSfS Analysis on Homogeneous Cones}
		 Let us consider the set $\Omega_r\subset \R^n$ given by
		\begin{equation}
			\Omega_r=\{x\in \R^n : \phi_i(x)+r^{\nu_i}_i\geq 0,i=1,2,\ldots,p\}
		\end{equation}
		where $\nu_i>0$ and $r=(r_1,\ldots,r_p)^{\top}\in \R^n_+$ . Obviously, under Assumption \ref{as:1}, it holds $\Omega\subset \Omega_{r}$  and $\partial \Omega_r\in \R^n\backslash \interior \Omega$ for any $r\in \R^p_+$.
		Moreover, $r\leq \tilde r\Rightarrow \Omega_{r}\subset \Omega_{\tilde r}$, where the inequality for the vectors $r,\tilde r\in \R^p_{+}$
		is understood in the component-wise sense.

		\begin{theorem}\label{thm:ISSf}
			Let the system \eqref{eq:p_hom_system} be forward complete for any $q\in L^{\infty}(\R,\R^k)$.
			Under Assumptions \ref{as:1}, \ref{as:2} and \ref{as:4}, the system \eqref{eq:p_hom_system} is ISSf if there exists $r\in \R^{p}_{+}:$ 
			\begin{equation}\label{eq:ISSf_criterion}
					f(x,q) \in C_{\Omega_r}(\xi), \quad \forall x\in  \partial \Omega_r\backslash S, \quad \forall q\in \R^k : \|q\|_{\R^k}\leq 1.
			\end{equation}
		\end{theorem}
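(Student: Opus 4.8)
The plan is to reduce the ISSf estimate for an arbitrary perturbation to a normalized one via the dilation symmetry of solutions (Lemma~\ref{lem:hom_sol_pert}), using as an intermediate step the robust positive invariance of the inflated cone $\Omega_r$ under unit-bounded perturbations, and then to extract the comparison functions $\gamma_i$ from the homogeneous-norm equivalences of Lemma~\ref{lem:hom_norm}. The cornerstone observation is the scaling identity $\dn(s)\Omega_r=\Omega_{e^s r}$, where $e^s r=(e^sr_1,\ldots,e^sr_p)^\top$: indeed, for $y=\dn(s)x$ the $\dn$-homogeneity of $\phi_i$ gives $\phi_i(\dn(-s)y)=e^{-\nu_i s}\phi_i(y)$, so $\phi_i(x)+r_i^{\nu_i}\geq 0$ is equivalent to $\phi_i(y)+(e^sr_i)^{\nu_i}\geq 0$. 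Thus the family $\{\Omega_r\}$ is permuted by the dilation, the scalar $e^s$ multiplying $r$.

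First I would establish the base claim: under \eqref{eq:ISSf_criterion} the set $\Omega_r$ is robustly positively invariant for every $q\in L^\infty(\R,\R^k)$ with $\|q\|_{L^\infty}\leq 1$. Since $\partial\Omega_r\subset\R^n\backslash\interior\Omega$, Assumption~\ref{as:4} supplies forward uniqueness along $\partial\Omega_r$, so together with forward completeness the Nagumo-type criterion \eqref{eq:p_inv} applies and invariance is equivalent to $f(x,q(t))\in\mathcal{C}_{\Omega_r}(x)$ at every boundary point, which is exactly the content of \eqref{eq:ISSf_criterion}. \emph{This is the step I expect to be the main obstacle}: unlike the homogeneous cone of Corollary~\ref{cor:p_hom_inv}, the inflated set $\Omega_r$ is not a dilation cone (indeed $\dn(s)\Omega_r=\Omega_{e^sr}\neq\Omega_r$), so the tangent-cone condition cannot be reduced to the unit sphere by the homogeneity of $f$ and $\mathcal{C}_{\Omega_r}$; the condition has to be controlled on all of $\partial\Omega_r$, and care is needed to reconcile this with the sphere normalization used elsewhere.

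Next, for an arbitrary $q$ with $Q:=\|q\|_{L^\infty}>0$, I would define $s^\ast=s^\ast(q)$ as the unique real with $\|\tilde\dn(-s^\ast)q(\cdot)\|_{L^\infty}=1$; existence and uniqueness follow from the monotonicity of $\tilde\dn$ and the dilation limits, since $s\mapsto\|\tilde\dn(-s)q\|_{L^\infty}$ decreases continuously from $+\infty$ to $0$. Setting $x_0^{\rm b}=\dn(-s^\ast)x_0\in\Omega$ (using the $\dn$-invariance of the cone $\Omega$) and the base perturbation $q^{\rm b}(\tau)=\tilde\dn(-s^\ast)q(t_0+e^{-\mu s^\ast}(\tau-t_0))$, which satisfies $\|q^{\rm b}\|_{L^\infty}=1$, Lemma~\ref{lem:hom_sol_pert} gives $x_q(t,x_0)=\dn(s^\ast)x_{q^{\rm b}}(t_0+e^{\mu s^\ast}(t-t_0),x_0^{\rm b})$. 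By the base claim the rescaled trajectory remains in $\Omega_r$, whence $x_q(t,x_0)\in\dn(s^\ast)\Omega_r=\Omega_{e^{s^\ast}r}$, i.e. $\phi_i(x_q(t,x_0))\geq-e^{\nu_i s^\ast}r_i^{\nu_i}$ for all $t\geq t_0$ and all $i$.

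Finally I would bound $e^{s^\ast}$ by a class-$\mathcal{K}$ function of $Q$. At a time $t^\ast$ where $\|\tilde\dn(-s^\ast)q(t^\ast)\|_{\R^k}\geq\tfrac12$, the homogeneous-norm estimates of Lemma~\ref{lem:hom_norm} for $\tilde\dn$ and the degree-one homogeneity $\|\tilde\dn(\alpha)v\|_{\tilde\dn}=e^\alpha\|v\|_{\tilde\dn}$ yield
\[
\tfrac12\leq\|\tilde\dn(-s^\ast)q(t^\ast)\|_{\R^k}\leq\sigma_2\!\left(e^{-s^\ast}\|q(t^\ast)\|_{\tilde\dn}\right)\leq\sigma_2\!\left(e^{-s^\ast}\sigma_1^{-1}(Q)\right),
\]
so $e^{s^\ast}\leq\sigma_1^{-1}(Q)/\sigma_2^{-1}(\tfrac12)$. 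Defining $\gamma_i(Q)=r_i^{\nu_i}\big(\sigma_1^{-1}(Q)/\sigma_2^{-1}(\tfrac12)\big)^{\nu_i}\in\mathcal{K}$ then gives $-\gamma_i(\|q\|_{L^\infty})\leq\phi_i(x(t))$, which is precisely \eqref{eq:ISSF}. The degenerate case $q\equiv 0$ is recovered by letting $s^\ast\to-\infty$: the identity above holds for every $s\in\R$, so $x_0(t,x_0)\in\bigcap_{s\in\R}\Omega_{e^sr}=\Omega$, consistent with $\gamma_i(0)=0$. This completes the argument; the homogeneous scaling and the $\mathcal{K}$-function extraction are routine once the base invariance is secured.
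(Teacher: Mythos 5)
Your proof is correct and takes essentially the same route as the paper's: robust positive invariance of $\Omega_r$ under unit-bounded perturbations, rescaling of trajectory and perturbation via Lemma \ref{lem:hom_sol_pert} with $e^{s^\ast}=\|q\|_{\tilde\dn,L^{\infty}}$ (your Euclidean normalization is equivalent, since by monotonicity the unit balls of $\|\cdot\|_{\R^k}$ and $\|\cdot\|_{\tilde\dn}$ coincide), and the $\dn$-homogeneity of $\phi_i$ to get $\phi_i(x(t))\geq -(e^{s^\ast}r_i)^{\nu_i}$, which is exactly the paper's inequality \eqref{eq:hom_ISSf}. Your explicit extraction of $\gamma_i\in\mathcal{K}$ via Lemma \ref{lem:hom_norm} only spells out a step the paper leaves implicit, and the ``main obstacle'' you flag is a non-issue because hypothesis \eqref{eq:ISSf_criterion} is deliberately imposed on (essentially) all of $\partial\Omega_r$ rather than reduced to the unit sphere, precisely as you use it.
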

  
		\begin{proof}
		The condition \eqref{eq:ISSf_criterion} implies that the set $\Omega_r$ is positively invariant for the system \eqref{eq:p_hom_system} if $\|q\|_{L^{\infty}}\leq 1$.  If $0<\|q\|_{\dn,L^{\infty}_{(t_0,t_1)}}<+\infty$ then
	$\|q_s\|_{\dn,L^{\infty}_{(t_0,t'_1)}}\leq 1$ (or, equivalently, $\|q_s\|_{L^{\infty}_{(t_0,t'_1)}}\leq 1$), where $q_s$ is given by the formula \eqref{eq:q_s} with $s=-\ln \|q\|_{\dn,L^{\infty}_{(t_0,t_1)}}$ and $t'=t_0+e^{-\mu s}(t_1-t_0)$. Since the $\Omega_r$ positively invariant for the system  \eqref{eq:p_hom_system} with any perturbation $\|q\|_{L^{\infty}}\leq 1$ (in particular with $q_s$) then
	$
	\phi_i(x_{q_s}(t))+r_i^{\nu_i}\geq 0
	$ for $i=1,\ldots,p$ and $t\in (t_0,t_1')$. Using the formula \eqref{eq:hom_sol_pert}
	we derive
	\[
	\phi(\dn(s) x_q(t_0+e^{\mu s}(t-t_0)))+r_i^{\nu_i}\geq0
	\]
	for all $t\in (t_0,t'_1)$, where $x_q$ is a solutions of the system with the perturbation $q$.
	Hence, taking into account $s=-\ln \|q\|_{\dn,L^{\infty}_{(t_0,t_1)}}$ and $t'_1=t_0+e^{-\mu s}(t_1-t_0)$ we derive the inequality \eqref{eq:hom_ISSf}, which guarantees ISSf. The proof is complete.
		\end{proof}
		
		The following lemma presents a necessary condition of ISSf for homogeneous systems.
		\begin{lemma}
		If, under Assumptions \ref{as:1}, \ref{as:2} and \ref{as:4}, the system \eqref{eq:p_hom_system} is ISSf then 
		there exists $r\in \R^p_+$ such that 
			\begin{equation}\label{eq:hom_ISSf}
				\phi_i(x(t))+(r_i\|q\|_{\tilde \dn, L^{\infty}_{(t_0,t)}})^{\nu_i}\geq 0
			\end{equation}
			for all $t\!\geq\! t_0$, $x_0\!\in\! \Omega$, $q\!\in\! L^{\infty}(\R,\R^k)$, $i\!=\!1,\ldots,p$, where 
		$ 
				\|q\|_{\tilde \dn, L^{\infty}_{(t_0,t)}}\!=\!\mathrm{ess}\sup\limits_{\tau\in (t_0,t)}\|q(\tau)\|_{\tilde \dn}.
		$ 
		\end{lemma}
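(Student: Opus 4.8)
The plan is to promote the abstract class-$\mathcal{K}$ gain guaranteed by ISSf into the homogeneous power-type gain of \eqref{eq:hom_ISSf}, by transporting a single unit-level overshoot estimate along the dilation symmetry of the trajectories established in Lemma~\ref{lem:hom_sol_pert}.

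First I would record a uniform bound for normalized perturbations. By Definition~\ref{def:ISSf} there are $\gamma_i\in\mathcal{K}$ with $\phi_i(x(t))\geq -\gamma_i(\|q\|_{L^{\infty}_{(t_0,t)}})$ for all admissible $t$, all $x_0\in\Omega$, and all $q$. Applying the norm equivalence \eqref{eq:rel_norm_and_hom_norm_Rn} for $\tilde\dn$ in $\R^k$ (Lemma~\ref{lem:hom_norm}), the condition $\|q\|_{\tilde\dn,L^{\infty}_{(t_0,t)}}\leq 1$ forces $|q(\tau)|\leq\sigma_2(1)$ for a.e. $\tau$, hence $\|q\|_{L^{\infty}_{(t_0,t)}}\leq\sigma_2(1)$. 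Since the ISSf gain does not depend on the initial state, this yields, uniformly in $x_0\in\Omega$ and $t\geq t_0$, the constant bound $\phi_i(x(t))\geq -m_i$ with $m_i:=\gamma_i(\sigma_2(1))\geq 0$. I then set $r_i:=m_i^{1/\nu_i}$, so that the unit-level estimate reads $\phi_i(x(t))\geq -r_i^{\nu_i}$ whenever $\|q\|_{\tilde\dn,L^{\infty}}\leq 1$.

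Next I would rescale an arbitrary perturbation down to unit level. Fix $t>t_0$, $x_0\in\Omega$, and $q$ with $0<\rho:=\|q\|_{\tilde\dn,L^{\infty}_{(t_0,t)}}<\infty$; the case $\rho=0$ gives $q=\zero$ a.e., whence $\phi_i\geq-\gamma_i(0)=0$ directly, while $\rho<\infty$ always holds for $q\in L^{\infty}$ by norm equivalence. Put $s:=-\ln\rho$ and let $q_s$ be the scaled input \eqref{eq:q_s}. Since $\|\tilde\dn(s)y\|_{\tilde\dn}=e^{s}\|y\|_{\tilde\dn}$ and the time change $\tau\mapsto t_0+e^{\mu s}(\tau-t_0)$ carries $(t_0,\tilde t)$ onto $(t_0,t)$ with $\tilde t:=t_0+e^{-\mu s}(t-t_0)$, a direct computation gives $\|q_s\|_{\tilde\dn,L^{\infty}_{(t_0,\tilde t)}}=e^{s}\rho=1$. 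As $\Omega$ is a $\dn$-homogeneous cone (Assumption~\ref{as:1}), one has $\dn(s)x_0\in\Omega$, so the unit-level estimate applies to $x_{q_s}(\cdot,\dn(s)x_0)$ at time $\tilde t$, i.e. $\phi_i(x_{q_s}(\tilde t,\dn(s)x_0))\geq -r_i^{\nu_i}$. Invoking the solution symmetry \eqref{eq:hom_sol_pert} (valid under Assumptions~\ref{as:2} and \ref{as:4}, which secure forward uniqueness off the interior of $\Omega$) gives $x_{q_s}(\tilde t,\dn(s)x_0)=\dn(s)x_q(t,x_0)$, and the $\dn$-homogeneity of $\phi_i$ of degree $\nu_i$ gives $\phi_i(x_{q_s}(\tilde t,\dn(s)x_0))=e^{\nu_i s}\phi_i(x_q(t,x_0))$. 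Combining, $e^{\nu_i s}\phi_i(x_q(t,x_0))\geq -r_i^{\nu_i}$, that is $\phi_i(x_q(t,x_0))\geq -r_i^{\nu_i}e^{-\nu_i s}=-(r_i\rho)^{\nu_i}$, which is exactly \eqref{eq:hom_ISSf}.

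I expect the only delicate point to be the simultaneous rescaling of amplitude and time: one must verify that $q_s$ attains $\tilde\dn$-norm exactly one over the reparametrized window $(t_0,\tilde t)$ and that $\tilde t$ is chosen so that \eqref{eq:hom_sol_pert} returns the original value $x_q(t,x_0)$ (forward existence of $x_{q_s}(\cdot,\dn(s)x_0)$ up to $\tilde t$ being inherited from that of $x_q(\cdot,x_0)$ up to $t$ through the same symmetry). The remaining ingredients, namely the uniformity of the ISSf bound in $x_0$, the invariance of $\Omega$ under $\dn(s)$, and the degree-$\nu_i$ scaling of $\phi_i$, are immediate from the hypotheses and the results already established.
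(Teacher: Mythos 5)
Your proof is correct and follows essentially the same route as the paper: both transport the ISSf bound along the trajectory symmetry of Lemma~\ref{lem:hom_sol_pert}, using the $\dn$-homogeneity of $\phi_i$ and the scaling $\|q_s\|_{\tilde\dn,L^{\infty}}=e^{s}\|q\|_{\tilde\dn,L^{\infty}}$ with the reparametrized time window. The only cosmetic difference is that you normalize each input to unit homogeneous norm by the specific choice $s=-\ln\rho$, whereas the paper keeps the bound for all $s$ and homogenizes the gain via $\gamma_i(\sigma)=\inf_{s\in\R}e^{-\nu_i s}\tilde\gamma_i(e^{s}\sigma)$, then shows $\gamma_i(\sigma)=\sigma^{\nu_i}\gamma_i(1)$ --- your argument amounts to evaluating that infimum at one admissible $s$, which suffices for the existence claim.
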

  
		\begin{proof}
	Indeed,	on the one hand, if the system \eqref{eq:p_hom_system} is ISSf on $\Omega$ then, in  view of Lemma \ref{lem:hom_norm},
			there exists $\tilde \gamma_i\in \mathcal{K}$ such that 
		$ 
				-\tilde \gamma_i(\|q\|_{\tilde \dn, L^{\infty}_{(t_0,t)}})\leq \phi_i(x_{q}(t,x_0))
		$ 
			for all $t\!\geq\! t_0$, $x_0\!\in\! \Omega$, $q\!\in\! L^{\infty}(\R,\R^k)$ and $i\!=\!1,\ldots,p$, where $x_q(\cdot,x_0)$ denotes a solution of the system with  $x(t_0)=x_0$ and the perturbation $q$.
			On the other hand, since  
			$
			-\tilde \gamma_i(\|q_s\|_{\tilde \dn, L^{\infty}_{(t_0,t)}})\leq \phi_i(x_{q_s}(t,\dn(s)x_0))
			$
			for all $t\geq t_0$ then using \eqref{eq:hom_sol_pert}, $\dn$-homogeneity of $\phi_i$ and $\tilde \dn$-homogeneity of $\|\cdot\|_{\tilde \dn}$ we derive
		$ 
				-\tilde \gamma_i(e^s\|q\|_{\tilde \dn, L^{\infty}_{(t_0,t)}})\leq e^{\nu_i s}\phi_i(x_{q}(t,x_0)), \forall s\in \R,  \forall t\geq t_0,
		$ 
			where $q_s$ and $x_{q_s}$ are defined in Lemma \ref{lem:hom_sol_pert}. Hence,
			taking 
		$ 
			\gamma_i(\sigma)=\inf_{s\in \R} e^{-\nu_i s}\tilde \gamma_i(e^s\sigma), \sigma\geq 0
		$ 
			we derive 
		$ 
				- \gamma_i(\|q\|_{\tilde \dn, L^{\infty}_{(t_0,t)}})\leq \phi_i(x_{q}(t,x_0)),  \forall t\geq t_0.
		$ 
			By construction, $\gamma_i$ is a standard homogeneous function of the positive degree $\nu_i$ such that $0\leq \gamma_i(\sigma)\leq \tilde \gamma_i(\sigma), \forall \sigma\geq 0$. Hence, $\gamma_i$ is continuous at zero. By homogeneity, $\gamma_i(\sigma)=e^{-\nu_i s}\gamma(e^s \sigma),\forall s\in \R, \forall \sigma\geq 0$, so taking $s=-\ln \sigma$ we derive $\gamma(\sigma)=\sigma^{\nu_i}\gamma_i(1), \forall \sigma\neq 0$, i.e., 
		  the inequality 
			\eqref{eq:hom_ISSf} is fulfilled for $r_i^{\nu_i}=\gamma_i(1)$. 
		\end{proof}
		
		The sufficient ISSf condition \eqref{eq:ISSf_criterion} is rather close to the necessary condition \eqref{eq:hom_ISSf}.
		Indeed, if \eqref{eq:hom_ISSf} holds  $\forall x_0\in \Omega_r$ (but not only for $x_0\in \Omega$) then the set $\Omega_r$ is positively invariant for the system \eqref{eq:p_hom_system} with $\|q\|_{L^{\infty}}\leq 1$. This  is equivalent to \eqref{eq:ISSf_criterion}.

For the ``linear'' $\dn$-homogeneous cone, the ISSf can be established by checking a more simple  algebraic condition for the unperturbed system. 
\begin{corollary}\label{cor:ISSf_on_Omega_lin}Let  $\dn$ be strictly monotone with respect to the norm $\|x\|=\sqrt{x^{\top}Px}$ with $P\succ 0$. 
		For $\Omega=\Omega_{\rm lin}$ given by \eqref{eq:Omega_lin} with $h_i\neq \zero,i=1,\ldots,p$, Theorem 
		\ref{thm:ISSf} remains valid if Assumption \ref{as:4} is omitted but the condition \eqref{eq:ISSf_criterion} is replaced with 
		\begin{equation}\label{eq:ISSf_Omega_lin_0}
		\exists	r\in \R^p_+ \;:\;  h_i^{\top} \!f\left(\tfrac{z}{\|z\|},\zero\right)\!>\! \tfrac{h_i^{\top}(G_{\dn}-I_n)zz^{\top}Pf\left(\frac{z}{\|z\|},\zero\right)}{z^{\top}PG_{\dn}z}, \; \forall z\!\in\! \Xi_i(r), \; i=1,\ldots,p, 
		\end{equation}
		where $\Xi_i(r):=\{z\in \R^n:   h_i^{\top}z+r_i=0, h^{\top}_jz+r_j\geq 0, \forall j\neq i\}$ and $r=(r_1,\ldots,r_p)^{\top}\in \R^p_+$.
		Moreover, if $p=n$, the matrices $H$ and $\mu I_n +G_{\dn}$ are invertible and $f(\cdot,\zero)\in C^1(\R^n\backslash\{\zero\})$ then the inequality \eqref{eq:ISSf_Omega_lin_0} holds provided that the matrix 
		\[
 M(z)=\left.H\!\left(\tfrac{x^{\top}Pf(x,\zero)}{x^{\top}PG_{\dn}x}(I_n-G_{\dn}) +(\mu I_n+G_{\dn})^{-1}\tfrac{\partial f(x,\zero)}{\partial x}G_{\dn}\right)H^{-1}\right|_{x=\frac{z}{\|z\|}}
		\]
		is Metzler and
		\begin{equation}\label{eq:ISSf_Omega_lin_M(x)}
		\exists	r\in \R^n_+\; : \; -e_i^{\top}M(z)r>0, \quad \forall z\in \Xi_i(r), \quad i=1,\ldots,n.  
		\end{equation}
	\end{corollary}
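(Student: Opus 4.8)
The plan is to follow the coordinate-change strategy of Corollaries~\ref{cor:inv_linear_cone} and \ref{cor:Ryan_Hong_on_Omega_lin}. First I would pass to the coordinates $z=\Psi(x)$, in which the canonical homogeneous norm becomes the ordinary norm $\|z\|$, the system becomes \eqref{eq:z_system_f}, and the constraint functions $\phi_i(x)=h_i^{\top}\Psi(x)$ are $\dn$-homogeneous of degree $\nu_i=1$ (since $\Psi(\dn(s)x)=e^s\Psi(x)$), so that $r_i^{\nu_i}=r_i$ and $\Omega_r$ becomes the shifted linear cone $\{z:h_i^{\top}z+r_i\ge 0,\,i=1,\dots,p\}$ whose $i$th facet is $\Xi_i(r)$. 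At a point of $\Xi_i(r)$ the only active constraint is the $i$th, so the tangent-cone membership in \eqref{eq:ISSf_criterion} reduces to the Nagumo inequality $h_i^{\top}\dot z\ge 0$. Evaluating this for the unperturbed field via \eqref{eq:z_system_f} with $q=\zero$, writing $w=z/\|z\|$ and clearing the positive factors $\|z\|^{1+\mu}$ and $\|z\|$, reproduces exactly \eqref{eq:ISSf_Omega_lin_0}. Thus \eqref{eq:ISSf_Omega_lin_0} is precisely the statement that the unperturbed field points strictly inward on the facets of the transformed cone.

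Next I would upgrade this strict unperturbed condition to the robust condition \eqref{eq:ISSf_criterion} demanded by Theorem~\ref{thm:ISSf}. The key observation is that \eqref{eq:ISSf_Omega_lin_0} is invariant under the joint scaling $(r,z)\mapsto(\lambda r,\lambda z)$: both sides are ratios of terms of equal degree in $z$ (hence homogeneous of degree zero), while $w=z/\|z\|$ is unchanged; consequently, if it holds on $\Xi_i(r)$ it holds with the same directional margin on $\Xi_i(\lambda r)=\lambda\,\Xi_i(r)$ for every $\lambda>0$. Taking $\lambda$ large pushes the facets away from the origin so that $\inf_{z\in\Xi_i(\lambda r)}\|z\|\to\infty$; since $f\in C(\R^{n+k})$ is uniformly continuous on compacta (Heine--Cantor, as in the proof of Theorem~\ref{thm:Ryan_Hong_on_Omega}) and $\|\tilde\dn(-\ln\|z\|)q\|_{\R^k}\to 0$ uniformly for $\|q\|_{\R^k}\le 1$ as $\|z\|\to\infty$ by strict monotonicity of $\tilde\dn$, the perturbed field $\tilde f(z/\|z\|,\tilde\dn(-\ln\|z\|)q)$ differs from $\tilde f(z/\|z\|,\zero)$ by less than the positive margin. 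Hence for $\lambda$ large the strict inequality $h_i^{\top}\dot z>0$ persists for all $\|q\|_{\R^k}\le1$ on every facet of $\Omega_{\lambda r}$. Because this inequality is strict, $\Omega_{\lambda r}$ stays positively invariant under every admissible $q$ without invoking forward uniqueness, which is exactly why Assumption~\ref{as:4} may be dropped; Theorem~\ref{thm:ISSf} then yields ISSf.

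For the second assertion I would reuse the computation of Corollary~\ref{cor:inv_sufficient}. Under invertibility of $\mu I_n+G_{\dn}$ and $f(\cdot,\zero)\in C^1(\R^n\backslash\{\zero\})$, Euler's identity (Theorem~\ref{thm:Euler}) rewrites the unperturbed $z$-dynamics as $\dot y=\|z\|^{\mu}M(z)\,y$ with $y=Hz$ and $M$ as displayed, so that $h_i^{\top}\dot z=\|z\|^{\mu}e_i^{\top}M(z)y$ and \eqref{eq:ISSf_Omega_lin_0} is equivalent to $e_i^{\top}M(z)y>0$ on $\Xi_i(r)$. On $\Xi_i(r)$ the image $y=Hz$ satisfies $y_i=-r_i$ and $y_j\ge -r_j$; setting $\bar y=y+r\ge\zero$ with $\bar y_i=0$ gives $e_i^{\top}M(z)y=e_i^{\top}M(z)\bar y-e_i^{\top}M(z)r$. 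If $M(z)$ is Metzler then $M_{ij}(z)\ge0$ for $j\ne i$, so $e_i^{\top}M(z)\bar y=\sum_{j\ne i}M_{ij}(z)\bar y_j\ge0$; combined with \eqref{eq:ISSf_Omega_lin_M(x)} this yields $e_i^{\top}M(z)y\ge -e_i^{\top}M(z)r>0$, which establishes \eqref{eq:ISSf_Omega_lin_0}.

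I expect the main obstacle to be the second paragraph: making rigorous the passage from the strict unperturbed inequality to robust invariance under all $\|q\|_{\R^k}\le1$ on the \emph{noncompact} facets $\Xi_i(\lambda r)$. One must prove the uniform smallness of the perturbation term as the facets recede and reconcile it with the scale-invariance of \eqref{eq:ISSf_Omega_lin_0}; in particular one has to keep every $r_i>0$, since only then is $\|z\|$ bounded below on each facet and driven to infinity by the scaling, so that $\tilde\dn(-\ln\|z\|)q$ can be absorbed into the margin.
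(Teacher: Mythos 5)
Your proposal follows essentially the same route as the paper's proof: pass to the coordinates $z=\Psi(x)$, reduce \eqref{eq:ISSf_criterion} to the facet inequality \eqref{eq:ISSf_Omega_lin_0}, absorb the perturbation via Heine--Cantor uniform continuity on far-away facets (your scaling $(r,z)\mapsto(\lambda r,\lambda z)$ is a cleaner justification of the paper's ``select $r_i$ large enough'' step, valid because both sides of \eqref{eq:ISSf_Omega_lin_0} are of degree zero in $z$), and exploit the strictness of the resulting inequality to get invariance of $\Omega_r$ for possibly non-unique solutions, which is exactly how the paper drops Assumption~\ref{as:4}. For the Metzler part, your decomposition $y=\bar y-r$ with $\bar y_i=0$ on $\Xi_i(r)$ is a slight streamlining of the paper's shift by $\lambda I_n$ to a nonnegative matrix, but the substance---Euler's identity yielding $\dot y=\|z\|^{\mu}M(z)y$ with $y=Hz$ and nonnegativity of the off-diagonal contribution combined with \eqref{eq:ISSf_Omega_lin_M(x)}---is identical.
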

 
	\begin{proof} 
	The repeating the proof of Corollary \ref{cor:Ryan_Hong_on_Omega_lin} we rewrite  \eqref{eq:ISSf_criterion}
	in the form 
	\begin{equation}\label{eq:ISSf_Omega_lin_q}
			h_i^{\top} \!f\left(\!\tfrac{z}{\|z\|},\tilde \dn(-\ln \|z\|)q\!\right)\!>\! \tfrac{h_i^{\top}\!(G_{\dn}-I_n)zz^{\top}\!Pf\left(\frac{z}{\|z\|},\tilde \dn(-\ln \|z\|)q\right)}{z^{\top}PG_{\dn}z} , \;  z\!\in\! \Xi_i(r), \; \|q\|_{\R^k}\!\leq\! 1.
		\end{equation}
	Taking to account the uniform continuity of $f$ on any compact we derive
	\[
	\sup_{\|y\|=1, \|q\|_{\R^k}\leq 1} \left\|f\left(y,\tilde \dn(-\ln \|z\|)q\right)-f\left(y,\zero\right)\right\|\to 0 \text{ as } \|z\|\to +\infty. 
	\]
	so the inequality \eqref{eq:ISSf_Omega_lin_0} implies  the inequality \eqref{eq:ISSf_Omega_lin_q} for all $z:\|z\|\geq r_0$, where $r_0>0$ is a sufficiently large number. Since $h_i$ is assumed to be non-zero then number $r_i$ can be selected large enough to guarantee that $ h_i^{\top}z+r_i=0\Rightarrow \|z\|\geq r_0$. The inequality \eqref{eq:ISSf_Omega_lin_q} guarantees that 
	for any (possibly non-unique) solution $z$ of the equivalent system \eqref{eq:z_system_f} 
	the following implication 
	$$
	h_i^{\top} z(t)\in \Xi_i(r) \Rightarrow h_i^{\top}\dot z(t)>0,
	$$
	holds. The latter means that the set $\tilde \Omega_r=\{z\in \R^n : h_i^{\top}z+r_i\geq 0,i=1,\ldots,p\}$ is \textit{strictly}\footnote{A positively invariant set $\Omega$ is strictly positively invariant if the implication $z(t_0)\in \partial \Omega \Rightarrow z(t)\in \interior\Omega, \forall t>t_0$ holds for any trajectory $z$ of the system.} positively invariant for the equivalent system \eqref{eq:z_system_f}. Hence, the set $\Omega_r$ is strictly positively invariant 
	for the original system. 
	Moreover, if $f(\cdot,\zero)\in C^1(\R^n\backslash\{\zero\})$ then, by Theorem \ref{thm:Euler},
	$f(z/\|z\|,\zero)=\left.(\mu I_n+G_{\dn})^{-1}\tfrac{\partial f(x,\zero)}{\partial x}G_{\dn}\right|_{x=\frac{z}{\|z\|}}$ and the inequality \eqref{eq:ISSf_Omega_lin_0} becomes
	\[
	\left.h_i^{\top}\!\left(\tfrac{x^{\top}Pf(x,\zero)}{x^{\top}PG_{\dn}x}(I_n-G_{\dn}) +(\mu I_n+G_{\dn})^{-1}\tfrac{\partial f(x,\zero)}{\partial x}G_{\dn}\right)\right|_{x=\frac{z}{\|z\|}} z>0, \forall z\in  \Xi_i(r).
	\]
	If $H$ is invertible then the latter inequality can be rewritten as
$ 
	e_i^{\top} M(z)Hz>0, \forall z\in \Xi_i(r).
$ 
	Since $M(z)$ is a Metzler matrix and the function $z\mapsto M(z)$ is continuous and uniformly bounded on $S$ then there exists $\lambda>0$ such that $ M(z)+\lambda I_n$ is a non-negative matrix for all $z\in S$. Since 
	\[
	M(z) Hz=(  M(z)+\lambda I_n) Hz-\lambda Hz=(  M(z)+\lambda I_n) (Hz+r)-
	(  M(z)+\lambda I_n) r-\lambda Hz
	\]
	then taking into account $( M(z)+\lambda I_n) (Hz+r)\in \R^n_+$ for $z\in \Omega_r$  and $e_i^{\top}Hz=-r_i$ for $z\in \Xi_i(r)$ we derive 
$ 
	-e_i^{\top} M(z)r>0, \;\; \forall z\in \Xi_i(r) \quad \Rightarrow \quad 
	e_i^{\top}  M(z)Hz>0, \;\; \forall z\in \Xi_i(r).
$ 
	Therefore, the inclusion \eqref{eq:ISSf_Omega_lin_M(x)} guarantees that the inequality \eqref{eq:ISSf_Omega_lin_0} is fulfilled.
	\end{proof}

The ISSfS 
on $\Omega$ can be characterized similarly to ISS on $\Omega$.
		
		\begin{theorem}\label{thm:ISSfS}
			Under Assumptions \ref{as:1}, \ref{as:2}, \ref{as:4},  the system  \eqref{eq:p_hom_system} is ISSfS on $\Omega$ if 
			the unperturbed ($q=\zero$) system is asymptotically stable on $\R^n$ and the condition \eqref{eq:ISSf_criterion} is fulfilled for some $r\in \R^{n}_+$.
		\end{theorem}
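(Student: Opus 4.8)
The plan is to prove the two one-sided estimates in \eqref{eq:ISSS} by two independent mechanisms: the left (safety) inequality will come directly from the ISSf analysis already developed, and the right (stability) inequality from a purely global homogeneous ISS argument that never invokes the positive invariance of $\Omega$. The two resulting gains are then merged.

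First I would dispatch the lower bound. Assumptions \ref{as:1}, \ref{as:2}, \ref{as:4} together with the hypothesis \eqref{eq:ISSf_criterion} are exactly the premises of Theorem \ref{thm:ISSf}, so the system \eqref{eq:p_hom_system} is ISSf; Definition \ref{def:ISSf} then supplies $\gamma_i^-\in\mathcal{K}$ with $-\gamma_i^-(\|q\|_{L^{\infty}_{(t_0,t)}})\leq\phi_i(x(t))$ for all admissible $t,x_0,q$ and all $i$, which is the left half of \eqref{eq:ISSS}. The forward completeness required as a standing hypothesis of Theorem \ref{thm:ISSf} I would obtain as a by-product of the Lyapunov estimate constructed for the upper bound, so logically the stability part is settled first.

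The upper bound is the substantive part. Since $f(\cdot,\zero)$ is $\dn$-homogeneous and the unperturbed system is asymptotically stable on all of $\R^n$, I would invoke the $\Omega=\R^n$ case of Theorem \ref{thm:Zubov_Rosier_on_Omega} (equivalently the Zubov--Rosier converse Lyapunov theorem) to get a $\dn$-homogeneous Lyapunov function $V$ of degree $1$ with $k_1\|x\|_{\dn}\leq V(x)\leq k_2\|x\|_{\dn}$ and $\dot V\leq-W$, where $W$ is $\dn$-homogeneous of degree $1+\mu$ and positive away from the origin. I would then reuse, essentially verbatim, the ISS--Lyapunov computation from the proof of Theorem \ref{thm:Ryan_Hong_on_Omega}: split $\tfrac{\partial V}{\partial x}f(x,q)$ into the unperturbed part and the perturbation, rescale by $\dn(-\ln\|x\|_{\dn})$, and use uniform continuity of $f$ on the unit sphere to produce $\delta,c>0$ such that $\|\tilde\dn(-\ln\|x\|_{\dn})q\|_{\R^k}\leq\delta$ forces $\dot V\leq-\tfrac{c}{2k_2^{1+\mu}}V^{1+\mu}$. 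The crucial point is that this computation uses only the $\hat\dn$-homogeneity of $\hat f$ and continuity of $f$, not any property of the $\phi_i$, hence it is valid globally and gives a genuine ISS--Lyapunov inequality with gain $\|q/\delta\|_{\tilde\dn}\leq\|x\|_{\dn}$. By a comparison argument this bounds $V$ along every (possibly non-unique) perturbed trajectory, rules out finite-time escape, and yields $\|x(t)\|_{\dn}\leq\beta_0(\|x_0\|_{\dn},t-t_0)+\gamma_0(\|q\|_{\tilde\dn,L^{\infty}_{(t_0,t)}})$ for some $\beta_0\in\mathcal{KL}$, $\gamma_0\in\mathcal{K}$. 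Finally I would transport this to the barrier functions: continuity and $\dn$-homogeneity of degree $\nu_i>0$ give $\phi_i(x)\leq c_i\|x\|_{\dn}^{\nu_i}$ with $c_i=\sup_{\|z\|=1}\phi_i(z)<+\infty$, exactly as in \eqref{eq:phi2x}; substituting the state bound, separating the two terms via $(a+b)^{\nu_i}\leq C_{\nu_i}(a^{\nu_i}+b^{\nu_i})$, and converting $\|x_0\|_{\dn}\mapsto|x_0|$ and $\|q\|_{\tilde\dn}\mapsto\|q\|_{\R^k}$ through the sandwich bounds \eqref{eq:rel_norm_and_hom_norm_Rn} of Lemma \ref{lem:hom_norm} (applied to $\dn$ and to $\tilde\dn$) produces $\beta_i\in\mathcal{KL}$ and $\gamma_i^+\in\mathcal{K}$ with $\phi_i(x(t))\leq\beta_i(|x_0|,t-t_0)+\gamma_i^+(\|q\|_{L^{\infty}_{(t_0,t)}})$. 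Taking $\gamma_i:=\gamma_i^-+\gamma_i^+\in\mathcal{K}$ merges the two one-sided estimates into \eqref{eq:ISSS}.

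The step I expect to be the main obstacle is the upper bound, for two reasons. One must ensure the converse homogeneous Lyapunov function really is available globally, which requires forward uniqueness of the \emph{unperturbed} flow used in its Massera--Rosier construction; this holds on $\R^n\backslash\{\zero\}$ provided $f(\cdot,\zero)\in C^1(\R^n\backslash\{\zero\})$ and at the origin by the homogeneous scaling of solutions. One must also guarantee that the ISS estimate survives along the \emph{perturbed} trajectories, which need not be unique inside $\interior\Omega$ since Assumption \ref{as:4} only imposes uniqueness on $\R^n\backslash\interior\Omega$; this is resolved by running the comparison principle on the scalar quantity $V(x(t))$ rather than on the (possibly set-valued) solution map, so that the bound holds for every admissible trajectory regardless of uniqueness.
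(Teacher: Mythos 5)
Your proposal is correct and takes essentially the same route as the paper's proof: the left inequality via Theorem \ref{thm:ISSf} under condition \eqref{eq:ISSf_criterion}, the right inequality from the fact that asymptotic stability of the unperturbed homogeneous system on $\R^n$ implies ISS on $\R^n$ together with forward completeness (the $\Omega=\R^n$ case of Theorem \ref{thm:Ryan_Hong_on_Omega}), and the transfer to the barrier functions through the estimates \eqref{eq:phi2x} and \eqref{eq:rel_norm_and_hom_norm_Rn}. You merely unfold details the paper compresses into citations---the converse Lyapunov construction, the comparison argument on $V(x(t))$ handling possibly non-unique perturbed trajectories, and the merging of the two one-sided gains---all of which are consistent with the paper's argument.
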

  
		\begin{proof}
			On the one hand, under Assumptions \ref{as:1}, \ref{as:2}, \ref{as:4}, the condition \eqref{eq:ISSf_criterion} implies ISSf of the system \eqref{eq:p_hom_system} on $\Omega$ provided that the system is forward complete. 
			Under Assumption \ref{as:2}, the asymptotic stability of the unperturbed ($q=\zero$) system \eqref{eq:p_hom_system} on $\R^n$
			implies its ISS on $\R^n$
			and the forward completeness. 
			Using the estimates \eqref{eq:phi2x} and \eqref{eq:rel_norm_and_hom_norm_Rn} we complete the proof. 
		\end{proof}
		
		Similarly to ISS analysis, in the case of ``linear'' $\dn$-homogeneous cone, the ISSfS analysis of homogeneous system \eqref{eq:p_hom_system} can be reduced to an investigation of the unperturbed ($q=\zero$) system. However, in addition to asymptotic stability,  the strict positive invariance of the set $\Omega_{r}$ for the unperturbed system is required
		to guarantee ISSfS (see, the inequality  \eqref{eq:ISSf_Omega_lin_0}).

	
	\section{Homogeneous Nonovershooting Stabilizer}\label{sec:hom_nono_st}

	Let us consider the system
	\begin{equation}\label{eq:LTI_system}
		\dot x=Ax+Bu,\quad t>0,
	\end{equation}
	where the pair $A\in \R^{n\times n}$, $B\in \R^{n\times m}$ is assumed to be controllable, $x$ is the system state and $u$ is the control input.
	
	A nonovershooting linear control design is studied in the literature (see, e.g., \cite{PhillipsSeborg1988:IJC},
	\cite{ElKhoury_etal1993:Aut}, \cite{DarbhaBhattacharrya2003:TAC}) allowing the exponential stabilization of the system \eqref{eq:LTI_system}. For example,  if  a desired invariant set for the system is defined  by a linear cone 
	\[
	\tilde \Omega_{\rm lin}=\{ x\in \R^n: h_i^{\top}x\geq 0,i=1,\ldots,n \}
	\]
	with an invertible matrix $H$ given by \eqref{eq:H} then, inspired by \cite{FarinaRinaldi2000:Book}, \cite{LeenheerAeyels2001:SCL}, \cite{Rantzer2015:EJC} the following procedure can be utilized for the design of a nonovershooting linear controller
	\begin{equation}\label{eq:lin_control}
		u_{\rm lin}=Kx.
	\end{equation}
	On the one hand, the set $\tilde \Omega_{\rm lin}$ is positively invariant for the system \eqref{eq:LTI_system}, \eqref{eq:lin_control} if an only if the matrix $\tilde A=H(A+BK)H^{-1}$ is Metzler.  On the other hand,  the Metzler matrix is Hurwitz if an only if there exists a positive $\ell\in \R^n_+$ such that 
	$-\ell^{\top}(A+BK)\in \R^n_+
	$ (see, \cite{Rantzer2015:EJC}).
	Let the pair $(\ell^{\rm opt}, K^{\rm opt})$ be a solution  of the  optimization problem with the bilinear functional
	\begin{equation}\label{eq:bilinear_cost}
		J(p,K):=\max_j \ell^{\top}(A+BK)e_j\to \min
	\end{equation}
	subject to the  linear constraints
	\begin{equation}\label{eq:lin_constr}	
		\begin{array}{ccc}
			-\rho\leq e_i^{\top}H(A+BK)H^{-1}e_i, & 0< \ell_i\leq 1, & i=1,\ldots,n,\\ 
			0\leq e_i^{\top}H(A+BK)H^{-1})e_j, & j\neq i, & j=1,\ldots,n, \\
		\end{array} 
	\end{equation}
	where  $\ell=(\ell_1,\ldots,\ell_n)^{\top}\in \R^{n}_+$, $K\in \R^{m\times n}$ are optimization variables and $\rho>0$ is a tuning parameter required for boundedness of the admissible set.   
	If the optimal cost $J(\ell^{\rm opt},K^{\rm opt})$ is negative then the matrix $\tilde A$ is Metzler and Hurwitz, and 
	the system \eqref{eq:LTI_system} with the linear control \eqref{eq:lin_control} is uniformly asymptotically stable on the cone $\tilde \Omega_{\rm lin}$. In practice, the considered problem can be solved using some optimization software (e.g., YALMIP toolbox for MATLAB).
	
	A possible way to design a nonovershooting finite-time  stabilizer for the system \eqref{eq:LTI_system}  is by  upgrading (a transformation) an existing linear controller to a homogeneous one \cite{Wang_2020:RNC}. A possibility of such an upgrade for the integrator chain  is demonstrated in \cite{PolyakovKrstic2023:TAC}.  The following theorem extends this method to multi-input case.
	
	\begin{theorem}\label{thm:upgrade}
		Let a linear feedback law  \eqref{eq:lin_control} be such that the matrix $A+BK$ is Hurwitz and
		the  linear cone $\tilde \Omega_{\rm lin}$ be  positively invariant for  the closed-loop system \eqref{eq:LTI_system}, \eqref{eq:lin_control}. 
		Let the pair $\{A,B\}$ be controllable.	
		If
		the matrix $H$	given by \eqref{eq:H}
		is invertible and 
		the following linear algebraic system 
		\begin{equation}\label{eq:G0Y0}
			AG_0+BY_0=G_0A+A, \quad G_0B=\zero,  
		\end{equation}
		\begin{equation}\label{eq:Metzler}
			e^{\top}_i H\left(\tau(A+BK)-G_0\right)H^{-1} e_j\geq 0, \quad i\neq j, \quad i,j=1,\ldots, n,
		\end{equation}
		is feasible with respect to $G_0\in \R^{n\times n}$, $Y_0\in \R^{m\times n}$ and $\tau>0$
		then 
		\begin{itemize}
			\item $G_{\dn}=I_n+\mu G_0$ is anti-Hurwitz for any $\mu\in [-1,0)$;
			\item  the system of linear matrix inequalities 
			\begin{equation}\label{eq:LMI}
				P(A+BK)+(A+BK)^{\top}P\prec 0, \quad PG_{\dn}+G_{\dn}^{\top}P\succ 0, \quad P\succ 0
			\end{equation}
			is feasible with respect to $P=P^{\top}\in \R^{n\times n}$, at least, for $\mu\in [-1,0)$ sufficiently close to $0$;
			\item the feedback law
			\begin{equation}\label{eq:hom_control}
				u=K_0 x+\|x\|_{\dn}^{1+\mu}(K-K_0)\dn(-\ln \|x\|_{\dn})x, \quad K_0=Y_0(G_0-I_n)^{-1},
			\end{equation}
			is smooth  on $\R^n\backslash \{\zero\}$, continuous at $\zero$ for $\mu>-1$ and locally bounded on $\R^n$ for $\mu=-1$,
			where the canonical homogeneous norm $\|\cdot\|_{\dn}$ is induced by the norm $\|x\|=\sqrt{x^{\top}Px}$ with $P$ satisfying \eqref{eq:LMI};
			\item the closed-loop system \eqref{eq:LTI_system}, \eqref{eq:hom_control} is 
			$\dn$-homogeneous of degree $\mu\in [-1,0)$ and  finite-time stable on $\R^n$;
				\item the $\dn$-homogeneous cone 
				$
				\Omega_{\rm lin}\!=\!\{x\!\in\! \R^n\!:\! \|x\|_{\dn}h_i^{\top}\dn(-\ln \|x\|_{\dn})x\!\geq\! 0,i\!=\!1,...n\},
				$  
				is positively invariant for the closed-loop system for $\mu\in [-1,0)\cap [-\tau^{-1},0)$.
		\end{itemize}
	\end{theorem}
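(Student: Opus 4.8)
The plan is to extract from \eqref{eq:G0Y0} two structural identities and then dispatch the five assertions in turn. Writing $A_0=A+BK_0$ with $K_0=Y_0(G_0-I_n)^{-1}$ (so $Y_0=K_0(G_0-I_n)$) and $\tilde K=K-K_0$, the relation $G_0B=\zero$ gives at once $G_{\dn}B=(I_n+\mu G_0)B=B$, i.e.\ the columns of $B$ are eigenvectors of $G_{\dn}$ for the eigenvalue $1$. Substituting $AG_0=G_0A+A-BY_0$ into $A_0G_0-G_0A_0$ and using $G_0B=\zero$ together with $Y_0=K_0(G_0-I_n)$, I would verify the commutator identity $A_0G_0-G_0A_0=A_0$, equivalently $A_0G_{\dn}-G_{\dn}A_0=\mu A_0$. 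These two identities drive everything that follows.

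For the anti-Hurwitz claim I would read $A_0G_0-G_0A_0=A_0$ as a ladder relation: if $G_0v=\lambda v$ and $A_0v\neq\zero$ then $G_0(A_0v)=(\lambda-1)A_0v$, so $A_0$ lowers the $G_0$-eigenvalue by one. Since $\mathrm{range}\,B\subset\ker G_0$ (the eigenvalue $0$) and $(A_0,B)$ is controllable (state feedback preserves controllability), the reachable subspace $\mathrm{span}\{A_0^jB\}=\R^n$ lies in $\bigoplus_{j\ge0}E_{-j}$, where $E_{\lambda}$ is the $G_0$-eigenspace; hence $\mathrm{spec}(G_0)\subset\{0,-1,\dots,-(n-1)\}$ and $\mathrm{spec}(G_{\dn})=1+\mu\,\mathrm{spec}(G_0)\subset[1,\infty)$ for every $\mu<0$, so $G_{\dn}$ is anti-Hurwitz. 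For \eqref{eq:LMI} I would fix $P\succ0$ solving the strict Lyapunov inequality for the Hurwitz matrix $A+BK$ (the first, $\mu$-independent, inequality); at $\mu=0$ one has $G_{\dn}=I_n$ and the second inequality reads $2P\succ0$, so by continuity in $\mu$ the same $P$ keeps all three strict inequalities for $\mu<0$ close to $0$. The regularity of \eqref{eq:hom_control} follows from Lemma \ref{lem:hom_norm} (and the implicit function theorem applied to $x^\top\dn(-s)^\top P\dn(-s)x=1$), which makes $\|\cdot\|_{\dn}$ smooth on $\R^n\backslash\{\zero\}$ and continuous at $\zero$; the nonlinear term is bounded by $\|x\|_{\dn}^{1+\mu}\sup_{\|z\|=1}\|(K-K_0)z\|$, giving continuity at $\zero$ when $1+\mu>0$ and only local boundedness when $\mu=-1$.

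Homogeneity of degree $\mu$ of the closed loop $g(x)=A_0x+\|x\|_{\dn}^{1+\mu}B\tilde K\,\dn(-\ln\|x\|_{\dn})x$ then splits cleanly: $A_0x$ is $\dn$-homogeneous of degree $\mu$ by $A_0G_{\dn}-G_{\dn}A_0=\mu A_0$, and the second term is $\dn$-homogeneous of degree $\mu$ because $\|\cdot\|_{\dn}$ has degree $1$ while $G_{\dn}B=B$ makes $\dn(s)$ act as multiplication by $e^s$ on $\mathrm{range}\,B$. For finite-time stability I would take $V=\|x\|_{\dn}$: on the unit sphere $\|x\|=1$ the control equals $Kx$, so $g(x)=(A+BK)x$, and \eqref{eq:hom_norm_der} gives $\dot V|_{\|x\|=1}=\frac{x^\top P(A+BK)x}{x^\top PG_{\dn}x}<0$ by the two sign conditions in \eqref{eq:LMI}; since $\dot V$ is $\dn$-homogeneous of degree $1+\mu$ it is negative everywhere, yielding global asymptotic stability and, as $\mu<0$, finite-time stability by Remark \ref{rem:fts}.

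The invariance of the homogeneous cone $\Omega_{\rm lin}$ is the main obstacle. I would apply the cone criterion of Corollary \ref{cor:inv_linear_cone}: on $S$ one has $\Psi(x)=x$ and $g(x)=(A+BK)x$, the boundary pieces are $\Xi_i=\{x:\|x\|=1,\,h_i^\top x=0,\,h_j^\top x\ge0\}$, and since $h_i^\top G_{\dn}x=\mu\,h_i^\top G_0x$ on $\Xi_i$, the criterion \eqref{eq:inv_lin_cone} collapses to $h_i^\top\!\big(A+BK-\mu\tilde\gamma(x)G_0\big)x\ge0$ with $\tilde\gamma(x)=\frac{x^\top P(A+BK)x}{x^\top PG_{\dn}x}<0$. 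Here both hypotheses enter: positive invariance of the linear cone for $A+BK$ makes $H(A+BK)H^{-1}$ Metzler, hence $h_i^\top(A+BK)x\ge0$ on $\Xi_i$, while \eqref{eq:Metzler} gives $h_i^\top G_0x\le\tau\,h_i^\top(A+BK)x$. Writing $\alpha=h_i^\top(A+BK)x\ge0$, $\beta=h_i^\top G_0x\le\tau\alpha$ and $\kappa=\mu\tilde\gamma(x)>0$, the target $\alpha-\kappa\beta\ge0$ is immediate when $\beta\le0$ and otherwise reduces (via $\kappa\beta\le\kappa\tau\alpha$) to $\kappa\tau\le1$; this is exactly where the restriction $-\mu\tau\le1$ is consumed, together with a uniform bound on $\tilde\gamma$ over the compact sphere. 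The delicate point, and the step I expect to demand the most care, is propagating this pointwise sign condition uniformly over each $\Xi_i$, i.e.\ controlling the $x$-dependent ratio $\tilde\gamma(x)$ so that $\mu\tilde\gamma(x)\tau\le1$ throughout; equivalently one may verify that the matrix $M(x)$ of Corollary \ref{cor:inv_sufficient} stays Metzler on $\partial\Omega_{\rm lin}\cap S$, which after the simplification $Q^{-1}A_0G_{\dn}=A_0$ (with $Q=\mu I_n+G_{\dn}=(1+\mu)I_n+\mu G_0$) isolates the same $\tau(A+BK)-G_0$ structure appearing in \eqref{eq:Metzler}.
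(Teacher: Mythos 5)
Your route coincides with the paper's own proof at every step: the same two structural identities extracted from \eqref{eq:G0Y0} (namely $G_{\dn}B=B$, equivalently $\dn(s)B=e^sB$, and $A_0G_{\dn}=(G_{\dn}+\mu I_n)A_0$ with $A_0=A+BK_0$), feasibility of \eqref{eq:LMI} by continuity at $\mu=0$, regularity of \eqref{eq:hom_control} via Lemma \ref{lem:hom_norm}, the Lyapunov function $V=\|x\|_{\dn}$ with the observation that the closed loop reduces to $(A+BK)x$ on the unit sphere, and the invariance criterion of Corollary \ref{cor:inv_linear_cone} evaluated on the faces $\Xi_i$. The one genuine departure is the anti-Hurwitz bullet: the paper simply cites \cite{Nekhoroshikh_etal2021:CDC}, whereas your ladder argument (the commutator $A_0G_0-G_0A_0=A_0$ sends the $G_0$-eigenspace $E_\lambda$ into $E_{\lambda-1}$, $\mathrm{range}\,B\subset\ker G_0$, and controllability of $(A_0,B)$ forces $\R^n$ to be spanned by eigenvectors with eigenvalues in $\{0,-1,\dots,-(n-1)\}$) is a correct, self-contained replacement that in fact yields anti-Hurwitzness of $G_{\dn}=I_n+\mu G_0$ for every $\mu<0$, slightly more than claimed. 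A minor omission elsewhere: for $\mu=-1$ the feedback is discontinuous at $\zero$ and you should say, as the paper does, that solutions there are understood in the sense of Filippov.

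The gap is the one you flag yourself, and it is real. Your reduction on $\Xi_i$ to $\alpha-\kappa\beta\ge0$ with $\alpha=h_i^{\top}(A+BK)x\ge0$, $\beta=h_i^{\top}G_0x\le\tau\alpha$ and $\kappa=\mu\tilde\gamma(x)>0$ is correct, but closing it when $\beta>0$ requires $\kappa\tau\le1$ \emph{uniformly}, i.e. $-\mu\,\tau\,|\tilde\gamma(x)|\le1$ on each face. The hypothesis $\mu\in[-\tau^{-1},0)$ delivers only $-\mu\tau\le1$; you still need $|\tilde\gamma(x)|\le1$, and compactness of the sphere gives merely some finite $\gamma_{\max}=\sup_{\|z\|=1}|\tilde\gamma(z)|$, which would establish invariance only for the smaller range $\mu\in[-(\tau\gamma_{\max})^{-1},0)$. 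Nothing in \eqref{eq:LMI} forces $\gamma_{\max}\le1$: that bound is equivalent to the additional semidefinite condition $P(A+BK+G_{\dn})+(A+BK+G_{\dn})^{\top}P\succeq 0$, which is not imposed on $P$. For what it is worth, the paper's own proof makes exactly the same leap: it forms $M(z)=H(A+BK+\mu\tilde\gamma(z)G_0)H^{-1}$ from \eqref{eq:Mz} and asserts Metzlerity for $\mu\in[-1,0)\cap[-\tau^{-1},0)$ directly from \eqref{eq:Metzler}, without bounding $\tilde\gamma$. So your hesitation pinpoints a looseness in the published argument rather than a defect peculiar to your strategy; to make your write-up airtight, either shrink the admissible range of $\mu$ by replacing $\tau$ with $\tau\gamma_{\max}$ (harmless, since \eqref{eq:Metzler} with parameter $\tau$ implies it with any larger parameter, the off-diagonal entries of $H(A+BK)H^{-1}$ being nonnegative), or append the LMI above to \eqref{eq:LMI} so that $\tilde\gamma(z)\le1$ on $S$.
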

 
	\begin{proof}
		If the pair $\{A,B\}$ is controllable then  algebraic equation \eqref{eq:G0Y0} is always feasible with respect to $Y_0$ and $G_0$  \cite{Zimenko_etal2020:TAC} and the matrix $G_{\dn}=I_n+\mu G_0$ is anti-Hurwitz  \cite{Nekhoroshikh_etal2021:CDC} for $\mu\in [-1,1/n]$. Denoting 
		$A_0=A+BK_0$ we derive
		\[
		\eqref{eq:G0Y0} \quad \Rightarrow \quad AG_0-G_0A+BK_0(G_0-I_n)=A,\;\; G_0B=\zero \quad \Rightarrow
		\]
		\[
		(A+BK_0)G_0 =G_0A+\underbrace{G_0B}_{=\zero}K_0+A+BK_0 \quad \Rightarrow 
		\]
		\[
		\mu A_0G_0=\mu (I_n+G_0)A_0 \quad \Rightarrow \quad A_0G_{\dn}=(G_{\dn}+\mu I_n)A_0.
		\] 
		The latter means that the linear vector field $x\mapsto A_0x$ is $\dn$-homogeneous, i.e.,  $A_0\dn(s)=e^{\mu s}\dn(s)A_0, \forall s\in \R$. Moreover, the identity $G_0B=\zero$ implies that $\dn(s)B=e^{s}$.

		The  LMI \eqref{eq:LMI} is always feasible with respect to $P$, at least,
		for $\mu$ sufficiently close to zero. Indeed, 
		since the matrix $A+BK$ is Hurwitz then 
		the first (Lyapunov) inequality in \eqref{eq:LMI}  is feasible together with $P\succ 0$ (see, e.g., \cite{Boyd_etal1994:Book}).	Rewriting the second inequality for $G_{\dn}=I_n+\mu G_0$ as  follows
		$
		2P+\mu(PG_0+G_0^{\top}P)\succ 0,
		$
		we conclude its feasibility at least for $\mu$ close  zero.
		
		The smoothness  of $u$ on $\R^n\backslash\{\zero\}$ follows from the smoothness  of the canonical homogeneous norm $\|\cdot\|_{\dn}$ on $\R^n\backslash\{\zero\}$, which is induced by
		the norm $\|\cdot\|$ being smooth on $\R^n\backslash\{\zero\}$. Since $\|\dn(-\ln \|x\|_{\dn})x\|=1$  and $\|\cdot\|_{\dn}\in C(\R^n)$ then $u$ is continuous at $\zero$ for $\mu>-1$ and locally bounded (but discontinuous at zero) if $\mu=-1$. In the latter case, solutions of the system at $\zero$ are understood in the sense of Filippov \cite{Filippov1988:Book}.

		The  norm  $\|\cdot\|_{\dn}$ is a Lyapunov function of the closed-loop system  \cite{Zimenko_etal2020:TAC}:
		\[
		\frac{d}{dt}\|x\|_{\dn}=\|x\|^{\mu+1}_{\dn}\tfrac{x^{\top}\dn^{\top}(-\ln\|x\|_{\dn})P(A+BK)\dn(-\ln \|x\|_{\dn})x}{x^{\top}\dn^{\top}(-\ln\|x\|_{\dn})PG_{\dn}\dn(-\ln \|x\|_{\dn})x}\leq -\gamma_{\min} \|x\|_{\dn}^{1+\mu},\quad x\neq \zero,
		\]
		where the formula \eqref{eq:hom_norm_der} and the identities  $\dn(s)B=e^{s}$, $A_0\dn(s)=e^{\mu s}\dn(s)A_0$ are utilized to  derive the equation and  $0<\gamma_{\min}=\inf_{\|z\|=1}\tilde \gamma(z)$ for 
		$
		\tilde \gamma(z):=\tfrac{-z^{\top}\!P(A+BK)z}{z^{\top}\!PG_{\dn}z}>0, \quad \forall z\in \R^{n}\backslash\{\zero\}.$
		Since $\mu<0$ then the closed-loop system is globally  finite-time stable on $\R^n$.
		By Corollary \ref{cor:inv_linear_cone} , the set $\Omega_{\rm lin}$ is positively invariant if the matrix 
		\begin{equation}\label{eq:Mz}
			M(z)=H(A+BK+\mu\tilde \gamma(z)G_{0})H^{-1}
		\end{equation}
		is Metzler.  By assumption, the set $\tilde \Omega_{lin}$ is the positively invariant set of the linear system \eqref{eq:LTI_system}
		with the linear control \eqref{eq:lin_control}. The latter means that the matrix $H(A+BK)H^{-1}$ is necessarily Metzler.
		If the condition \eqref{eq:Metzler} is fulfilled then the matrix 
		$H \left(A+BK+\mu \gamma(z)G_{0}\right)H^{-1}$ is Metzler  provided that $\mu\in [-1,0)\cap [-\tau^{-1},0)$.
	\end{proof}

		The case $\mu>0$ can be treated by replacing the sign ``$-$" with ``$+$" in  \eqref{eq:Metzler}.

	The identity $(A+BK_0)G_{\dn}=(G_{\dn}+\mu I_n)(A+BK_0)$ is necessary and sufficient \cite{Zimenko_etal2020:TAC} for the linear vector field  $x\mapsto (A+BK_0)$ to be $\dn$-homogeneous of degree $\mu$. Therefore, the linear term in the control law \eqref{eq:hom_control} "homogenize"  the linear plant with respect to the dilation $\dn$. The later is required for $\dn$-homogeneous stabilization of the LTI system. The second inequality in  \eqref{eq:LMI} guarantees monotonicity of the dilation $\dn$ with respect to the norm $\|x\|=\sqrt{x^{\top}Px}$, which is needed for the existence of the canonical homogeneous norm $\|\cdot\|_{\dn}$ (see,  the control law \eqref{eq:hom_control}). The cone $\Omega_{\rm lin}$ is linear in the space $\R^n_{\dn}$ homeomorphic to $\R^n$ and the closed-loop system in $\R^n_{\dn}$ is similar to linear as well (see the formulas \eqref{eq:z_system_g} and \eqref{eq:y_system_g}).
	So,  the invariance of $\Omega_{\rm lin}$ was deduced from positivity of the system in the new coordinates. To be positive, a linear system must have a Metzler matrix \cite{FarinaRinaldi2000:Book}. The condition \eqref{eq:Metzler} just follows this  criterion.   If the matrix  $H(A+BK)H^{-1}$ is Metzler then the condition \eqref{eq:Metzler} simply means that \textit{for any negative  off-diagonal element of the matrix $H(-G_0)H^{-1}$ the corresponding off-diagonal element of the  matrix $H(A+BK)H^{-1}$ has to be positive}. 
	
	In practice, frequently,  just some coordinates of the system state are constrained. For example, in \cite{PolyakovKrstic2023:TAC}  the safe set is given by
	\[
	\Sigma=\{x\in \R^n: \tilde h^{\top}_1x\geq 0\},
	\]
	while the restriction of $\Sigma$ to the invariant sets $\Omega_{\rm lin}$ or $\tilde \Omega_{\rm lim}$ was caused by 
	the design methodology and the structure of the plant model (the integrator chain). In this case,  it is important to know if 
		a) some linear constraint  $h_1^{\top}x\geq 0$ can be preserved in the $\dn$-homogeneous invariant cone $\Omega_{\rm lin}$;
		b) the set $\Omega_{\rm lin}$ is larger or smaller than $\tilde \Omega_{\rm lin}$. 
	The following remarks answer the above questions. 
	\begin{remark}
		If $h_1\in \R^n$ is a real left eigenvector of the generator $G_{\dn}$ then
		\begin{equation}\label{eq:equiv_hom_lin_cone}
			\|x\|_{\dn}h_1^{\top}\dn(-\ln \|x\|_{\dn})x\geq 0 \quad \Leftrightarrow \quad h_1^{\top}x\geq 0.
		\end{equation}
	\end{remark} 
	\begin{proof}
		Indeed, since $\dn(s)=\sum_{i=0}^{\infty} \frac{s^iG_{\dn}^i}{i!}$ and $h_1^{\top}G_{\dn}=\lambda h_1^{\top}$, then
		$h_1^{\top}\dn(s)=e^{\lambda s}h_1^{\top}$ and 
		\[
		\|x\|_{\dn}h_1^{\top}\dn(-\ln \|x\|_{\dn})x=\|x\|^{1-\lambda}_{\dn}h_1^{\top}x. 
		\]
		Hence, \eqref{eq:equiv_hom_lin_cone} holds for $x\neq \zero$. For  $x=\zero$ the equivalence is obvious, since $\|\cdot\|_{\dn}\in C(\R^n)$, $\|\zero\|_{\dn}=0$ and $\|\dn(-\ln \|x\|_{\dn})x\|=1$ for all $x\neq \zero$.
	\end{proof}
	\begin{remark}
		Under conditions of Theorem \ref{thm:upgrade},  if the matrix $H(-G_0)H^{-1}$ is Metzler then the following inclusion 
		\begin{equation}\label{eq:local_emedding_Omega_lin}
			(\tilde \Omega_{\rm lin}\cap B)\subset (\Omega_{\rm lin} \cap B)		
		\end{equation} 
		holds, where $B=\{ z\in \R^n: \|z\|\leq 1\}$ is the unit ball. 
	\end{remark}
	\begin{proof}
		Since 
		$
		e^{s}H\dn(-s)H^{-1}\!=\!e^{s}He^{-s(I_n+\mu G_0)}H^{-1}=He^{-s\mu G_0}H^{-1}=
		e^{-s\mu HG_0H^{-1}}
		$
		then for the Metzler matrix $H(-G_0)H^{-1}$ and for $\mu s>0$ the matrix
		$ 
		e^{s}H\dn(-s)H^{-1}\geq 0,
		$ 
		where the sign "$\geq 0$" means that all elements of the matrix are non-negative.
		On the other hand, 
		$ 
		x\in \tilde \Omega_{\rm lin}  \Rightarrow Hx\geq 0,
		$ 
		and 
		$ 
		x\in \Omega_{\rm lin}  \Rightarrow (e^{s}H\dn(-s)H^{-1})Hx\geq 0  \text{ with }  s=\ln \|x\|_{\dn}.
	$ 
		If $\|x\|\leq 1$ then $s<0$, $\mu s\geq 0$ (due to $\mu\leq 0$) and $e^{s}H\dn(-s)H^{-1}\geq 0$. Hence,
	$ 
		x\in \tilde \Omega_{\rm lin}\cap B  \quad \Leftrightarrow \quad \|x\|\leq 1 \text{ and }  Hx\geq 0\; \Rightarrow \;
		(e^{s}H\dn(-s)H^{-1}) Hx\geq \zero.
	$ 
	\end{proof}
	
	The latter remark  results in the less conservative overlapping of the original safe set $\Sigma$ 
	by the homogeneous cone $\Omega_{\rm lin}$, at least,
	close to the origin (in the unit ball).  This property is important for the safety filter design \cite{Abel_etal2022:ACC}, \cite{PolyakovKrstic2023:TAC}. To avoid possible conservatism away from the unit ball, the upgrade of 
	linear nonovershooting stabilizer may be realized locally. Namely, if, under condition of Theorem \ref{thm:upgrade}, the control $u$ is defined as follows
	\begin{equation}\label{eq:mix_control}
		u=\left\{
		\begin{array}{ccc}
			Kx & \text{ if } & \|x\|\geq 1\\
			K_0x+\|x\|_{\dn}^{\mu+1}(K-K_0)\dn(-\ln \|x\|_{\dn})x & \text{ if } & \|x\|<1,
		\end{array}
		\right.
	\end{equation}  
	then $u$ is Lipschitz  continuous away from the origin and 
	the system \eqref{eq:LTI_system} with the control \eqref{eq:mix_control} is finite-time stable on the set
	\begin{equation}
		\Omega=(\Omega_{\rm lin}\cap B)\cup (\tilde \Omega_{\rm lin}\backslash B).
	\end{equation}
	
	Finally, in many cases, the linear model  \eqref{eq:LTI_system} is an approximation of a nonlinear control system under assumption that some (probably uncertain time-varying) parameters are small enough to be neglected. Let a more precise model of a control system have the form \eqref{eq:p_hom_system} with $f(x,q)$ such that
	$
	f(x,\zero)=Ax+Bu(x),
	$
	where  $q$ defines uncertainties and disturbances of the original nonlinear system 
	and $u$ is the nonovershooting  feedback \eqref{eq:hom_control}.
	If the vectors  $h_i\in \R^n,i=1,\ldots,n$ are linearly independent then $\Omega_{\rm lin}$ satisfies Assumption \ref{as:1} and the robustness (ISS, ISSf and ISSfS) of the nonovershooting property for the original nonlinear system follows from the results presented in Section  \ref{sec:hom_ISS_on_omega}.

	\begin{corollary}\label{cor:ISSf_cont_system}
	 If $\mu>-1$ then, under conditions of Theorem \ref{thm:upgrade}, the closed-loop system 
	 \eqref{eq:LTI_system}, \eqref{eq:hom_control} is ISSf and ISSfS with respect to measurement noises and additive perturbations provided that the matrix $M$ given by 
	 \eqref{eq:Mz} satisfies the condition \eqref{eq:ISSf_Omega_lin_M(x)}. The condition \eqref{eq:ISSf_Omega_lin_M(x)} is always fulfilled for $\mu$ being sufficiently close to zero.
	\end{corollary}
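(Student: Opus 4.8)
The plan is to recognize the perturbed closed-loop system as an instance of the ``linear'' $\dn$-homogeneous cone setting of Corollary~\ref{cor:ISSf_on_Omega_lin} and Theorem~\ref{thm:ISSfS}, and then to reuse the algebra already carried out in the proof of Theorem~\ref{thm:upgrade}. First I would record the hypotheses handed to us for free. By Theorem~\ref{thm:upgrade}, for $\mu\in[-1,0)\cap[-\tau^{-1},0)$ the unperturbed vector field $f(\cdot,\zero)=Ax+Bu(x)$ with $u$ from \eqref{eq:hom_control} is $\dn$-homogeneous of degree $\mu$, smooth on $\R^n\backslash\{\zero\}$ and continuous at $\zero$ (since $\mu>-1$), finite-time and hence asymptotically stable on all of $\R^n$, and leaves $\Omega_{\rm lin}$ positively invariant. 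The dilation $\dn$ is strictly monotone with respect to $\|x\|=\sqrt{x^\top Px}$ by the second inequality in \eqref{eq:LMI} together with Corollary~\ref{cor:monotonicity}; the barrier functions $\phi_i(x)=\|x\|_{\dn}h_i^\top\dn(-\ln\|x\|_{\dn})x$ of the linear cone are $\dn$-homogeneous of degree $1$, continuous and $C^1$ away from the origin, so Assumption~\ref{as:1} holds with $p=n$ and $\nu_i=1$. Encoding the measurement noise and additive disturbance through a dilation $\tilde\dn$ on the input space that renders $\hat f$ homogeneous of degree $\mu>-1$ supplies Assumption~\ref{as:2}, and $f(\cdot,\zero)\in C^1(\R^n\backslash\{\zero\})$ while $H$ and $\mu I_n+G_{\dn}$ are invertible.

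The key observation for ISSf is that the matrix $M(z)$ appearing in Corollary~\ref{cor:ISSf_on_Omega_lin}, evaluated for $g=f(\cdot,\zero)$, is \emph{identical} to the matrix \eqref{eq:Mz}. This is not a new computation: it is exactly the reduction performed in the proof of Theorem~\ref{thm:upgrade}, where Euler's identity (Theorem~\ref{thm:Euler}) together with $A_0\dn(s)=e^{\mu s}\dn(s)A_0$ and $\dn(s)B=e^{s}B$ collapse $H\big(\tfrac{x^\top Pf(x,\zero)}{x^\top PG_{\dn}x}(I_n-G_{\dn})+(\mu I_n+G_{\dn})^{-1}\tfrac{\partial f(x,\zero)}{\partial x}G_{\dn}\big)H^{-1}$ into $H(A+BK+\mu\tilde\gamma(z)G_0)H^{-1}$. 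The positive-invariance part of Theorem~\ref{thm:upgrade} already shows this matrix is Metzler on the unit sphere for the admissible $\mu$, so the Metzler hypothesis of Corollary~\ref{cor:ISSf_on_Omega_lin} is met automatically. Feeding in the assumed inequality \eqref{eq:ISSf_Omega_lin_M(x)} then yields \eqref{eq:ISSf_Omega_lin_0}, hence \eqref{eq:ISSf_criterion}, and Corollary~\ref{cor:ISSf_on_Omega_lin} delivers ISSf, with Assumption~\ref{as:4} not required since the strict positive invariance of $\Omega_r$ is robust to possible non-uniqueness of solutions.

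For ISSfS I would invoke Theorem~\ref{thm:ISSfS}: the unperturbed system is asymptotically stable on $\R^n$ by Theorem~\ref{thm:upgrade} and \eqref{eq:ISSf_criterion} holds by the previous step, so the lower bound in \eqref{eq:ISSS} comes from the ISSf estimate and the upper bound from the homogeneity estimate \eqref{eq:phi2x} combined with \eqref{eq:rel_norm_and_hom_norm_Rn}. Finally, for the claim that \eqref{eq:ISSf_Omega_lin_M(x)} holds for $\mu$ near $0$, I would run a continuity argument: since $\tilde\gamma$ is bounded on the sphere and $G_{\dn}=I_n+\mu G_0$, the matrix $M(z)=H(A+BK+\mu\tilde\gamma(z)G_0)H^{-1}$ converges, uniformly in the direction $z/\|z\|$, to $\tilde A:=H(A+BK)H^{-1}$ as $\mu\to0^-$. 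Because $\tilde A$ is Metzler and Hurwitz, the standard M-matrix characterization furnishes a strictly positive $r$ with $\tilde Ar<0$ componentwise, i.e.\ $-e_i^\top\tilde Ar>0$; uniform convergence then preserves the strict inequality $-e_i^\top M(z)r>0$ on $\Xi_i(r)$ for all $\mu$ sufficiently close to $0$.

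I expect the last paragraph to contain the only genuinely delicate point. The difficulty is that $\Xi_i(r)$ is in general unbounded, so one must justify uniformity of the convergence $M(z)\to\tilde A$ over it; this is resolved by observing that $\tilde\gamma$, and hence $M(z)$, depends on $z$ only through the direction $z/\|z\|$, which reduces the estimate to the compact unit sphere. The secondary subtlety is selecting a \emph{common} $r$ that works simultaneously for all $i$, which is precisely what the strictly positive M-matrix vector provides.
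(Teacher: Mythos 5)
Your proposal is correct and follows essentially the same route as the paper: verifying Assumption~\ref{as:2} for $q=(q_1^{\top},q_2^{\top})^{\top}$ via a product dilation (the paper makes yours explicit as $\tilde\dn(s)=\left(\begin{smallmatrix}\dn(s) & \zero\\ \zero & e^{\mu s}\dn(s)\end{smallmatrix}\right)$ on $\R^{2n}$), obtaining ISSf from Corollary~\ref{cor:ISSf_on_Omega_lin} with the Metzler property of \eqref{eq:Mz} inherited from the proof of Theorem~\ref{thm:upgrade}, getting ISSfS from Theorem~\ref{thm:ISSfS} via global asymptotic stability of the unperturbed system, and settling the small-$|\mu|$ claim through the Metzler--Hurwitz characterization giving a strictly positive $r$ with $H(A+BK)H^{-1}r$ strictly negative. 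Your closing paragraph on the uniformity of $M(z)\to H(A+BK)H^{-1}$ over the unbounded sets $\Xi_i(r)$ (reduced to the compact sphere since $M$ depends only on $z/\|z\|$) is a detail the paper leaves implicit, so it is a welcome addition rather than a deviation.
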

 
	\begin{proof}
	Let us consider the system
$ 
	\dot x=f(x,q):=Ax+Bu(x+q_1)+q_2,
	$ 
	where $A,B,x$ and $u$ are as in Theorem \ref{thm:upgrade}, $q_1,q_2\in L^{\infty}(\R,\R^n)$ and $q=(q_1^{\top},q_2^{\top})^{\top}\in L^{\infty}(\R,\R^{2n})$. 
	For $\mu>-1$ the vector field $f$ satisfies Assumption \ref{as:2} with the dilation $\tilde \dn(s)=\left(
	\begin{smallmatrix}
	\dn(s) & \zero\\
	\zero & e^{\mu s}\dn(s)
	\end{smallmatrix}\right)
	$
	in $\R^{2n}$. In the proof of Theorem \ref{thm:upgrade} it is shown that the matrix $M$ is Metzler.
	By assumption, the condition \eqref{eq:ISSf_Omega_lin_M(x)} holds then, by Corollary 
	\ref{cor:ISSf_on_Omega_lin}, the closed-loop system 
	 \eqref{eq:LTI_system}, \eqref{eq:hom_control} is ISSf on $\Omega_{\rm lin}$.
	 The global asymptotic stability of the unperturbed system implies ISSfS in the view
	 of Theorem \ref{thm:ISSfS}. Moreover, since $H(A+BK)H^{-1}$ is Metzler and Hurwitz
	 then there exists a strictly positive vector $r\in \R^n_+$ such that the vector  $H(A+BK)H^{-1}r$ is strictly negative (see, e.g., \cite{Rantzer2015:EJC} for more details). Hence, \eqref{eq:ISSf_Omega_lin_M(x)} is always fulfilled if $\mu$ is close to zero. 
	\end{proof}
	
	It is worth stressing that neither linear nor homogeneous non-overshooting stabilizer design uses any decomposition of the system to a canonical form (such as the integrator chain in the papers \cite{Abel_etal2022:ACC}, \cite{PolyakovKrstic2023:TAC}). So, the control design does not	require any information about relative degrees of the barrier functions $\phi_i$.


	\section{Numerical Example}
	
	Let us consider the system \eqref{eq:LTI_system}, \eqref{eq:lin_control} with  the parameters:
	$ 
		A\!=\!\left(\begin{smallmatrix}
			3 &0 &1\\
			0 &-1 &1\\
			-2& 0& 0
		\end{smallmatrix}
		\right)\!, B\!=\!\left(\begin{smallmatrix}
			1 & -1\\
			0 &1\\
			0 &1\\
		\end{smallmatrix}
		\right)\!.
	$ 
	First, let us design a linear feedback $u_{\rm lin}=Kx$ which  stabilizes asymptotically the system preserving the relations
	\begin{equation}\label{eq:restriction}
		-x_1(t)\leq x_3(t)\leq x_2(t), \quad \forall t\geq 0.
	\end{equation} 
	These constraints mean that  the linear  cone 
	\[
	\Sigma=\{x\in \R^3: \,h_1^{\top}x\geq 0, \,h_2^{\top}x\geq 0\},
	\quad 
	h_1=\left(\begin{smallmatrix}
		1 \\
		0 \\
		1
	\end{smallmatrix}
	\right),\quad  h_2=\left(\begin{smallmatrix}
		0\\
		1\\
		-1
	\end{smallmatrix}
	\right)
	\]
	must be positively invariant for the closed-loop system.
	Selecting the additional (virtual) constraint by assigning
$ 
	h_3=\left(\begin{smallmatrix}
		0  \\ 0 \\  -1
	\end{smallmatrix}
	\right).
$ 
	we derive the invertible matrix $H$ given by \eqref{eq:H}. 
	Solving the optimization problem \eqref{eq:bilinear_cost}, \eqref{eq:lin_constr} for $\rho=4$ we obtain the gain of the linear feedback 
	$
	K=
	\left(
	\begin{smallmatrix}
		-4.7536  &  0  & -4.9393\\
		1.7415  & 0 &  -3.7856\\
	\end{smallmatrix}
	\right),
	$
	which stabilizes the system asymptotically on the linear cone $\tilde \Omega_{\rm lin}\subset\Sigma$. 
	
	To stabilize the system in a finite time to zero  we upgrade the linear feedback to a homogeneous one using Theorem \ref{thm:upgrade}. Solving the equation \eqref{eq:G0Y0} we derive 
$ 
	G_{0}\!=\!\left(\begin{smallmatrix}
		0  & -0.5 &  0.5\\
		0 &   -0.5 &  0.5\\
		0 &  0.5 &  -0.5
	\end{smallmatrix}
	\right) \text{ and } K_0\!=Y_0(G_0-I_3)^{-1}=\!\left(\begin{smallmatrix}   
		-1  & 0 &  -1\\
		1   & 0.5 &  -0.5
	\end{smallmatrix}
	\right)\!.
$ 
	Since the matrices 
	\begin{equation}\label{eq:temp2}
	H(A+BK)H^{-1}\!=\!\left(\begin{smallmatrix}
		-3.7536  &  0   & 0.1857\\
		2 &  -1  &  2\\
		0.2585  &  0  & -3.5271
	\end{smallmatrix}
	\right)\!\!,\;\;
	H(-G_0)H^{-1}\!=\!\left(\begin{smallmatrix}
		0 &  0 &  0\\
		0   & 1  &  0\\
		0 & 0.5 &   0\end{smallmatrix}
	\right) 
	\end{equation} 
	are Metzler then the condition \eqref{eq:Metzler} is fulfilled for any $\tau>0$. This means that any  homogeneity degree  $\mu\in [-1,0)$  can be selected for the upgrade.
	Solving the LMI \eqref{eq:LMI} for $\mu=-0.75$ we derive 
	$
	P=\left(\begin{smallmatrix}
		0.8707   & 0.2572 &  -0.1918\\
		0.2572   & 1.0229   &-0.3984\\
		-0.1918 &  -0.3984  &  0.9301
	\end{smallmatrix}
	\right)
	$
	and define the  homogeneous controller  \eqref{eq:hom_control} with $G_{\dn}=I_3+\mu G_0$, which stabilizes the system in a finite time on the $\dn$-homogeneous cone $\Omega_{\rm lin}$. Since $h_1$ and $h_2$ are left eigenvectors of $G_{\dn}$ then $\Omega_{\rm lin} \subset\Sigma$.
	
	The simulation results for the nonovershooting linear and homogeneous control (with $\mu=-0.75$) are given in Figure \ref{fig:non_pert}, where the behavior of the barrier functions $\phi_1(x)=h_1^{\top}x$
	and $\phi_2(x)=h_2^{\top}x$ is depicted as well.  
	The linear control stabilizes the system exponentially and  the homogeneous control stabilizes it in a finite time ($\approx 3$ for $x(0)=(0.5,1,0)^{\top}$). 
	The safety constraint $x(t)\in \Sigma, \forall t\geq 0$ is fulfilled for both stabilizers, while the virtual constraint $x_3(t)\leq 0,\forall t\geq 0$ is violated by the homogeneous stabilizer to guarantee finite-time convergence. This highlights the less conservative overlapping of the set $\Sigma$ by the $\dn$-homogeneous cone $\Omega_{\rm lin}$ (at least, close to the origin) comparing with the linear cone. 
	
	\begin{figure}[t]
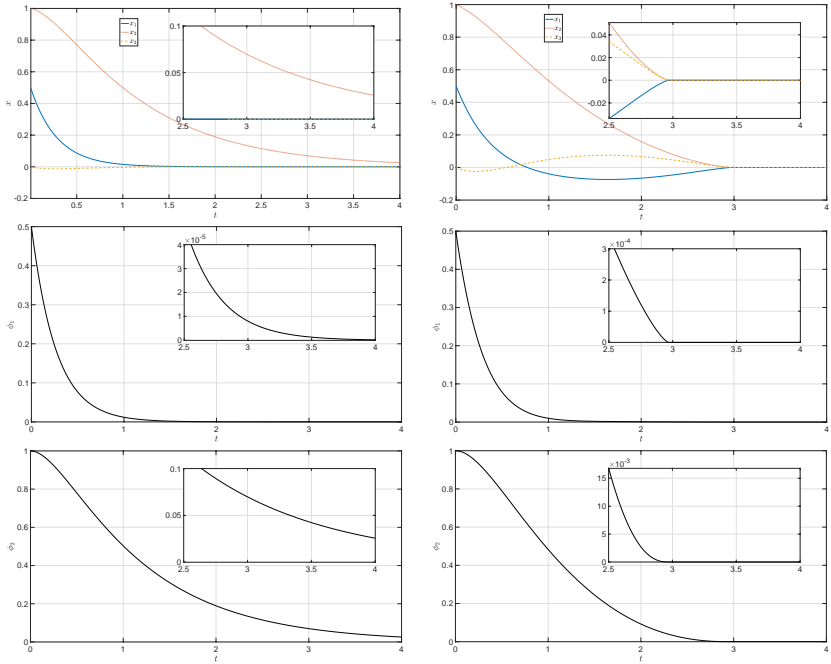

		\centering
		\includegraphics[width=53mm]{x_l.pdf}\quad \includegraphics[width=53mm]{x_h.pdf}\\
		\includegraphics[width=53mm]{phi1_l.pdf}\quad \includegraphics[width=53mm]{phi1_h.pdf}\\
		\includegraphics[width=53mm]{phi2_l.pdf}\quad \includegraphics[width=53mm]{phi2_h.pdf}\\
			\vspace{-1mm}
		\caption{Trajectories of the nominal system \eqref{eq:LTI_system} with linear (left) and homogeneous (right) nonovershooting stabilizers }
		\label{fig:non_pert}
		\vspace{-1mm}
	\end{figure} 
	
	\subsection{ISS  on $\Sigma$}
	Let us consider the perturbed system
	\begin{equation}\label{eq:f(x,q)_example}
		\dot x=f(x,q):=Ax+Bu(x)+D|qx_1|^{\nu}, \quad D=\left(
		\begin{smallmatrix}
			1\\	0\\0
		\end{smallmatrix} \right) \quad 0<\nu<1+\mu, 
	\end{equation}
	where $\mu>-1$, $x=(x_1,x_2,x_3)^{\top}$, $A$, $B$ are as before,  $u$ is either a linear or a homogeneous nonovershooting stabilizer and  $q\in L^{\infty}(\R,\R)$ is an uncertain parameter. 
	Since 
	$\dn(s)D=e^s\sum_{i=0}^{\infty} \tfrac{s^i\mu^i G_0^i}{i!}D=e^{s}D$
	then
	taking $\tilde \dn(s) =e^{\left(\frac{1+\mu}{\nu}-1\right)s}$,  for $u$ given by \eqref{eq:hom_control}, we derive 
	$
	f(\dn(s)x,\tilde q)=e^{\mu s}\dn(s)f(x,q), \forall s\in \R,x\in\R^n,q\in \R.
	$
	The group $\tilde \dn(s)$ is a dilation in $\R$ if $0<\nu<1+\mu$.
	Hence, taking into account global asymptotic stability of the system \eqref{eq:f(x,q)_example} for $q=\zero$ we conclude ISS with respect to $q$ on $\R^n$. To prove invariance of $\Sigma$ in the perturbed case  we check the condition \eqref{eq:p_hom_inv_lin} for $p=2$. Since 
	$
	h_1^{\top}G_{\dn}x=h_1 x, \quad h_2^{\top}G_{\dn}x=(1-\mu)h_2^{\top}x
	$
	then \eqref{eq:p_hom_inv_lin} is fulfilled for $p=2$ and the closed-loop system is ISS on $\Sigma$ provided that Assumption \ref{as:3} is fulfilled. 
	
	The simulations results for the perturbed case ($q=\sin(5t),\nu=1/8$) are shown on Figure \ref{fig:pert}.
	The homogeneous stabilizer demonstrates a better suppression of perturbations (at least for the selected initial condition  and the selected perturbation).  
	
	\begin{figure}[t]
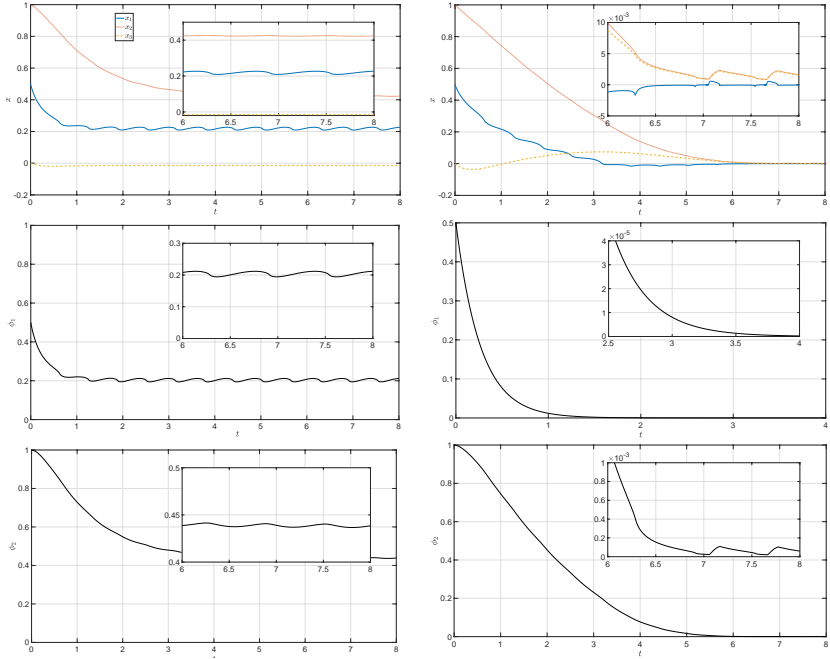

		\centering
		\includegraphics[width=53mm]{x_l_pert.pdf}\quad 
		\includegraphics[width=53mm]{x_h_pert.pdf}\\
		\includegraphics[width=53mm]{phi1_l_pert.pdf}\quad \includegraphics[width=53mm]{phi1_h_pert.pdf}\\
		\includegraphics[width=53mm]{phi2_l_pert.pdf}\quad \includegraphics[width=53mm]{phi2_h_pert.pdf}\\
			\vspace{-1mm}
		\caption{Trajectories of the perturbed system \eqref{eq:f(x,q)_example} with linear (left) and homogeneous (right) nonovershooting stabilizers}
		\label{fig:pert}
		\vspace{-1mm}
	\end{figure}

		\subsection{ISSf on $\Omega$}
			Let us consider the perturbed system
		\begin{equation}\label{eq:f(x,q)_example2}
			\dot x=f(x,q):=Ax+Bu(x+q_1)+q_2,  
		\end{equation}
		where $\mu>-1$, $x=(x_1,x_2,x_3)^{\top}$, $A$, $B$ are as before,  $u$ is either a linear or a homogeneous nonovershooting stabilizer designed above and  $q=(q_1^{\top},q_2^{\top})^{\top}\in L^{\infty}(\R,\R^6)$ defines both measurement noise and additive perturbation. 
		
		 Using the representation \eqref{eq:temp2} we conclude that  
		 $-H(A+BK)H^{-1}r\in \R^n_+$ for $r=(0.2,1,0.2)^{\top}$. So, in the view of Corollary 
		 \ref{cor:ISSf_cont_system}, the perturbed system \eqref{eq:f(x,q)_example2} is ISSf and ISSfS, at least, for $\mu$ being sufficiently close to zero. The results of numerical simulations for the non-overshooting linear and homogeneous stabilizers designed above are given on Figure \ref{fig:ISSf}.
		 The measurement noise $q_1$ has been simulated as a uniformly distributed random variable with the magnitude $0.01$. 
		 The exogenous perturbation has been defined as follows $q_2(t)=\left(1,1,1 \right)^{\top}\sin(5t)$.
		 
		 The simulation results demonstrate faster convergence, better robustness and smaller overshoots of the homogeneous control system comparing with the linear one.
	\begin{figure}[t]
		\centering
		\includegraphics[width=53mm]{x_l_ISSf.pdf}\quad \includegraphics[width=55mm]{x_h_ISSf.pdf}\\
		\includegraphics[width=53mm]{phi1_l_ISSf.pdf}\quad \includegraphics[width=53mm]{phi1_h_ISSf.pdf}\\
		\includegraphics[width=53mm]{phi2_l_ISSf.pdf}\quad \includegraphics[width=53mm]{phi2_h_ISSf.pdf}\\
			\vspace{-1mm}
		\caption{Trajectories of the pertubed system \eqref{eq:f(x,q)_example2} with linear (left) and homogeneous (right) nonovershooting stabilizers}
		\label{fig:ISSf}
		\vspace{-3mm}
	\end{figure}

	\section{Conclusions}
	In the paper, a scheme for nonovershooting finite-time stabilizer design for linear multi-input system is presented. The procedure is based on an upgrade of a linear nonovershooting stabilizer to  a  homogeneous one with negative degree. Robustness of the safety and finite-time stability  properties is analyzed using the concept of Input-to-State Safety and Input-to-State Stability. For this purpose, some known results about ISS analysis of homogeneous systems in $\R^n$ are expanded to homogeneous systems on homogeneous cones.  The presented example illustrates the simplicity of the proposed scheme of control design and robustness analysis. 

\bibliographystyle{plain}

\end{document}